\DeclareMathAlphabet{\mathpzc}{OT1}{pzc}{m}{it}
\numberwithin{equation}{section}
\theoremstyle{plain}
\newtheorem{corollary}{Corollary}
\newtheorem{keyLemma}{Key-Lemma}
\newtheorem{mainThrm}{Main Theorem}
\theoremstyle{definition}
\newtheorem{definition}{Definition}
\newtheorem{remark}{Remark}
\newtheorem{crem}{Concluding Remark}
\definecolor{wineRed}{rgb}{0.7,0,0.3}
\definecolor{grandBleu}{rgb}{0,0,0.8}
\definecolor{darkGreen}{rgb}{0,0.4,0}
\definecolor{blueViolet}{rgb}{0.4,0,1.0}
\definecolor{bloodOrange}{rgb}{0.85,0.05,0}
\def\laplace{\mathit{\Delta}}
\def\bbN{{\mathbb{N}}}
\def\bbR{{\mathbb{R}}}
\def\scrH{{\mathscr{H}}}
\def\scrV{{\mathscr{V}}}
\def\laplace{\mathit{\Delta}}
\def\ds{\displaystyle}
\title[A CLASS OF PARABOLIC SYSTEMS IN OPTIMAL CONTROL OF GRAIN BOUNDARY MOTIONS]{A CLASS OF PARABOLIC SYSTEMS ASSOCIATED \\ WITH OPTIMAL CONTROLS \\ OF GRAIN BOUNDARY MOTIONS}
\author{Harbir Antil}
\address{Department of Mathematical Sciences, George Mason University, Fairfax, VA 22030, USA.}
\email{hantil@gmu.edu}
\author{Ken Shirakawa}
\address{Department of Mathematics, Faculty of Education, Chiba University 1--33 Yayoi-cho, Inage-ku, 263--8522, Chiba, Japan}
\email{sirakawa@faculty.chiba-u.jp}
\author{Noriaki Yamazaki}
\address{Department of Mathematics, Faculty of Engineering, Kanagawa University 3--27--1 Rokkakubashi, Kanagawa-ku,Yokohama, 221--8686, Japan}
\email{sirakawa@faculty.chiba-u.jp}
\thanks{This work is supported by  NSF grants DMS-1818772, DMS-1521590, and Grant-in-Aid for Scientific Research (C) No. 16K05224, JSPS}
\keywords{Keywords: grain boundary motion, optimal control problem, parabolic variational inequality, G\^{a}teaux derivative of cost, adjoint system}
\subjclass[2010]{
35K87, 
35K51, 
49J20. 
}
\begin{document}
\label{page:t}

\begin{abstract}
We propose a semi-discrete numerical scheme and establish well-posedness of a class of parabolic systems. Such systems naturally arise while studying the optimal control of grain boundary motions. The latter is typically described using a set of parabolic variational inequalities. We use a regularization approach to deal with the variational inequality. The resulting optimization problem is a nonsmooth, nonconvex, and nonlinear programming problem. This is a long term project where in the current work we are first analyzing systems of PDEs associated with the regularized optimal control problem. Such a system is a set of highly coupled parabolic equations, and proposes significant analytical and numerical challenges. We establish well-posedness of this system. In addition, we design a provably convergent semi-discrete (time discrete spatially continuous) numerical scheme to solve the system. We have developed several new tools during the course of this paper that can be applied to a wider class of coupled systems.
\end{abstract}

\maketitle

\section*{Introduction}\label{Intro}

Let $ (0, T) \subset \bbR$ be a bounded time-interval with a finite constant $ T > 0 $. Let  $ \Omega \subset \bbR^N $ be a bounded spatial domain with a dimension $ N \in \{1, 2, 3\} $. We denote by $ \Gamma $ the boundary $ \partial \Omega $ of $ \Omega $, and when $ N > 1 $, we suppose the Lipschitz regularity for $ \Gamma $. We denote by $ n_\Gamma \in \mathbb{S}^{N -1} $ the unit outer normal on $ \Gamma $, and set $ Q := (0, T) \times \Omega $ and $ \Sigma := (0, T) \times \Gamma $. Besides, we define $ H := L^2(\Omega) $ and  $ \scrH := L^2(0, T; L^2(\Omega)) $  as the base spaces for our problems. 
\medskip

In this paper, we consider a class of systems of parabolic initial-boundary value problems. Each constituent system is denoted by (\hyperlink{P}{P}), and formulated as follows. 
\begin{align*}
    \mbox{(\hypertarget{P}{P}):} &  
\nonumber
\\
& \parbox{12cm}{
$ \begin{cases}
    \partial_t p -\laplace p +\mu(t, x) p +\lambda(t, x) p +\omega(t, x) \cdot \nabla z 
\\
\hspace{1.75ex} = h(t, x), \mbox{ $ (t, x) \in  Q $,}
\\
\nabla p \cdot n_\Gamma = 0 \mbox{ on $ \Sigma $,}
\\
p(0, x) = p_0(x), ~ x \in \Omega;
\end{cases} $
}
\end{align*}
\vspace{-6ex}
\begin{align*}
\hspace{4ex} &  
\nonumber
\\
& \parbox{12cm}{
$ \begin{cases}
    \ds a(t, x) \partial_t z + b(t, x) z -\mathrm{div} \bigl( A(t, x) \nabla z +\nu \nabla z +p \omega(t, x) \bigr)
\\
\hspace{1.75ex}  = k(t, x), \mbox{ $ (t, x) \in Q $,}
\\
z = 0 \mbox{ on $ \Sigma $,}
\\
z(0, x) = z_0(x), ~ x \in \Omega.
\end{cases} $
}
\end{align*}
The unknown of the system (\hyperlink{P}{P}) is a pair of functions $ [p, z] : Q \longrightarrow \bbR^2 $ with $ [p_0, z_0] : \Omega \longrightarrow \bbR^2 $ denoting the initial data, and $ [h, k] : Q \longrightarrow \bbR^2 $ denoting the forcing terms. Moreover, $ \nu > 0 $ is a fixed constant, and $ a : Q \longrightarrow (0, \infty) $, $ b : Q \longrightarrow \bbR $, $ \mu : Q \longrightarrow [0, \infty) $, $ \lambda : Q \longrightarrow \bbR $, $ \omega : Q \longrightarrow \bbR^N $, and $ A : Q \longrightarrow \bbR^{N \times N} $ are given functions. 
\medskip

System (\hyperlink{P}{P}) is associated with an optimal control problem for the following  ``regularized" state system, denoted by (\hyperlink{S}{S}).
\begin{align}\label{S_1st}
    \mbox{(\hypertarget{S}{S}):} &  
    \nonumber
    \\
    & \parbox{12cm}{
    $ \begin{cases}
        \partial_t \eta -\laplace \eta +g(\eta) +\alpha'(\eta)f(\nabla \theta) = u(t, x), \mbox{ $ (t, x) \in  Q $,}
    \\
        \nabla \eta \cdot n_\Gamma = 0 \mbox{ on $ \Sigma $,}
        \\
        \eta(0, x) = \eta_0(x), ~ x \in \Omega;
    \end{cases} $
    }
\end{align}
\vspace{-6ex}
\begin{align}\label{S_2nd}
    \hspace{4ex} &  
    \nonumber
    \\
    & \parbox{12cm}{
    $ \begin{cases}
        \ds \alpha_0(t, x) \partial_t \theta -\mathrm{div} \left( \alpha(\eta) \nabla f(\nabla \theta) +\nu \nabla \theta \right) = v(t, x), \mbox{ $ (t, x) \in Q $,}
        \\
        \theta = 0 \mbox{ on $ \Sigma $,}
        \\
        \theta(0, x) = \theta_0(x), ~ x \in \Omega.
    \end{cases} $
}
\end{align}
This state system is a regularized version of a mathematical model of grain boundary motion, called ``Kobayashi--Warren--Carter model'', which was proposed by Kobayashi et al. \cite{MR1752970,MR1794359}. In their original works \cite{MR1752970,MR1794359}, the spatial domain $ \Omega $ is taken to be a two-dimensional domain, and the dynamics of grain boundaries are represented by a vector field
$$
(t, x) \in Q \mapsto \eta(t, x) \left[ \rule{0pt}{10pt} \cos \theta(t, x), \sin \theta(t, x) \right],
$$
consisting of two unknowns $ \eta : Q \longrightarrow \bbR $ and $ \theta : Q \longrightarrow \bbR $. The unknowns $ \eta $ and $ \theta $, with the initial data $ \eta_0 : \Omega \longrightarrow \bbR $ and  $ \theta_0 : \Omega \longrightarrow \bbR $, denote the order parameters which indicate the orientation order and the orientation angle of the grain, respectively. The functions $ u : Q \longrightarrow \bbR $ and $ v: Q \longrightarrow \bbR $ are given forcing terms. The Lipschitz function $ g : \bbR \longrightarrow \bbR $ in \eqref{S_1st} helps us to control the range of $ \eta $. Moreover, $ \alpha_0 : \bbR \longrightarrow (0, \infty) $ and  $ \alpha : \bbR \longrightarrow (0, \infty) $ in \eqref{S_2nd} are given functions that correspond to the mobilities of grain boundaries, and $ \alpha' $ in \eqref{S_1st} denotes the differential of $ \alpha  $. Finally, $ f : \bbR^N \longrightarrow [0, \infty) $ is a given $ C^2 $-convex function, and $ \nabla f $ denotes the gradient. 
\medskip

Notice that the $C^2$-function $f$ in \eqref{S_1st} and \eqref{S_2nd} is not present in the original 
works \cite{MR1752970,MR1794359}, instead the authors use a $ N $-dimensional Euclidean norm. This is the main reason we call \eqref{S_1st} and \eqref{S_2nd} a ``regularized" state system, and in this regard, we call the corresponding optimal control problem as regularized optimal control problem. In case of the Euclidean norm, the diffusion profile is given in a singular form $ -\mathrm{div} \bigl( \alpha(\eta) \frac{\nabla \theta}{|\nabla \theta|} +\nu \nabla \theta \bigr) $, where such a singularity is desirable to reproduce the facet-like situations as in the practical crystalline structures. However, the new diffusion profile after introducing $f$ is given by $ -\mathrm{div} \bigl( \alpha(\eta) \nabla f(\nabla \theta) +\nu \nabla \theta \bigr) $. Thus in order to capture the facet-like situations using the regularized model, it is crucial to ensure that $f$ is a good approximation of the Euclidean norm. We shall elaborate on this further in a forthcoming paper. 
\medskip

For completeness, we state the regularized optimal control problem next. We shall denote it 
by (\hyperlink{OCP}{OCP}).
\begin{description}
    \item[\textmd{\textrm{(\hypertarget{OCP}{OCP})}}]Find a pair of functions $ [u_*, v_*] \in [\scrH]^2 $, called an \emph{optimal control}, such that 
\begin{equation*}
[u_*, v_*] \in \mathscr{U}_\mathrm{ad} := \left\{ \begin{array}{l|l}
[u, v] \in [\scrH]^2 & 
\begin{minipage}{5.5cm}
$ u_a \leq u \leq u_b $ and $ v_a \leq v \leq v_b $, a.e. in $ Q $
\end{minipage} 
\end{array} \right\}(\ne \emptyset),
\end{equation*}
and 
$ [u_*, v_*] $ minimizes the following \emph{cost functional} $ \mathcal{J} = \mathcal{J}(u, v)  $:
    \begin{align*}
        [u, v] \in & \mathscr{U}_\mathrm{ad} \subset [\scrH]^2 \mapsto \mathcal{J}(u, v) 
        \nonumber
        \\
        := & \ds \frac{1}{2} \int_0^T \bigl( |(\eta -\eta_\mathrm{ad})(t)|_{H}^2 +|(\theta -\theta_\mathrm{ad})(t)|_{H}^2 \bigr) \, dt 
        \\
        & \ds +\frac{\lambda_1}{2} \int_0^T |u(t)|_{H}^2 \, dt  +\frac{\lambda_2}{2} \int_0^T |v(t)|_{H}^2 \, dt\in [0, \infty),
        \nonumber
    \end{align*}
where $ [\eta, \theta] $ solves the regularized state system (\hyperlink{S}{S}). 
\end{description}
Here, $ [u_a, v_a], [u_b, v_b] \in L^\infty(Q)^2 $  are fixed bounded obstacles for the variable $ [u, v] \in [\scrH]^2 $, called a \emph{control}, and are known as \emph{control bounds}, $ \lambda_i > 0 $, $ i = 1, 2 $, are constants of control cost, and $ [\eta_\mathrm{ad}, \theta_\mathrm{ad}] \in [\scrH]^2 $ is the pair of \emph{admissible target profiles} for $ [\eta, \theta] $. 
\medskip

The problem (\hyperlink{P}{P}) generalizes the following two key-systems that arise in the mathematical analysis of (\hyperlink{OCP}{OCP}):
\begin{description}
    \item[\hypertarget{sharp1}{$\sharp\,1)$}]the linearized system for (\hyperlink{S}{S}) which is associated with the G\^{a}teaux derivative $ \mathcal{J}' $ of the cost $ \mathcal{J} $ at each $ [u, v] \in \mathscr{U}_\mathrm{ad} $;
    \item[\hypertarget{sharp2}{$\sharp\,2)$}]the adjoint system of the above problem $ \hyperlink{sharp1}{\sharp\,1}) $.
\end{description}
In the context, problem $ \hyperlink{sharp1}{\sharp\,1}) $ corresponds to the system (\hyperlink{P}{P}) in the case when:
\begin{equation}\label{relat_P-S01}
    \begin{cases}
        a(t, x) = \alpha_0(t, x), ~ b(t, x) = 0,
        \\[1ex]
        \mu(t, x) = \bigl[ \alpha''(\eta) f(\nabla \theta) \bigr](t, x),
        \\[1ex]
        \lambda(t, x) = [g'(\eta)](t, x), 
        \\[1ex]
        \omega(t, x) = \bigl[ \alpha'(\eta) \nabla f(\nabla \theta) \bigr](t, x),
        \\[1ex]
        A(t, x) = \bigl[ \alpha(\eta) \nabla^2 f(\nabla \theta) \bigr](t, x),
        \\[1ex]
        [p_0(x), z_0(x)] = [0, 0],
    \end{cases}
    \mbox{a.e. $ (t, x) \in Q $,}
\end{equation}
and the problem $ \hyperlink{sharp2}{\sharp\,2}) $ corresponds to the system (\hyperlink{P}{P}) in the case when:
\begin{equation}\label{relat_P-S02}
    \begin{cases}
        a(t, x) = \alpha_0(T -t, x), ~ b(t, x) = \partial_t \alpha_0(T -t, x),
        \\[1ex]
        \mu(t, x) = \bigl[ \alpha''(\eta) f(\nabla \theta) \bigr](T -t, x),
        \\[1ex]
        \lambda(t, x) = [g'(\eta)](T -t, x), 
        \\[1ex]
        \omega(t, x) = \bigl[ \alpha'(\eta) \nabla f(\nabla \theta) \bigr](T -t, x),
        \\[1ex]
        A(t, x) = \bigl[ \alpha(\eta) \nabla^2 f(\nabla \theta) \bigr](T -t, x), 
        \\[1ex]
        [p_0(x), z_0(x)] = [0, 0],
    \end{cases}
    \mbox{a.e. $ (t, x) \in Q $,}
\end{equation}
where $ g' $ is the differential of the Lipschitz perturbation $ g $. 
\medskip

The goals of this paper are to establish well-posedness of the system (\hyperlink{P}{P}), and to design a convergent numerical scheme (time-discrete, spatially continuous) to numerically realize the problem. We shall accomplish our goals in the following two theorems.

\begin{description}
    \item[\textbf{\textrm{Main Theorem \hyperlink{mainThrm01}{1} \textmd{(Well-posedness of (\hyperlink{P}{P}))}:}}]to show existence, uniqueness, and continuous dependence on the data for (\hyperlink{P}{P}).
\item[\textbf{\textrm{Main Theorem \ref{mainThrm2} \textmd{(Numerical scheme and convergence)}:}}]to set up a time-discretization for (\hyperlink{P}{P}) and establish convergence of this scheme.
\end{description}

The content of this paper is as follows. The Main Theorems of this paper are stated in Section 2, on the basis of the preliminaries in Section 1. The proofs of Main Theorems are given in Section 4, by means of the Key-Lemmas provided in Section 3. Finally, in Section 5, we show several Corollaries and Remarks as the conclusions derived from the Main Theorems and Key-Lemmas.

\section{Preliminaries}\label{01Prelim}

We begin by prescribing the notations used throughout this paper. 
\medskip

As is mentioned in the previous Section, let $ (0, T) \subset \bbR$ be a bounded time-interval with a finite constant $ T > 0 $. Let  $ \Omega \subset \bbR^N $ be a bounded spatial domain with a dimension $ N \in \{1, 2, 3\} $. We denote by $ \Gamma $ the boundary $ \partial \Omega $ of $ \Omega $, and when $ N > 1 $, we suppose the Lipschitz regularity for $ \Gamma $. We denote by $ n_\Gamma \in \mathbb{S}^{N -1} $ the unit outer normal on $ \Gamma $, and set $ Q := (0, T) \times \Omega $ and $ \Sigma := (0, T) \times \Gamma $. Besides, we define  
\begin{equation*} \begin{cases}
H := L^2(\Omega) \mbox{ and } \scrH := L^2(0, T; H),
\\
V := H^1(\Omega) \mbox{ and } \scrV := L^2(0, T; V),
\\
V_0 := H_0^1(\Omega) \mbox{ and } \scrV_0 := L^2(0, T; V_0),
\end{cases}
\end{equation*}
and we suppose that:
\begin{equation*}
\begin{cases}
V \subset H = H^* \subset V^* \mbox{ and } \scrV \subset \scrH = \scrH^* \subset \scrV^*,
\\
V_0 \subset H = H^* \subset V_0^* \mbox{ and } \scrV_0 \subset \scrH = \scrH^* \subset \scrV_0^*,
\end{cases}
\end{equation*}
where ``$ = $'' is due to identification of the Hilbert spaces with their duals, and moreover, ``$ \subset $'' denotes continuous embeddings. 
\bigskip

\noindent
\underline{\textbf{\textit{Notations in real analysis.}}}
Let $ d \in \bbN $ take a fixed dimension. We denote by $ |x| $ and $ x \cdot y $ the Euclidean norm of $ x \in \mathbb{R}^d $ and the scalar product of $ x, y \in \bbR^d $, respectively, i.e., 
\begin{equation*}
\begin{array}{c}
| x | := \sqrt{x_1^2 +\cdots +x_d^2} \mbox{ \ and \ } x \cdot y  := x_1 y_1 +\cdots +x_d y_d, 
\\[1ex]
\mbox{ for all $ x = [x_1, \ldots, x_d], ~ y = [y_1, \ldots, y_d] \in \mathbb{R}^d $.}
\end{array}
\end{equation*}

We denote by $\mathcal{L}^{d}$ the $ d $-dimensional Lebesgue measure, and we denote by $ \mathcal{H}^{d} $ the $ d $-dimensional Hausdorff measure.  In particular, the measure theoretical phrases, such as ``a.e.'', ``$dt$'', and ``$dx$'', and so on, are all with respect to the Lebesgue measure in each corresponding dimension. Also in case of a Lipschitz-surface $ S $, the phrase ``a.e.'' is with respect to the Hausdorff measure in each corresponding Hausdorff dimension. 

Additionally, for a Borel set $ E \subset \bbR^d $, we denote by $ \chi_E : \bbR^d \longrightarrow \{0, 1\} $ the characteristic function of $ E $.
\bigskip

\noindent
\underline{\textbf{\textit{Abstract notations. (cf. \cite[Chapter II]{MR0348562})}}}
For an abstract Banach space $ X $, we denote by $ |\cdot|_{X} $ the norm of $ X $, and denote by $ \langle \cdot, \cdot \rangle_X $ the duality pairing between $ X $ and its dual $ X^* $. In particular, when $ X $ is a Hilbert space, we denote by $ (\cdot,\cdot)_{X} $ the inner product of $ X $. 

For two Banach spaces $ X $ and $ Y $,  let $  \mathscr{L}(X; Y)$ be the Banach space of bounded linear operators from $ X $ into $ Y $. 

For Banach spaces $ X_1, \dots, X_d $ with $ 1 < d \in \bbN $, let $ X_1 \times \dots \times X_d $ be the product Banach space endowed with the norm $ |\cdot|_{X_1 \times \cdots \times X_d} := |\cdot|_{X_1} + \cdots +|\cdot|_{X_d} $. However, when all $ X_1, \dots, X_d $ are Hilbert spaces, $ X_1 \times \dots \times X_d $ denotes the product Hilbert space endowed with the inner product $ (\cdot, \cdot)_{X_1 \times \cdots \times X_d} := (\cdot, \cdot)_{X_1} + \cdots +(\cdot, \cdot)_{X_d} $ and the norm $ |\cdot|_{X_1 \times \cdots \times X_d} := \bigl( |\cdot|_{X_1}^2 + \cdots +|\cdot|_{X_d}^2 \bigr)^{\frac{1}{2}} $. In particular, when all $ X_1, \dots,  X_d $ coincide with a Banach space $ Y $, we let:
\begin{equation*}
    [Y]^d := \overbrace{Y \times \cdots \times Y}^{\mbox{d times}},
\end{equation*}
and moreover, when there is no possibility of confusion, we simply denote by $ Y^d $ the product space of this case. 

For any proper functional $ \Psi : X \rightarrow(-\infty, \infty] $ on a Banach space $ X $, we denote by $ D(\Psi) $ the effective domain of $ \Psi $. 
\bigskip

\noindent
\underline{\textbf{\textit{Notations of basic differential operators.}}}
Let $ F: V \longrightarrow V^* $ be the duality map, defined as
\begin{equation}\label{dualMapF}
\langle F \varphi, \psi \rangle_{V} := (\nabla \varphi, \nabla \psi)_{[H]^N} + (\varphi, \psi)_{H}, \mbox{ for all $ \varphi, \psi \in V $.}
\end{equation}
As is well-known, 
\begin{equation*}
F \varphi = - \laplace \varphi +\varphi \mbox{ in $ H $, if $ \varphi \in H^2(\Omega) $, and $ \nabla \varphi \cdot n_\Gamma = 0 $ a.e. on $ \Gamma $.}
\end{equation*}

In the meantime, we note that the distributional divergence ``$ \mathrm{div} $'' in $ \mathscr{D}'(\Omega) $ can be regarded as a bounded linear operator from $ [H]^N $ into $ V_0^* $ (i.e. $ \mathrm{div} \in \mathscr{L}([H]^N; V_0^*) $), via the following variational identity: 
\begin{equation}\label{div_V0*}
\bigl\langle \mathrm{div}\, \varpi, \psi \bigr\rangle_{V_0} := -\bigl( \varpi, \nabla \psi \bigr)_{[H]^N}, \mbox{ for every $ \varpi \in [H]^N $ and $ \psi \in V_0 $.}
\end{equation}
\bigskip

\noindent
\underline{\textbf{\textit{Notations for the time-discretization.}}} 
Let $ \tau \in (0, 1) $ be a constant that denotes the time step-size, and let  $ \{ t_i \}_{i = 0}^{\infty} \subset [0, \infty) $ be the time sequence defined as:
\begin{equation} \label{seqTime}
t_i := i \tau, ~ i = 0, 1, 2, \dots.
\end{equation}
Let $ X $ be a Banach space. Then, for any sequence $ \{ [t_i, \gamma_i] \}_{i = 0}^\infty \subset [0, \infty) \times X $, we define the \emph{forward time-interpolation} $ [\overline{\gamma}]_\tau \in L_\mathrm{loc}^\infty([0, \infty); X)$, the \emph{backward time-interpolation} $ [\underline{\gamma}]_\tau \in L_\mathrm{loc}^\infty([0, \infty); X) $, and the \emph{linear time-interpolation} $ [{\gamma}]_\tau \in W_\mathrm{loc}^{1, 2}([0, \infty); X) $, by letting:
\begin{equation}\label{timeInterp}
\begin{cases}
~ \ds [\overline{\gamma}]_{\tau}(t) := \chi_{(-\infty, 0]}(t) \gamma_0 +\sum_{i = 1}^\infty \chi_{(t_{i -1}, t_i]}(t)\gamma_i,
\\
~ \ds [\underline{\gamma}]_{\tau}(t) := \sum_{i = 0}^\infty \chi_{(t_{i}, t_{i +1}]}(t) \gamma_i,
\\
    ~ \ds [{\gamma}]_{\tau}(t) := \sum_{i = 1}^\infty \chi_{[t_{i -1}, t_i)}(t) \left( \frac{t -t_{i -1}}{\tau} \gamma_i +\frac{t_i -t}{\tau} \gamma_{i -1} \right), 
\end{cases} \mbox{in $ X $, for $ t \geq 0 $,}
\end{equation}
respectively.

\begin{remark}\label{Rem.t-discrete}
    For an interval $ I \subset \bbR $,  a Banach space $ X $, and a constant $ q \in [1, \infty] $, we say that $ L^q(I; X) \subset L_\mathrm{loc}^q(\bbR; X) $ (resp. $ L_\mathrm{loc}^q(\bbR; X) \subset L^q(I; X) $) by identifying $ X $-valued functions on $ I $ (resp. on $ \bbR $) with the zero-extensions onto $ \bbR $ (resp. the restriction onto $ I $). Besides, under the notations as in \eqref{seqTime} and \eqref{timeInterp}, the following facts can be verified.
\begin{description}
    \item[{\textrm{\hypertarget{Fact1}{(Fact\,1)} $ \bullet $}}]If $ q \in [1, \infty) $, $ \gamma \in L^q(0, T; X) $, and the sequence $ \{ \gamma_i \}_{i = 0}^\infty \subset X $ is given by:
\begin{equation}\label{rApx}
\gamma_i := \frac{1}{\tau} \int_{t_{i -1}}^{t_{i}} \gamma(\varsigma) \, d \varsigma \mbox{ in $ X $,} ~ i = 0, 1, 2, \dots,
\end{equation}
then
\begin{equation*}
    [\overline{\gamma}]_\tau \to \gamma, ~ [\underline{\gamma}]_\tau \to \gamma, \mbox{ and } [\gamma]_\tau \to \gamma \mbox{ in  $ L^q(0, T; X) $, as $ \tau \downarrow 0 $,}
\end{equation*}
and
\begin{equation*}    
    \begin{array}{c}
        [\overline{\gamma}]_\tau(t) \to \gamma(t), ~ [\underline{\gamma}]_\tau(t) \to \gamma(t), \mbox{ and }[\gamma]_\tau(t) \to \gamma(t) 
        \\[1ex]
        \mbox{ in $ X $, a.e. $ t \in (0, T) $, as $ \tau \downarrow 0 $.}
    \end{array}
\end{equation*}
\item[~~~~$ \bullet $]If $ X $ is a reflexive Banach space, and $ \gamma \in L^\infty(0, T; X) $, then the sequence $ \{ \gamma_i \}_{i = 0}^\infty \subset X $ given by \eqref{rApx} fulfills that: 
\begin{equation*}
    \sup_{\tau \in (0, 1)} \bigl\{ |[\overline{\gamma}]_\tau|_{L^\infty(0, T; X)}, |[\underline{\gamma}]_\tau|_{L^\infty(0, T; X)}, |[\gamma]_\tau|_{L^\infty(0, T; X)} \bigr\} \leq |\gamma|_{L^\infty(0, T; X)},
\end{equation*}
\begin{equation*}
    \begin{cases}
        [\overline{\gamma}]_\tau \to \gamma, ~ [\underline{\gamma}]_\tau \to \gamma, \mbox{ and } [\gamma]_\tau \to \gamma 
        \\
        \qquad \mbox{in  $ L^q(0, T; X) $, for any $ q \in [1, \infty) $,}
        \\
        \qquad \mbox{weakly-$ * $ in $ L^\infty(0, T; X) $,}
    \end{cases} \mbox{as $ \tau \downarrow 0 $,}
\end{equation*}
\begin{equation*}
    \begin{array}{c}
        [\overline{\gamma}]_\tau(t) \to \gamma(t), ~ [\underline{\gamma}]_\tau(t) \to \gamma(t), \mbox{ and } [\gamma]_\tau(t) \to \gamma(t)
        \\[1ex]
        \mbox{in $ X $, a.e. $ t \in (0, T) $, as $ \tau \downarrow 0 $.}
    \end{array}
\end{equation*}
\item[~~~~$ \bullet $]If $ \gamma \in W^{1, \infty}(Q) $, and the sequence $ \{ \gamma_i \}_{i = 0}^\infty \subset W^{1, \infty}(\Omega) $ is given as:
\begin{equation*}
    \gamma_i := \begin{cases}
        \gamma(t_i) \mbox{ in $ W^{1, \infty}(\Omega) $, if $ t_i \leq T $, } 
        \\
        \gamma(t_{i -1}) \mbox{ in $ W^{1, \infty}(\Omega) $, if $ t_{i -1} \leq T < t_i $,}
        \\
        0 \mbox{ in $ W^{1, \infty}(\Omega) $,  otherwise,}
    \end{cases}
    i = 0, 1, 2, \dots,
\end{equation*}
then
\begin{equation*}
    \begin{cases}
        \ds \sup_{\tau \in (0, 1)} \bigl\{ |[\overline{\gamma}]_\tau|_{L^\infty(Q)}, |[\underline{\gamma}]_\tau|_{L^\infty(Q)}, |[\gamma]_\tau|_{C(\overline{Q})} \bigr\} \leq |\gamma|_{C(\overline{Q})},
        \\
        \ds \sup_{\tau \in (0, 1)} \bigl\{ |\nabla [\overline{\gamma}]_\tau|_{L^\infty(Q)^N}, |\nabla [\underline{\gamma}]_\tau|_{L^\infty(Q)^N}, |\nabla [\gamma]_\tau|_{L^\infty(Q)^N} \bigr\} \leq |\nabla \gamma|_{L^\infty(Q)^N},
        \\
        \ds \sup_{\tau \in (0, 1)} |\partial_t [\gamma]_\tau|_{L^\infty(Q)} \leq |\partial_t \gamma|_{L^\infty(Q)},
    \end{cases}
\end{equation*}
\begin{equation*}
\begin{cases}
[\overline{\gamma}]_\tau \to \gamma \mbox{ and } [\underline{\gamma}]_\tau \to \gamma, \mbox{ in $ L^\infty(0, T; C(\overline{\Omega})) $,}
\\[1ex]
[\gamma]_\tau \to \gamma \mbox{ in $ C(\overline{Q}) $,}
\end{cases}
\end{equation*}
\begin{equation*}
\begin{cases}
\partial_t [\gamma]_\tau \to \partial_t \gamma \mbox{ weakly-$*$ in $ L^\infty(Q) $,}
\\
\qquad \mbox{and in the pointwise sense a.e. in $ Q $,}
\end{cases}
\end{equation*}
and
\begin{equation*}
\begin{cases}
    \nabla [\overline{\gamma}]_\tau \to \nabla \gamma, ~ \nabla [\underline{\gamma}]_\tau \to \nabla \gamma, \mbox{ and } \nabla [\gamma]_\tau \to \nabla \gamma
\\
\qquad \mbox{weakly-$*$ in $ L^\infty(Q)^N $,}
\\
\qquad \mbox{and in the pointwise sense a.e. in $ Q $,}
\end{cases}
\mbox{as $ \tau \downarrow 0 $.}
\end{equation*}
\end{description}
\end{remark}

\section{Main Theorems}\label{02MainThms}

In this paper, the main results are discussed under the following assumptions.
\begin{enumerate}
    \item[\hypertarget{A1}{(A1)}]The sextet of given data $ [a, b, \mu, \lambda, \omega, A]$ belongs to a class $ \mathscr{S}_0 \subset [\scrH]^6 $, defined as follows.
    \begin{equation*}
        \mathscr{S}_0 := \left\{ 
        \begin{array}{l|l} 
            [\tilde{a}, \tilde{b}, \tilde{\mu}, \tilde{\lambda}, \tilde{\omega}, \tilde{A}] \in [\scrH]^6 & \hspace{-2ex} 
            \parbox{7.7cm}{
            \begin{itemize}
                \item $ \tilde{a} \in W^{1, \infty}(Q) $ and $ \log \tilde{a} \in L^\infty(Q) $;
                \item $ \tilde{b} \in L^\infty(Q) $;
                \item $ \tilde{\mu} \in L^\infty(0, T; H) $ and  $ \tilde{\mu} \geq 0 $ a.e. in $ Q $;
                \item $ \tilde{\lambda} \in L^\infty(Q) $; ~~ $ \bullet $ $ \tilde{\omega} \in L^\infty(Q)^N $;
                \item $ \tilde{A} \in L^\infty(Q)^{N \times N} $, and the value $ \tilde{A}(t, x) \in \bbR^{N \times N} $ is positive and symmetric, for a.e. $ (t, x) \in Q $.
            \end{itemize}
        } \end{array} \right\};
    \end{equation*}
\item[\hypertarget{A2}{(A2)}]$ \nu > 0 $ is a fixed constant, and $ [h, k] \in \mathscr{V}^* \times \mathscr{V}_0^* $ and $ [p_0, z_0]  \in [H]^2 $ are given pairs of functions.
\end{enumerate}
\begin{remark}\label{Rem.relat_P-S}
    Let us define:
\begin{equation*}
    \delta_*(\tilde{a}) := \mathrm{ess} \hspace{0.5ex} \inf_{\hspace{-3.5ex}(t, x) \in Q} \tilde{a}(t, x) \in \bbR, \mbox{ for any $ \tilde{a} \in L^\infty(Q) $.}
\end{equation*}
Then, the assumption (\hyperlink{A1}{A1}) implies that 
\begin{equation*}
a(t, x) \geq \delta_*(a) > 0, \mbox{ for a.e. $ (t, x) \in Q $.} 
\end{equation*}
In view of the previous works \cite{MR2469586,MR2548486,MR2836555,MR2668289,MR3038131,MR3362773,MR3082861,MR3203495,MR3670006,Nakayashiki18} and the relations \eqref{relat_P-S01}--\eqref{relat_P-S02}, we can say that (\hyperlink{A1}{A1}) is a meaningful assumption for the sextet of data $ [a, b, \mu, \lambda, \omega, A] $, and especially for the components $ \mu $, $ \omega $, and $ A $. 
\end{remark}
\begin{remark}\label{Rem.sol}
We here verify the following key-properties.
\begin{enumerate}
    \item[\hypertarget{R1}{(R1)}]Since $ N \in \{1, 2, 3\} $, and $ \Omega \subset \bbR^N $ is a bounded domain, we have the continuous (compact) embedding $ V \subset L^4(\Omega) $. Therefore, for any $ 0 \leq \mu^\circ \in H $ and any $ p^\circ \in V $, we can say that $ \sqrt{\mu^\circ} p^\circ \in H $ and $ \mu^\circ p^\circ \in V^* $, via the following estimate:
\begin{align*}
\int_\Omega \mu^\circ |p^\circ| |\varphi^\circ| \, dx & \leq |\mu^\circ|_H |p^\circ|_{L^4(\Omega)}|\varphi^\circ|_{L^4(\Omega)}
\nonumber
\\
& \leq (C_V^{L^4})^2 |\mu^\circ|_H |p^\circ|_V |\varphi^\circ|_{V}, \mbox{ for any $ \varphi^\circ \in V $,}
\end{align*}
and the variational identity:
\begin{equation*}
\bigl\langle \mu^\circ p^\circ, \varphi^\circ \bigr\rangle_{V} := \bigl( \sqrt{\mu^\circ} p^\circ, \sqrt{\mu^\circ} \varphi^\circ \bigr)_{H}, \mbox{ for any $ \varphi^\circ \in V $, respectively,}
\end{equation*}
where $ C_V^{L^4} > 0 $ is the constant of embedding $ V \subset L^4(\Omega) $. Similarly, for any $ 0 \leq  \mu \in L^\infty(0, T; H) $ and any $ p \in \mathscr{V} $, we can say that $ \sqrt{\mu} p \in \scrH $ and $ \mu p \in \mathscr{V}^* $, via:
\begin{align*}
\int_0^T \hspace{-1ex} \int_\Omega & \mu |p| |\varphi| \, dx dt \leq \int_0^T |\mu(t)|_H |p(t)|_{L^4(\Omega)}|\varphi(t)|_{L^4(\Omega)} \, dt
\\
& \leq (C_V^{L^4})^2 |\mu|_{L^\infty(0, T; H)} \int_0^T |p(t)|_V |\varphi(t)|_V \, dt
\\
& \leq  (C_V^{L^4})^2 |\mu|_{L^\infty(0, T; H)} |p|_{\mathscr{V}} |\varphi|_{\mathscr{V}}, \mbox{ for any $ \varphi \in \mathscr{V} $,}
\end{align*}
and 
\begin{equation*}
\langle \mu p, \varphi \rangle_{\mathscr{V}} := ( \sqrt{\mu} p, \sqrt{\mu} \varphi )_{\scrH}, \mbox{ for any $ \varphi \in \mathscr{V} $, respectively.}
\end{equation*}
\item[\hypertarget{R2}{(R2)}]For any $ \tilde{a} \in W^{1, \infty}(Q) \cup L^\infty(0, T; W^{1, \infty}(\Omega)) $ and any $ w \in \mathscr{V}_0^* $, we can say that $ \tilde{a} w $ $ (= w \tilde{a}) $ $\in \mathscr{V}_0^* $, via the following variational form:
    \begin{equation*}
        \langle \tilde{a} w, \psi \rangle_{\mathscr{V}_0} := \langle w, \tilde{a} \psi \rangle_{\mathscr{V}_0}, \mbox{ for any $ \psi \in \mathscr{V}_0 $,}
\end{equation*}
and can estimate that:
\begin{equation*}
    |\tilde{a} w|_{\mathscr{V}_0^*} \leq (1 +C_{V_0}^{H}) \bigl( |\tilde{a}|_{L^\infty(Q)} +|\nabla \tilde{a}|_{L^\infty(Q)^N} \bigr) |w|_{\mathscr{V}_0^*},
\end{equation*}
        by using the constant $ C_{V_0}^{H} $ of the embedding $ V_0 \subset H $. In particular, if $ \tilde{a} \in W^{1, \infty}(Q) $ and $ w \in W^{1, 2}(0, T; V_0^*) $, then $ \tilde{a} w \in W^{1, 2}(0, T; V_0^*) $, and 
\begin{equation*}
\partial_t (\tilde{a} w) = \tilde{a} \partial_t w +w \partial_t \tilde{a} \mbox{ in $ \mathscr{V}_0^* $.}
\end{equation*}
        Meanwhile, if $ \tilde{a} \in W^{1, \infty}(Q) \cup L^\infty(0, T; W^{1, \infty}(\Omega)) $ and $ \log \tilde{a} \in L^\infty(Q) $, then  it is estimated that:
\begin{equation*}
    |\tilde{a} w|_{\mathscr{V}_0^*} \geq \frac{\delta_*(\tilde{a})^2}{(1 +C_{V_0}^{H})(\delta_*(\tilde{a}) +|\nabla \tilde{a}|_{L^{\infty}(Q)^N})} |w|_{\mathscr{V}_0^*}. 
\end{equation*}
\end{enumerate}
\end{remark}

Now, the solution to our system (\hyperlink{P}{P}) is defined as follows.
\begin{definition}\label{Def.solOf(P)}
A pair of functions $ [p, z] \in [\scrH]^2 $ is called a solution to (\hyperlink{P}{P}), iff. $ [p, z] $ fulfills the following conditions.
\begin{enumerate}
    \item[\hypertarget{P1}{(P1)}]$ p \in W^{1, 2}(0, T; V^*) \cap L^2(0, T; V) \subset C([0, T]; H) $ with $ p(0) = p_0 $ in $ H $;
\\
$ z \in W^{1, 2}(0, T; V_0^*) \cap L^2(0, T; V_0) \subset C([0, T]; H) $ with $ z(0) = z_0 $ in $ H $.
\item[\hypertarget{P2}{(P2)}]$ p $ solves the following variational identity:
\begin{equation*}
\begin{array}{c}
\ds \langle \partial_t p(t), \varphi \rangle_V +\int_\Omega \nabla p(t) \cdot \nabla \varphi \, dx +\int_\Omega  \mu(t) p(t) \varphi \, dx
\\[2ex]
+\bigl( \lambda(t) p(t) +\omega(t) \cdot \nabla z(t), \varphi \bigr)_{H} = \langle h(t), \varphi \rangle_{V},
\\[2ex]
\mbox{ for any $ \varphi \in V $, and a.e. $ t \in (0, T) $.}
\end{array}
\end{equation*}
\item[\hypertarget{P3}{(P3)}]$ z $ solves the following variational identity:
\begin{equation*}
\begin{array}{l}
    \ds \langle \partial_t z(t), a(t) \psi \rangle_{V_0} +( b(t)z(t), \psi )_{H}
\\[1ex]
\ds \qquad +\int_\Omega  \bigl( A(t) \nabla z(t) +\nu \nabla z(t) \bigr) \cdot \nabla \psi \, dx  +\int_\Omega p(t) \omega(t) \cdot \nabla \psi \, dx
\\[2ex]
\qquad = \langle k(t), \psi \rangle_{V_0}, \mbox{ for any $ \psi \in V_0 $, and a.e. $ t \in (0, T) $.}
\end{array}
\end{equation*}
\end{enumerate}
\end{definition}

\begin{remark}\label{Rem.sol2}
    Taking into account the notations as in \eqref{dualMapF} and \eqref{div_V0*}, and Remark \ref{Rem.sol}, the variational identities as in Definition \ref{Def.solOf(P)} (\hyperlink{P2}{P2}) and (\hyperlink{P3}{P3}) can be reformulated as follows.
\begin{equation*}
\begin{cases}
\partial_t p +(Fp -p) +\mu p +\lambda p +\omega \cdot \nabla z = h \mbox{ in $ \mathscr{V}^* $,}
\\[1ex]
a \partial_t z +bz -\mathrm{div} \, \bigl( A \nabla z +\nu \nabla z +p \omega \bigr) = k \mbox{ in $ \mathscr{V}_0^* $.}
\end{cases}
\end{equation*}
\end{remark}

In this paper, the solution to (\hyperlink{P}{P}) will be obtained by means of the time-discretization method. In view of this, we denote by $ \tau \in (0, 1) $ the constant of time step-size, and consider the following semi-discrete (time discrete, spatially continuous) scheme, denoted by (\hyperlink{DP}{DP})$_\tau$, which is a discrete version of (\hyperlink{P}{P}). 
\begin{align}\label{DP_1st}
    \parbox{7ex}{(\hypertarget{DP}{DP})$_\tau$:} &  
\nonumber
\\
& \parbox{12cm}{
$ \begin{array}{l}
\ds 
\frac{1}{\tau} \bigl( p_i -p_{i -1}, \varphi \bigr)_H +\ds \int_\Omega \nabla p_i \cdot \nabla \varphi \, dx +\int_\Omega  \mu_i p_i \varphi \, dx
\\[2ex]
    \qquad +\ds \bigl( \lambda_{i} p_i +\omega_i \cdot \nabla z_i, \varphi \bigr)_{H} = \langle h_i, \varphi \rangle_V, 
\\[2ex]
\qquad \mbox{for every $ \varphi \in V $, $ i = 1, 2, 3, \dots $, with the initial data $ p_0 \in H $,}  
\end{array} $
}
\end{align}
\vspace{-5.5ex}
\begin{align}\label{DP_2nd}
\parbox{7ex}{~~} &  
\nonumber
\\
& \parbox{12cm}{
$ \begin{array}{l}
\ds \frac{1}{\tau} \bigl( a_{i} (z_i - z_{i -1}), \psi \bigr)_{H} +(b_{i} z_i, \psi )_{H}
\\[2ex]
\qquad +\ds \int_\Omega  \bigl( A_i \nabla z_i +\nu \nabla z_i \bigr) \cdot \nabla \psi \, dx +\int_\Omega p_i \omega_i \cdot \nabla \psi \, dx
    = \langle k_i, \psi \rangle_{V_0}, 
\\[2ex]
\qquad \mbox{for every $ \psi \in V_0 $, $ i = 1, 2, 3, \dots $, with the initial data $ z_0 \in H $.} 
\end{array}
 $
}
\end{align}
\noindent
The solution to (\hyperlink{DP}{DP})$_\tau$ is defined as a sequence $ \{ [p_i, z_i] \}_{i = 0}^\infty \subset [H]^2 $, such that
\begin{equation*}
    [p_i, z_i] \in V \times V_0 \mbox{ for all $ i \geq 1 $, and $ \{ [p_i, z_i] \}_{i = 0}^\infty $  fulfills \eqref{DP_1st} and \eqref{DP_2nd}.}
\end{equation*}
We let $ \{ [a_{i}, b_{i}, \mu_i, \lambda_{i}, \omega_i, A_i] \}_{i = 0}^\infty  $ be a bounded sequence in $ W^{1, \infty}(\Omega) \times  L^\infty(\Omega) \times H \times  L^\infty(\Omega)  \times L^\infty(\Omega)^{N} \times L^\infty(\Omega)^{N \times N} $, such that:
\begin{subequations}\label{D-given}
    \begin{equation}\label{D-given_a}
        \begin{cases}
            \ds \sup_{i \geq 0} |a_{i}|_{W^{1, \infty}(\Omega)} \leq |a|_{W^{1, \infty}(Q)}, ~ 
            \ds \sup_{i \geq 0} |b_{i}|_{L^\infty(\Omega)} \leq |b|_{L^\infty(Q)}, ~ 
            \\
            \ds \sup_{i \geq 0} |\mu_i|_H \leq |\mu|_{L^\infty(0, T; H)}, ~ 
            \ds \sup_{i \geq 0} |\lambda_{i}|_{L^\infty(\Omega)} \leq |\lambda|_{L^\infty(Q)}, ~ 
            \\
            \ds \sup_{i \geq 0} |\omega_i|_{L^\infty(\Omega)^N} \leq |\omega|_{L^\infty(Q)^N}, ~ 
            \ds \sup_{i \geq 0} |A_i|_{L^\infty(\Omega)^{N \times N}} \leq |A|_{L^\infty(Q)^{N \times N}},
        \end{cases}
    \end{equation} 
    \begin{equation}\label{D-given_b}
        \begin{array}{c}
            \mbox{$ a_{i} \geq \delta_*(a) $,  $ \mu_i \geq 0 $, and $ A_i $ is positive and symmetric,} 
            \\
            \qquad\mbox{a.e. in $ \Omega $, for $ i = 0, 1, 2, \dots $,}
        \end{array}
    \end{equation} 
    \begin{equation}\label{D-given_c}
        \left\{ \parbox{7cm}{
            $ [\overline{a}]_\tau \to a $ in $ L^\infty(0, T; C(\overline{\Omega})) $, 
            \\[1ex]
            $ [{a}]_\tau \to a $  in $ C(\overline{Q}) $, 
        } \right. 
    \end{equation}
    \begin{equation}\label{D-given_e}
        \left\{ \parbox{7cm}{
            $ [\overline{\mu}]_\tau \to \mu $ weakly-$ * $ in $ L^\infty(0, T; H) $, 
            \\[1ex]
            $ [\overline{\mu}]_\tau(t) \to \mu(t) $ in $ H $, a.e. $ t \in (0, T) $, 
        }\right. 
    \end{equation}
    \begin{align}\label{D-given_o}
        \bigl[ \partial_t & [a]_\tau, \nabla [\overline{a}]_\tau, [\overline{b}]_\tau, [\overline{\lambda}]_\tau, [\overline{\omega}]_\tau, [\overline{A}]_\tau \bigr]
        \to [\partial_t a, \nabla a, b,  \lambda, \omega, A] \mbox{ weakly-$*$ }
        \nonumber
        \\
        & \mbox{in $ L^\infty(Q) \times L^\infty(Q)^N \times L^\infty(Q) \times L^\infty(Q) \times L^\infty(Q)^N \times L^\infty(Q)^{N \times N} $,}
        \nonumber
        \\ 
        & \mbox{and in the pointwise sense a.e. in $ Q $, as $ \tau \downarrow 0 $,}
    \end{align}
and $ \{ [h_i, k_i] \}_{i = 0}^\infty $ is a bounded sequence in $ \mathscr{V}^* \times \mathscr{V}_0^* $, such that:
\begin{equation}\label{rxForces01}
    \begin{cases}
        \ds K^* := \sup_{\tau \in (0, 1)} \bigl| \bigl[ [\overline{h}]_\tau, [\overline{k}]_\tau \bigr] \bigr|_{\mathscr{V}^* \times \mathscr{V}_0^*} < \infty,
        \\
        \bigl[ [\overline{h}]_\tau, [\overline{k}]_\tau \bigl] \to [h, k] \mbox{ in $ \mathscr{V}^* \times \mathscr{V}_0^* $, as $ \tau \downarrow 0 $.}
    \end{cases}
\end{equation}
\end{subequations}
\begin{remark}\label{Rem.HA01}
    Notice that it is straightforward to obtain $ \{ [a_{i}, b_{i}, \mu_i, \lambda_{i}, \omega_i, A_i] \}_{i = 0}^\infty  $ and \linebreak $ \{ [h_i, k_i] \}_{i = 0}^\infty $ fulfilling \eqref{D-given}, because the assumptions (\hyperlink{A1}{A1}) and (\hyperlink{A2}{A2}) allow us to apply the standard method as in Remark \ref{Rem.t-discrete} (\hyperlink{Fact1}{Fact\,1}). 
\end{remark}

We are now ready to state the Main Theorems of this paper. 

\hypertarget{mainThrm01}{}
\begin{mainThrm}[Well-posedness]\label{mainThrm1}
\begin{description}
    \item[\textmd{\textit{\hypertarget{I-A}{(I-A)} (Existence and uniqueness)}}]Under the as- \linebreak sumptions (\hyperlink{A1}{A1}) and (\hyperlink{A2}{A2}), the system (\hyperlink{P}{P}) admits a unique solution $ [p, z] $. 
    \item[\textmd{\textit{\hypertarget{I-B}{(I-B)} (Continuous dependence on data)}}]For every forcing pairs $ [h^\ell, k^\ell] \in \mathscr{V}^* \times \mathscr{V}_0^* $, initial pairs $ [p_0^\ell, z_0^\ell] \in [H]^2 $, and sextets of data $ [a^\ell, b^\ell, \mu^\ell, \lambda^\ell, \omega^\ell, A^\ell] \in \mathscr{S}_0 $, $ \ell = 1, 2 $, let $ [p^\ell, z^\ell] \in [\scrH]^2 $, $ \ell = 1, 2 $, be the corresponding solutions to (\hyperlink{P}{P}). Additionally, let $ C^* = C^*(a^1, b^1, \lambda^1, \omega^1) $ be a positive constant, depending on $ |a^1|_{W^{1, \infty}(Q)} $, $ \delta_*(a^1) $, $ |b^1|_{L^\infty(Q)} $, $ |\lambda^1|_{L^\infty(Q)} $, and $ |\omega^1|_{L^\infty(Q)^N} $, which is defined as:
\begin{align}\label{defOf_C*}
    C^* := & \ds \frac{9 (1 +\nu)}{\min \{1, \nu, \delta_*(a^1) \}} \bigl( 1 +(C_V^{L^4})^2 +(C_V^{L^4})^4 +(C_{V_0}^{L^4})^2 \bigr) \cdot
\nonumber
    \\
    & \cdot \bigl( 1 +|a^1|_{W^{1, \infty}(Q)} +|b^1|_{L^\infty(Q)} +|\lambda^1|_{L^\infty(Q)} +|\omega^1|_{L^\infty(Q)^N}^2 \bigr),
\end{align}
with the constants $ C_V^{L^4} > 0 $ and $ C_{V_0}^{L^4}  > 0 $ of the respective embeddings $ V \subset L^4(\Omega) $ and $ V_0 \subset L^4(\Omega) $. Then, it holds that:
\begin{align}\label{contiDep}
    |(p^1 & -p^2)(t)|_H^2 +| \mbox{\small$ \sqrt{a^1(t)} $}(z^1 -z^2)(t)|_H^2
\nonumber
\\[1ex]
& +\int_0^t |(p^1 -p^2)(\varsigma)|_V^2 \, d \varsigma +\nu \int_0^t |(z^1 -z^2)(\varsigma)|_{V_0}^2 \, d \varsigma
\nonumber
\\[1ex]
    \leq & e^{3C^* T} \bigl( |p_0^1 -p_0^2|_H^2 +| \mbox{\small$ \sqrt{a^1(0)} $}(z_0^1 -z_0^2)|_H^2 \bigr) 
\nonumber
\\[1ex]
    & +2 C^* e^{3 C^* T} \int_0^T \bigl( |(h^1 -h^2)(t)|_{V^*}^2 +|(k^1 -k^2)(t)|_{V_0^*}^2 \bigr) \, dt 
\\[1ex]
    & +2 C^* e^{3 C^* T} \int_0^T R^*(t) \, dt, \mbox{ for all $ t \in [0, T] $,}
\nonumber
\end{align}
where 
\begin{align}\label{defOf_R*}
    & R^*(t) := |\partial_t z^2(t)|_{V_0^*}^2 \bigl( |a^1 -a^2|_{C(\overline{Q})}^2 +|\nabla (a^1 -a^2)(t)|_{L^4(\Omega)^N}^2 \bigr)
    \nonumber
    \\[1ex]
    & \quad +|p^2(t)|_{V}^2 \bigl( |(\mu^1 -\mu^2)(t)|_{H}^2 +|(\omega^1 -\omega^2)(t)|_{L^4(\Omega)^N}^2 \bigr) 
    \nonumber
    \\[1ex]
    & \quad +|z^2(t)|_{V_0}^2 |(b^1 -b^2)(t)|_{L^4(\Omega)}^2 +|p^2(t)(\lambda^1 -\lambda^2)(t)|_H^2
    \nonumber
    \\[1ex]
    & \quad +|\nabla z^2(t) \cdot (\omega^1 -\omega^2)(t)|_H^2 +|(A^1 -A^2)(t) \nabla z^2(t)|_{[H]^N}^2, 
    \\[1ex]
    & \mbox{for  a.e. $ t \in (0, T) $.}
    \nonumber
\end{align}
\end{description}
\end{mainThrm}

\begin{mainThrm}[Numerical scheme and convergence]\label{mainThrm2}
Under the assumptions (\hyperlink{A1}{A1}) \linebreak and (\hyperlink{A2}{A2}), the following two items hold. 
\begin{description}
    \item[\textmd{\textit{\hypertarget{II-A}{(II-A)}}}]There is a constant $ \tau_* = \tau_*(a, b, \lambda, \omega) \in (0, 1) $, depending on $ \delta_*(a) $, $ |b|_{L^\infty(Q)} $, $ |\lambda|_{L^\infty(Q)} $, and $ |\omega|_{L^\infty(Q)^N} $, such that for any $ \tau \in (0, \tau_*) $, the semi-discrete scheme (\hyperlink{DP}{DP})$_\tau$ admits a unique solution $ \{ [p_i, z_i] \}_{i = 0}^\infty \subset [H]^2 $.
\item[\textmd{\textit{\hypertarget{II-B}{(II-B)}}}]The sequence of the linear time-interpolation $ \bigl\{ \bigl[ [p]_\tau, [z]_\tau] \bigr] \bigr\}_{\tau \in (0, \tau_*)} $ converges to the solution $ [p, z] $ to (\hyperlink{P}{P}), as $ \tau \downarrow 0 $, in the sense that: 
    \begin{subequations}\label{concluConv}
    \begin{align}\label{concluConv_a}
        \bigl[ [p]_\tau, & [z]_\tau \bigr] \to [p, z] \mbox{ in $ [\scrH]^2 $, in $ C([0, T]; V^*) \times C([0, T]; V_0^*) $, }
        \nonumber
        \\
        & \mbox{weakly in $ W^{1, 2}(0, T; V^*) \times W^{1, 2}(0, T; V_0^*) $,}
        \nonumber
        \\
        & \mbox{and weakly-$*$ in $ L^\infty(0, T; H)^2 $, as $ \tau \downarrow 0 $.} 
    \end{align}
    Moreover, if $ [p_0, z_0] \in V \times V_0 $, then:
    \begin{equation}\label{concluConv_c}
        \bigl[ [{p}]_\tau, [{z}]_\tau \bigr] \to [p, z] \mbox{ in $ C([0, T]; H)^2 $, and in $ \mathscr{V} \times \mathscr{V}_0 $, as $ \tau \downarrow 0 $.}
    \end{equation}
\end{subequations}
\end{description}
\end{mainThrm}
\begin{remark} \label{Rem.mainTh2}
    Notice that the convergence \eqref{concluConv_a} implies that:
    \begin{align*}
        \bigl[ [\overline{p}]_\tau, [\overline{z}]_\tau \bigr] & \to [p, z] \mbox{ and } \bigl[ [\underline{p}]_\tau, [\underline{z}]_\tau \bigr] \to [p, z],  
        \\
        & 
        \mbox{in $ [\scrH]^2 $ and in $ L^\infty(0, T; V^*) \times L^\infty(0, T; V_0^*) $, as $ \tau \downarrow 0 $.}
    \end{align*}
    Also, the convergences \eqref{concluConv_c} implies that:
    \begin{align*}
        \bigl[ [\overline{p}]_\tau, [\overline{z}]_\tau \bigr] & \to [p, z] \mbox{ and } \bigl[ [\underline{p}]_\tau, [\underline{z}]_\tau \bigr] \to [p, z], 
        \mbox{ in $ L^\infty(0, T; H)^2 $, as $ \tau \downarrow 0 $.}
    \end{align*}
    Moreover, when $ [p_0, z_0] \in V \times V_0 $, we will obtain that:
    \begin{align*}
        \bigl[ [\overline{p}]_\tau, [\overline{z}]_\tau \bigr] & \to [p, z] \mbox{ and } \bigl[ [\underline{p}]_\tau, [\underline{z}]_\tau \bigr] \to [p, z], 
        \mbox{ in $ \mathscr{V} \times \mathscr{V}_0 $, as $ \tau \downarrow 0 $.}
    \end{align*}
    in the process of the proof (cf. \eqref{mThIIB-11}--\eqref{mThIIB-14}). 
\end{remark}
\section{Key-Lemmas}\label{03Key-Lems}

In this Section, we prove a few Key-Lemmas that are vital for our Main Theorems. 
\begin{keyLemma}\label{axLem01}
    Let the assumptions (\hyperlink{A1}{A1}) and (\hyperlink{A2}{A2}) hold. Let $ a^\circ \in L^\infty(\Omega) $, $ b^\circ \in L^\infty(\Omega) $, $ \mu^\circ \in H $, $ \lambda^\circ \in L^\infty(\Omega) $, $ \omega^\circ \in L^\infty(\Omega)^N $, and $ A^\circ \in L^\infty(\Omega)^{N \times N} $ be functions, such that
    \begin{subequations}\label{axLem01-01}
        \begin{equation} \label{axLem01-01_a}
            \begin{cases}
                \ds |a^\circ|_{L^\infty(\Omega)} \leq |a|_{L^\infty(Q)}, ~  |b^\circ|_{L^\infty(\Omega)} \leq |b|_{L^\infty(Q)},
                \\
                |\mu^\circ|_H \leq |\mu|_{L^\infty(0, T; H)}, ~ |\lambda^\circ|_{L^\infty(\Omega)} \leq |\lambda|_{L^\infty(Q)} ,
                \\
                |\omega^\circ|_{L^\infty(\Omega)^N} \leq |\omega|_{L^\infty(Q)^N}, \mbox{ and } |A^\circ|_{L^\infty(\Omega)^{N \times N}} \leq |A|_{L^\infty(Q)^{N \times N}},
            \end{cases}
        \end{equation}
        and
        \begin{equation} \label{axLem01-01_b}
            a^\circ \geq \delta_*(a), ~ \mu^\circ \geq 0, \mbox{ and $ A^\circ $ is positive and symmetric, a.e. in $ \Omega $.}
        \end{equation}
    \end{subequations}
    Then, there is a small constant $ \tau_0 = \tau_0(a, b, \lambda, \omega) \in (0, 1) $, depending on $ \delta_*(a) $, $ |b|_{L^\infty(Q)} $, $ |\lambda|_{L^\infty(Q)} $, and  $ |\omega|_{L^\infty(Q)^N} $, such that for every pairs of functions $ [h^\circ, k^\circ] \in V^* \times V_0^* $ and $ [p_0, z_0] \in [H]^2 $, the following variational system admits a unique solution $ [p, z] \in V \times V_0 $: 
\begin{equation}\label{ax1st}
\begin{array}{l}
\ds \frac{1}{\tau} ( p -p_0, \varphi )_H +\int_\Omega \nabla p \cdot \nabla \varphi \, dx +\int_\Omega \mu^\circ p \varphi \, dx
\\[2ex]
    \qquad \ds +\bigl( \lambda^\circ p +\omega^\circ \cdot \nabla z, \varphi \bigr)_{H} = \langle h^\circ, \varphi \rangle_V, \mbox{ for any $ \varphi \in V $,}
\end{array}
\end{equation}
\begin{equation}\label{ax2nd}
\begin{array}{l}
\ds \frac{1}{\tau} ( a^\circ(z -z_0), \psi )_H +( b^\circ z, \psi )_H +\int_\Omega \bigl( A^\circ \nabla z +\nu \nabla z \bigr) \cdot \nabla \psi \, dx
\\[2ex]
    \qquad \ds +\int_\Omega p \omega^\circ \cdot \nabla \psi \, dx = \langle k^\circ, \psi \rangle_{V_0}, \mbox{ for any $ \psi \in V_0 $.}
\end{array}
\end{equation}
\end{keyLemma}
\begin{proof}
First, for the proof of existence, we  define a (non-convex) functional $ \mathcal{E} : [H]^2 \longrightarrow (-\infty, \infty] $, by letting:
    \begin{equation}\label{enrgyE}
[p, z] \in [H]^2 \mapsto \mathcal{E}(p, z) := \left\{ \begin{array}{ll}
\multicolumn{2}{l}{
\ds \frac{1}{2 \tau} \bigl( |p -p_0|_H^2 +|\sqrt{a^\circ}(z -z_0)|_H^2 \bigr)
}
\\[2ex]
& \ds +\frac{1}{2} \int_\Omega \bigl( |\nabla p|^2 +|[A^\circ]^{\frac{1}{2}} \nabla z|^2 +\nu |\nabla z|^2 \bigr) \, dx 
\\[2ex]
& \ds +\int_\Omega |\sqrt{\mu^\circ} p|^2 \, dx +\int_\Omega p \bigl( \omega^\circ \cdot \nabla z \bigr) \, dx 
\\[2ex]
& \ds +\frac{1}{2} \int_\Omega \bigl( \lambda^\circ |p|^2 +b^\circ |z|^2 \bigr) \, dx 
\\[2ex]
    & -\langle h^\circ, p \rangle_V -\langle k^\circ, z \rangle_{V_0}, \mbox{ if $ [p, z] \in V \times V_0 $,}
\ \\
\ \\[-0ex]
\infty, & \mbox{otherwise,}
\end{array} \right.
\end{equation}
and we set:
\begin{equation}\label{axLem01-03}
    \tau_0 = \tau_0(a, b, \lambda, \omega) := \frac{\min \{1, \nu,  \delta_*(a) \}}{16(1 +\nu +\delta_*(a))(1 +|b|_{L^\infty(Q)} +|\lambda|_{L^\infty(Q)} +|\omega|_{L^\infty(Q)^N}^2)}.
\end{equation}
    Notice that nonconvexity in $ \mathcal{E} $ is due to $\int_\Omega p \bigl( \omega^\circ \cdot \nabla z \bigr) \, dx +\frac{1}{2} \int_\Omega \bigl( \lambda^\circ |p|^2 +b^\circ |z|^2 \bigr) \, dx $. Then, in the light of Remark \ref{Rem.sol}, it is easy to check that $ \mathcal{E} $ is a proper lower semi-continuous functional on $ [H]^2 $, such that:
\begin{align*}
    \mathcal{E}(p, z) & \geq \frac{1}{8 \tau} \bigl( |p|_H^2 +\delta_*(a) |z|_H^2 \bigr) +\frac{1}{4} \bigl( |\nabla p|_{[H]^N}^2 +\nu |\nabla z|_{[H]^N}^2 \bigr) 
\\
& \qquad -\frac{1}{2 \tau} \bigl( |p_0|_H^2 +\delta_*(a) |z_0|_H^2 \bigr) -\left( |h^\circ|_{V^*}^2 +\frac{2}{\nu} |k^\circ|_{V_0^*}^2 \right),
\\
& \mbox{for any $ [p, z] \in V \times V_0 $, whenever $ \tau \in (0, \tau_0) $,}
\end{align*}
    via the following computations:
    \begin{align*}
        \frac{1}{2 \tau} \bigl( |p & -p_0|_H^2 +|\sqrt{a^\circ} (z -z_0)|_H^2 \bigr) 
        \\
        \geq & \frac{1}{4 \tau} \bigl( |p|_H^2 +\delta_*(a) |z|_H^2 \bigr) -\frac{1}{2 \tau} \bigl( |p_0|_H^2 +\delta_*(a) |z_0|_H^2 \bigr),
    \end{align*}
    \begin{align*}
        \int_\Omega p (\omega^\circ & \cdot \nabla z) \, dx +\frac{1}{2} \int_\Omega \bigl( \lambda^\circ |p|^2 +b^\circ |z|^2 \bigr) \, dx 
        \\
        \geq & -\frac{\nu}{8} |\nabla z|_{[H]^N}^2 -\left( \frac{1}{2} |\lambda|_{L^\infty(Q)} +\frac{2}{\nu} |\omega|_{L^\infty(Q)^N}^2 \right) |p|_H^2 -\frac{1}{2} |b|_{L^\infty(Q)} |z|_H^2, 
    \end{align*}
    and
    \begin{align*}
        \langle h^\circ, p \rangle_V + & \langle k^\circ, z \rangle_{V_0} \geq -\frac{1}{4} \bigl( |p|_H^2 +|\nabla p|_{[H]^N}^2 \bigr) -\frac{\nu}{8} |\nabla z|_{[H]^N}^2 -|h^\circ|_{V^*}^2 -\frac{2}{\nu} |k^\circ|_{V_0^*}^2.
    \end{align*}
    Additionally, when $ \tau \in (0, \tau_0) $, the system \{\eqref{ax1st},\eqref{ax2nd}\} coincides with the stationarity system for $ \min \mathcal{E} $, and hence, the solution to \{\eqref{ax1st},\eqref{ax2nd}\} is easily obtained, by means of the direct method of calculus of variations (cf. \cite[Theorem 3.2.1]{MR2192832}).
\bigskip

    Next, to prove uniqueness, we assume that there are two solutions $ [p^{\ell}, z^{\ell}] \in V \times V_0 $, $ \ell = 1, 2 $, to the system \{\eqref{ax1st},\eqref{ax2nd}\}. Besides, let us take the difference between the equations \eqref{ax1st} (resp. \eqref{ax2nd}) corresponding to $ p^\ell $ (resp. $ z^\ell $), $ \ell = 1, 2 $, and put $ \varphi = p^1 -p^2 $ (resp. $ \psi = z^1 -z^2 $). Then, taking the sum of the results, we arrive at
\begin{align*}
\frac{1}{\tau} & \bigl( |p^1 -p^2|_H^2 +|\sqrt{a^\circ}(z^1 -z^2)|_H^2 \bigr) +\int_\Omega b^\circ |z^1 -z^2|^2 \, dx
\\
& +\bigl( |\nabla (p^1 -p^2)|_{[H]^N}^2 +|[A^\circ]^{\frac{1}{2}}\nabla (z^1 -z^2)|_{[H]^N}^2 +\nu |\nabla (z^1 -z^2)|_{[H]^N}^2 \bigr)
\\
    & +|\sqrt{\mu^\circ} (p^1 -p^2)|_H^2  +\int_\Omega \lambda^\circ |p^1 -p^2|^2 \, dx 
+2 \int_\Omega (p^1 -p^2) \omega^\circ \cdot \nabla (z^1 -z^2) \, dx = 0. 
\end{align*}
Here, applying \eqref{axLem01-01}--\eqref{ax2nd}, \eqref{axLem01-03}, and Young's inequality, it is inferred that:
\begin{equation*}
\frac{1}{2 \tau} \bigl( |p^1 -p^2|_H^2 +\delta_*(a)|z^1 -z^2|_H^2 \bigr) \leq 0, \mbox{ whenever $ \tau \in (0, \tau_0) $.}
\end{equation*}
Since $\delta_*(a) > 0$ (cf. Remark \ref{Rem.relat_P-S}), the proof is finished.
\end{proof}

Next, let us define:
\begin{equation*}
\begin{cases}
\mathscr{X} :=  \bigl( \mathscr{V} \times W^{1, 2}(0, T; V^*) \bigr) \times \bigl( \mathscr{V}_0 \times W^{1, 2}(0, T; V_0^*) \bigr),
\\
\mathscr{Y} := L^2(0, T; V^*) \times L^2(0, T; V_0^*) ~ \bigl( = \mathscr{V}^* \times \mathscr{V}_0^* \bigr),
\end{cases}
\end{equation*}
and define a class of sextets of functions $ \mathscr{S} $ as follows: 
\begin{equation*}
\mathscr{S} := \left\{ \begin{array}{l|l}
    [\tilde{a}, \tilde{b}, \tilde{\mu}, \tilde{\lambda}, \tilde{\omega}, \tilde{A}] \in [\scrH]^6 & \parbox{6.4cm}{$ \tilde{a} \in W^{1, \infty}(Q) \cup L^\infty(0, T; W^{1, \infty}(\Omega)) $, and $ [\delta_*(\tilde{a}), \tilde{b}, \tilde{\mu}, \tilde{\lambda}, \tilde{\omega}, \tilde{A}] \in \mathscr{S}_0 $}
\end{array} \right\}.
\end{equation*}
Then, for any sextet of data $ [a, b, \mu, \lambda, \omega, A] \in \mathscr{S} $, we can define an operator $ \mathcal{T} = \mathcal{T}(a, b, \mu, \lambda, \omega, A): \mathscr{X} \longrightarrow \mathscr{Y} $, by letting:
\begin{align}\label{op_A}
    [p, \tilde{p}, z, \tilde{z}] \in \mathscr{X} & \mapsto \mathcal{T} [p, \tilde{p}, z, \tilde{z}] = \mathcal{T}(a, b, \mu, \lambda, \omega, A)[p, \tilde{p}, z, \tilde{z}]
    \nonumber
    \\[1ex]
    & := \rule{-1pt}{20pt}^\mathrm{t} \hspace{-0.5ex} 
    \left[ \begin{array}{c}
        \partial_t \tilde{p} +(Fp -p) +\mu p +\lambda p +\omega \cdot \nabla z 
        \\[1ex]
        a \partial_t \tilde{z} +bz -\mathrm{div} \, \bigl( A \nabla z +\nu \nabla z +p \omega \bigr)
    \end{array} \right] \in \mathscr{Y}. 
\end{align}
\begin{remark}\label{Rem.4concl1}
    For any fixed sextet of data $ [a, b, \mu, \lambda, \omega, A] \in \mathscr{S} $, we notice that the operator $ \mathcal{T} = \mathcal{T}(a, b, \mu, \lambda, \omega, A) : \mathscr{X} \longrightarrow \mathscr{Y} $ is bounded and linear, i.e. $ \mathcal{T}  = \mathcal{T}(a, b, \mu, \lambda, \omega, A) \in \mathscr{L}(\mathscr{X}; \mathscr{Y}) $. Indeed, invoking Remark \ref{Rem.sol}, we can calculate that:
    \begin{align*}
        \bigl| \mathcal{T}(a, & b, \mu, \lambda, \omega, A)[p, \tilde{p}, z, \tilde{z}] \bigr|_{\mathscr{Y}}
        \\
        \leq & \bigl| \partial_t \tilde{p} +(Fp -p) +\mu p +(\lambda p +\omega \cdot \nabla z) \bigr|_{\mathscr{V}^*} 
        \\
        & \quad +\bigl| a \partial_t \tilde{z} +bz -\mathrm{div}\, \bigl( A \nabla z +\nu \nabla z +p \omega \bigr) \bigr|_{\mathscr{V}_0^*}
        \\
        \leq & |\partial_t \tilde{p}|_{\mathscr{V}^*} +\bigl( 1 +(C_{V}^{L^4})^2|\mu|_{L^\infty(0, T; H)} +|\lambda|_{L^\infty(Q)} +|\omega|_{L^\infty(Q)^N} \bigr) |p|_{\mathscr{V}}
        \\
        & \quad +(1 +C_{V_0}^{H})(|a|_{L^\infty(Q)} +|\nabla a|_{L^\infty(Q)^N}) |\partial_t \tilde{z}|_{\mathscr{V}_0^*} 
        \\
        & \quad +\bigl( C_{V_0}^{H} |b|_{L^\infty(Q)} +|\omega|_{L^\infty(Q)^N} +|A|_{L^\infty(Q)^{N \times N}} +\nu \bigr) |z|_{\mathscr{V}_0}
        \\
        & \mbox{for any $ [p, \tilde{p}, z, \tilde{z}] \in \mathscr{X} $.}
    \end{align*}
    So, putting:
    \begin{align} \label{defOf_M0}
        M_0 = & M_0( a, b, \mu, \lambda, \omega, A) :=  2 (1 +\nu)(1 +(C_V^{L^4})^2 +C_{V_0}^H) \cdot
        \nonumber
        \\
        &  \cdot \left( \begin{array}{c} 
            1 +|a|_{L^\infty(Q)} +|\nabla a|_{L^\infty(Q)^N} +|b|_{L^\infty(Q)}  +|\mu|_{L^\infty(0, T; H)}
            \\[1ex]
            +|\lambda|_{L^\infty(Q)} +|\omega|_{L^\infty(Q)^N} +|A|_{L^\infty(Q)^{N \times N}} 
        \end{array} \right),
    \end{align}
    it is estimated that:
    \begin{equation}\label{estFinal00}
        \bigl| \mathcal{T}(a, b, \mu, \lambda, \omega, A)[p, \tilde{p}, z, \tilde{z}] \bigr|_{\mathscr{Y}} \leq M_0 \bigl| [p, \tilde{p}, z, \tilde{z}] \bigr|_{\mathscr{X}}, \mbox{ for any $ [p, \tilde{p}, z, \tilde{z}] \in \mathscr{X} $.}
    \end{equation}
    Similarly, putting:
    \begin{equation*}
        [h, k] := \mathcal{T}(a, b, \mu, \lambda, \omega, A)[p, \tilde{p}, z, \tilde{z}] \mbox{ in $ \mathscr{Y} $,} 
    \end{equation*}
    and
    \begin{align}\label{defOf_M1}
        M_1 = & M_1 (a, b, \mu, \lambda, \omega, A) 
        \nonumber
        \\
        := & 2 (1 +\nu)(1 +(C_V^{L^4})^2 +C_{V_0}^H)  \left( 1 +\frac{(1 +C_{V_0}^H)(|a|_{L^\infty(Q)} +|\nabla a|_{L^\infty(Q)^N})}{\delta_*(a)^2} \right)\cdot
        \nonumber
        \\
        &  \cdot \left( \begin{array}{c}
            1 +|b|_{L^\infty(Q)} +|\mu|_{L^\infty(0, T; H)} +|\lambda|_{L^\infty(Q)} +|\omega|_{L^\infty(Q)^N} +|A|_{L^\infty(Q)^{N \times N}} 
        \end{array} \right),  
    \end{align}
    we also see from Remark \ref{Rem.sol} (\hyperlink{R2}{R2}) that:
    \begin{align}\label{estFinal01}
        \bigl| [\partial_t \tilde{p}, \partial_t \tilde{z}] \bigr|_\mathscr{Y} \leq & \left( 1 +\frac{(1 +C_{V_0}^H)(\delta_*(a) +|\nabla a|_{L^\infty(Q)^N})}{\delta_*(a)^2} \right) \cdot 
        \nonumber
        \\
        & \cdot \left( \begin{array}{c}
        \bigl| (Fp -p) +\mu p +\lambda p +\omega \cdot \nabla z -h \bigr|_{\mathscr{V}^*}^2 
            \\[1ex]
            +\bigl| b z -\mathrm{div} \bigl( A \nabla z +\nu \nabla z +p \omega \bigr) -k \bigr|_{\mathscr{V}_0^*}^2 
    \end{array} \right)^{\hspace{-0.5ex}\frac{1}{2}}
        \\
        \leq & M_1 \bigl( |[h, k]|_\mathscr{Y} +|[p, z]|_{\mathscr{V} \times \mathscr{V}_0} \bigr).
        \nonumber
    \end{align}
\end{remark}
Now, the second auxiliary lemma is concerned with the continuous dependence of the value $ \mathcal{T} = \mathcal{T}(a, b, \mu, \lambda, \omega, A) [p, \tilde{p}, z, \tilde{z}] $ with respect to data $ [a, b, \mu, \lambda, \omega, A] \in \mathscr{S} $ and variable $ [p, \tilde{p}, z, \tilde{z}] $.

\begin{keyLemma}\label{axLem02}
    Let $ [a, b, \mu, \lambda, \omega, A] \in \mathscr{S} $ be any sextet of data, and let  $ \mathcal{T} = $ \linebreak $ \mathcal{T}(a, b, \mu, \lambda, \omega, A) \in \mathscr{L}(\mathscr{X}; \mathscr{Y}) $ be the operator as in \eqref{op_A}. Also, let us take a sequence \linebreak $ \{ [a^n, b^n, \mu^n, \lambda^n, \omega^n, A^n] \}_{n = 1}^\infty \subset \mathscr{S} $, and consider a sequence of operators:
\begin{center}
    $ \{ \mathcal{T}^n \}_{n = 1}^\infty := \bigl\{ \mathcal{T}(a^n, b^n, \mu^n, \lambda^n, \omega^n, A^n) \bigr\}_{n = 1}^\infty \subset \mathscr{L}(\mathscr{X}; \mathscr{Y}) $.
\end{center}
Also, let us assume that:
    \begin{subequations}\label{axLem02-02}
        \begin{equation}
            \begin{cases}
                \{ [a^n, \nabla a^n, b^n, \lambda^n, \omega^n, A^n] \}_{n = 1}^\infty \mbox{ is bounded in }
                \\
                \quad L^\infty(Q) \times L^\infty(Q)^N \times L^\infty(Q) \times L^\infty(Q) \times L^\infty(Q)^N \times L^\infty(Q)^{N \times N},
                \\[1ex]
                [a^n, \nabla a^n, b^n, \lambda^n, \omega^n, A^n] \to [a, \nabla a, b, \lambda, \omega, A] 
                \\
                \quad \mbox{in the pointwise sense a.e. in $ Q $, as $ n \to \infty $,}
            \end{cases}
            \label{axLem02-02_a}
        \end{equation}
        \begin{equation}\label{axLem02-02_b}
            \mbox{$ \{ \mu^n \}_{n = 1}^\infty $ is bounded in $ L^\infty(0, T; H) $, and $ \mu^n \to \mu $ in $ \scrH $, as $ n \to \infty $,}
        \end{equation}
\begin{equation}\label{axLem02-04}
\begin{cases}
\mbox{$ [p, \tilde{p}, z, \tilde{z}] \in \mathscr{X} $, $ \{ [p^n, \tilde{p}^n, z^n, \tilde{z}^n] \}_{n = 1}^\infty  \subset \mathscr{X} $,}
\\
[p^n, \tilde{p}^n, z^n, \tilde{z}^n] \to [p, \tilde{p}, z, \tilde{z}]  \mbox{ weakly in $ \mathscr{X} $,}
\\
\mbox{~~~~and } [p^n, z^n] \to [p, z] \mbox{ in $ [\scrH]^2 $, as $ n \to \infty $,}
\end{cases}
\end{equation}
and
\begin{equation}\label{axLem02-03-00}
    \begin{array}{c}
        \ds [h^n, k^n] := \mathcal{T}^n[p^n, \tilde{p}^n, z^n, \tilde{z}^n] \to [\tilde{h}, \tilde{k}] \mbox{ weakly in $ \mathscr{Y} $ as $ n \to \infty $, }
        \\[1ex]
        \mbox{for some $ [\tilde{h}, \tilde{k}] \in \mathscr{Y} $.}
    \end{array}
\end{equation}
    \end{subequations}
Then, it holds that:
\begin{equation}\label{axLem02-03}
    [\tilde{h}, \tilde{k}] = \mathcal{T}[p, \tilde{p}, z, \tilde{z}] = \mathcal{T}(a, b, \mu, \lambda, \omega, A)[p, \tilde{p}, z, \tilde{z}] \mbox{ in $ \mathscr{Y} $.}
\end{equation}
\end{keyLemma}
\begin{proof}
    Let us assume \eqref{axLem02-02}. Then, we may say that:
\begin{equation}\label{axLem02-10-01}
    \begin{cases}
        [p^n, z^n] \to [p, z] \mbox{ in the pointwise sense a.e. in $ Q $,}
        \\
        \mu^n \to \mu \mbox{ in the pointwise sense a.e. in $ Q $,}
        \\
        \mu^n(t) \to \mu(t) \mbox{ in $ H $, for a.e. $ t \in (0, T) $,}
    \end{cases} \mbox{as $ n \to \infty $,}
\end{equation}
by taking a subsequence if necessary. Also, by \eqref{axLem02-02_a}, \eqref{axLem02-04}, and the dominated convergence theorem (cf. \cite[Theorem 10]{MR0492147}), we arrive at:
\begin{subequations}\label{axLem02-10}
    \begin{equation}\label{axLem02-10_a}
        \lambda^n \varphi \to \lambda \varphi \mbox{ in $ \scrH $ and $ \varphi \omega^n \to \varphi \omega $, in $  [\scrH]^N $, for any $ \varphi \in \scrH $, as $ n \to \infty $,}
    \end{equation}
    \begin{equation}\label{axLem02-10_c}
        \begin{array}{c}
            \nabla (a^n \psi) = \psi \nabla a^n +a^n \nabla \psi \to \psi \nabla a +a \nabla \psi = \nabla (a \psi) \mbox{ in $ [\scrH]^N $},
            \\
            \mbox{i.e. } a^n \psi \to a \psi \mbox{ in $ \mathscr{V}_0 $, for any $ \psi \in \mathscr{V}_0 $, as $ n \to \infty $,}
        \end{array}
    \end{equation}
    and
    \begin{equation}\label{axLem02-10_b}
        \begin{array}{c}
            b^n \psi \to b \psi \mbox{ in $ \scrH $, } \omega^n \cdot \nabla \psi \to \omega \cdot \nabla \psi \mbox{ in $ \scrH $},
            \\
            \mbox{and } A^n \nabla \psi \to A \nabla \psi \mbox{ in $ [\scrH]^N $, for any $ \psi \in \mathscr{V}_0 $, as $ n \to \infty $.}
        \end{array}
    \end{equation}
\end{subequations}

Next, let us fix any test function $ [\varphi, \psi] \in \mathscr{V} \times \mathscr{V}_0 $. Then, as a consequence of \eqref{axLem02-02} and \eqref{axLem02-10}, it is observed that:
\begin{subequations}\label{axLem02-11}
\begin{equation}\label{axLem02-11_a}
\langle \partial_t \tilde{p}^n, \varphi \rangle_{\mathscr{V}} \to \langle \partial_t \tilde{p}, \varphi \rangle_{\mathscr{V}},
\end{equation}
\begin{equation}\label{axLem02-11_b}
    \langle Fp^n -p^n, \varphi \rangle_{\mathscr{V}} = ( \nabla p^n, \nabla \varphi )_{[\scrH]^N} \to ( \nabla p, \nabla \varphi )_{[\scrH]^N} = \langle Fp -p, \varphi \rangle_{\mathscr{V}},
\end{equation}
\begin{equation}\label{axLem02-11_c}
    \langle \lambda^n p^n, \varphi \rangle_{\mathscr{V}} = ( p^n, \lambda^n \varphi )_{\scrH} \to ( p, \lambda \varphi)_{\scrH} = \langle \lambda p, \varphi \rangle_{\mathscr{V}},
\end{equation}
\begin{equation}\label{axLem02_d}
    \langle \omega^n \cdot \nabla z^n, \varphi \rangle_{\mathscr{V}} = ( \nabla z^n, \varphi \omega^n )_{[\scrH]^N} \to ( \nabla z, \varphi \omega )_{[\scrH]^N} = \langle \omega \cdot \nabla z, \varphi \rangle_{\mathscr{V}}
\end{equation}
\begin{equation}\label{axLem02-11_e}
    \langle a^n \partial_t \tilde{z}^n, \psi \rangle_{\mathscr{V}_0} = \langle \partial_t \tilde{z}^n, a^n \psi \rangle_{\mathscr{V}_0} \to \langle \partial_t \tilde{z}, a \psi \rangle_{\mathscr{V}_0} = \langle a \partial_t \tilde{z}, \psi \rangle_{\mathscr{V}_0}, 
\end{equation}
\begin{equation}\label{axLem02-11_f}
\langle b^n z^n, \psi \rangle_{\mathscr{V}_0} = ( z^n, b^n \psi )_{\scrH} \to ( z, b \psi )_{\scrH} = \langle b z, \psi \rangle_{\mathscr{V}_0}, 
\end{equation}
\begin{align}\label{axLem02-11_g}
\langle -\mathrm{div} \, & ( A^n \nabla z^n +\nu \nabla z^n ), \psi \rangle_{\mathscr{V}_0} = ( \nabla z^n, A^n \nabla \psi +\nu \nabla \psi )_{[\scrH]^N}
\nonumber
\\
& \to  ( \nabla z, A \nabla \psi +\nu \nabla \psi )_{[\scrH]^N} = \langle -\mathrm{div} \, ( A \nabla z +\nu \nabla z ), \psi \rangle_{\mathscr{V}_0},
\end{align}
and
\begin{align}\label{axLem02-11_h}
\langle -\mathrm{div} \, & (p^n \omega^n), \psi \rangle_{\mathscr{V}_0} = ( p^n, \omega^n \cdot \nabla \psi )_{\scrH} 
\nonumber
\\
& \to ( p, \omega \cdot \nabla \psi )_{\scrH} = \langle -\mathrm{div} \, (p \omega), \psi \rangle_{\mathscr{V}_0}, \mbox{ as $ n\to \infty $.} 
\end{align}
\end{subequations}

Next by invoking \eqref{axLem02-02_b} and Remark \ref{Rem.sol} (\hyperlink{R1}{R1}), we compute that:
\begin{align}\label{axLem02-20}
|\sqrt{\mu^n}(t) & \varphi(t) -\sqrt{\mu}(t) \varphi(t)|_{H} \leq \left[ \int_\Omega |(\mu^n -\mu)(t)||\varphi(t)|^2 \, dx \right]^{\frac{1}{2}}
\nonumber
\\
    & \leq |(\mu^n -\mu)(t)|_H^{\frac{1}{2}} |\varphi(t)|_{L^4(\Omega)} \leq C_V^{L^4} |(\mu^n -\mu)(t)|_{H}^{\frac{1}{2}} |\varphi(t)|_{V} 
\nonumber
\\
& \to 0, \mbox{ as $ n \to \infty $, for a.e. $ t \in (0, T) $,}
\end{align}
and similarly, 
\begin{equation}\label{axLem02-21}
\begin{cases}
    \ds \bigl| (\sqrt{\mu^n} -\sqrt{\mu})(t) \varphi(t) \bigr|_H^2 \leq (C_V^{L^4})^2 \sup_{n \in \bbN} |\mu^n -\mu|_{L^\infty(0, T; H)} |\varphi(t)|_V^2, 
    \\
    \qquad \mbox{ a.e. $ t \in (0, T) $,}
\\[1ex]
\ds \sup_{n \in \bbN} \bigl| \sqrt{\mu^n} p^n \bigr|_{\scrH} \leq C_V^{L^4} \sup_{n \in \bbN} \bigl\{  |\mu^n|_{L^\infty(0, T; H)}^{\frac{1}{2}} |p^n|_{\mathscr{V}} \bigr\} < \infty.
\end{cases}
\end{equation}
Furthermore, by using \eqref{axLem02-02}, \eqref{axLem02-10-01}, \eqref{axLem02-20}, \eqref{axLem02-21}, Lions's lemma (cf. \cite[Lemma 1.3 on page 12]{MR0259693}), and the dominated convergence theorem (cf. \cite[Theorem 10]{MR0492147}), one can see that:
\begin{equation*}
\begin{cases}
\sqrt{\mu^n} \varphi \to \sqrt{\mu} \varphi \mbox{ in $ \scrH $,}
\\[0.5ex]
\sqrt{\mu^n} p^n \to \sqrt{\mu} p \mbox{ weakly in $ \scrH $, as $ n \to \infty $,}
\end{cases}
\end{equation*}
and therefore, 
\begin{align}\label{axLem02-23}
\langle \mu^n p^n, & \varphi \rangle_{\mathscr{V}} = ( \sqrt{\mu^n} p^n, \sqrt{\mu^n} \varphi )_{\scrH}
\nonumber
\\
& \to ( \sqrt{\mu} p, \sqrt{\mu} \varphi )_{\scrH} = \langle \mu p, \varphi \rangle_{\mathscr{V}} \mbox{ as $ n \to \infty $.}
\end{align}

Now, the conclusion \eqref{axLem02-03} is verified by taking into account \eqref{axLem02-03-00}, \eqref{axLem02-11}, and \eqref{axLem02-23}. 
\end{proof}

\begin{keyLemma}\label{axLem03}
    Let us take sextets $ [a^\ell, b^\ell, \mu^\ell, \lambda^\ell, \omega^\ell, A^\ell] \in \mathscr{S} $ and quartets $ [p^\ell, \tilde{p}^\ell, z^\ell, \tilde{z}^\ell] \in \mathscr{X} $, $ \ell = 1, 2 $, and let us set:
\begin{equation*}
\begin{array}{c}
    \ds \mathcal{T}^\ell := \mathcal{T} (a^\ell, b^\ell, \mu^\ell, \lambda^\ell, \omega^\ell, A^\ell) \mbox{ in } \mathscr{L}(\mathscr{X}; \mathscr{Y}), 
\\[1ex]
\mbox{and } [h^\ell, k^\ell] := \mathcal{T}^\ell [p^\ell, \tilde{p}^\ell, z^\ell, \tilde{z}^\ell] \mbox{ in $ \mathscr{Y} $, $ \ell = 1, 2 $.}
\end{array}
\end{equation*}
Besides, let $ \tilde{C}^* = \tilde{C}^*(a^1, b^1, \lambda^1, \omega^1) $ be a positive constant, depending on $ \delta_*(a^1) $, $ |b^1|_{L^\infty(Q)} $, $ |\lambda^1|_{L^\infty(Q)} $, and $ |\omega^1|_{L^\infty(Q)^N} $, which is defined as:
\begin{equation}\label{defOf_tildeC*}
\begin{array}{ll}
    \tilde{C}^* := & \ds \frac{9 (1 +\nu)}{\min \{1, \nu, \delta_*(a^1) \}} \bigl( 1 +(C_V^{L^4})^2 +(C_V^{L^4})^4 +(C_{V_0}^{L^4})^2 \bigr) \cdot
\\[2ex]
& \qquad \cdot \bigl( 1 +|b^1|_{L^\infty(Q)} +|\lambda^1|_{L^\infty(Q)} +|\omega^1|_{L^\infty(Q)^N}^2 \bigr).
\end{array}
\end{equation}
Then, it holds that:
\begin{align}\label{axLem03-01}
    & \bigl\langle \partial_t (\tilde{p}^1 -\tilde{p}^2)(t), (p^1 -p^2)(t) \bigr\rangle_{V} +\bigl\langle \partial_t (\tilde{z}^1 -\tilde{z}^2)(t), a^1(t) (z^1 -z^2)(t) \bigr\rangle_{V_0} 
\nonumber
\\
& ~~ +\frac{1}{2} |(p^1 -p^2)(t)|_{V}^2 +\frac{\nu}{2} |(z^1 -z^2)(t)|_{V_0}^2 
\nonumber
\\
\leq & \tilde{C}^* \left( 
    \ds |(p^1 -p^2)(t)|_H^2 +|\sqrt{\mbox{\small$ a^1(t) $}}(z^1 -z^2)(t)|_H^2  
\right) 
\\[1ex]
& ~~  +\tilde{C}^* \bigl( |(h^1 -h^2)(t)|_{V^*}^2 +|(k^1 -k^2)(t)|_{V_0^*}^2 +\tilde{R}^*(t) \bigr), 
\nonumber
\\[1ex]
& ~~ \mbox{for a.e. $ t \in (0, T) $,}
\nonumber
\end{align}
where 
\begin{align}\label{defOf_tildeR*}
    & \tilde{R}^*(t) := |\partial_t \tilde{z}^2(t)|_{V_0^*}^2 \bigl( |a^1 -a^2|_{L^\infty({Q})}^2 +|\nabla (a^1 -a^2)(t)|_{L^4(\Omega)^N}^2 \bigr)
\nonumber
\\[1ex]
& \quad +|p^2(t)|_{V}^2 \bigl( |(\mu^1 -\mu^2)(t)|_{H}^2 +|(\omega^1 -\omega^2)(t)|_{L^4(\Omega)^N}^2 \bigr) 
\nonumber
\\[1ex]
& \quad +|z^2(t)|_{V_0}^2 |(b^1 -b^2)(t)|_{L^4(\Omega)}^2 +|p^2(t) (\lambda^1 -\lambda^2)(t)|_H^2  
\nonumber
\\[1ex]
& \quad +|\nabla z^2(t) \cdot (\omega^1 -\omega^2)(t)|_H^2 +|(A^1 -A^2)(t) \nabla z^2(t)|_{[H]^N}^2,
    \\[1ex]
&  \mbox{ for  a.e. $ t \in (0, T) $.}
\nonumber
\end{align}
\end{keyLemma}
\begin{proof}
The conclusion \eqref{axLem03-01} will be deduced using the following identity:
\begin{align*}
\ds \bigl\langle \mathcal{T}^1[p^1, & \tilde{p}^1, z^1, \tilde{z}^1](t) -\mathcal{T}^2 [p^2, \tilde{p}^2, z^2, \tilde{z}^2](t), \bigl[ (p^1 -p^2)(t), (z^1 -z^2)(t) \bigr] \bigr\rangle_{V \times V_0}
\\
= & \bigl\langle \bigl[ (h^1 -h^2)(t), (k^1 -k^2)(t) \bigr], \bigl[ (p^1 -p^2)(t), (z^1 -z^2)(t) \bigr] \bigr\rangle_{V \times V_0}, 
\\
& \mbox{a.e. $ t \in (0, T) $.}
\end{align*}
Then, the constant  $ \tilde{C}^* = \tilde{C}^*(a^1, b^1, \lambda^1, \omega^1) > 0 $, as in \eqref{defOf_tildeC*}, and the function $ \tilde{R}^* : (0, T) \longrightarrow \bbR $, as in \eqref{defOf_tildeR*}, can be derived based on the following computations, after using the estimates, as in Remark \ref{Rem.sol} (\hyperlink{R1}{R1}), and H\"{o}lder's and Young's inequalities:
\begin{align*}
\bigl\langle (Fp^1 & -p^1)(t) -(Fp^2 -p^2)(t), (p^1 -p^2)(t) \bigr\rangle_{V} 
\nonumber
\\
= & |(p^1 -p^2)(t)|_V^2 -|(p^1 -p^2)(t)|_H^2, 
\mbox{ a.e. $ t \in (0, T) $;}
\end{align*}
\begin{align*}
    & \hspace{-6ex} \bigl\langle \bigl( \mu^1 p^1 -\mu^2 p^2 \bigr)(t), (p^1 -p^2)(t) \bigr\rangle_{V} = \bigl| \sqrt{\mu^1(t)}(p^1 -p^2)(t) \bigr|_{H}^2 +I_1(t),
\\[1ex]
    \mbox{with} \hspace{10ex} &
    \\
    I_1(t) := & \int_\Omega p^2(t) (\mu^1 -\mu^2)(t)(p^1 -p^2)(t) \, dx 
\\
\leq & (C_V^{L^4})^2 |(\mu^1 -\mu^2)(t)|_H |p^2(t)|_{V} |(p^1 -p^2)(t)|_{V}
\\
\leq & \frac{1}{4} |(p^1 -p^2)(t)|_V^2 +(C_V^{L^4})^4 |p^2(t)|_V^2 |(\mu^1 -\mu^2)(t)|_H^2, 
\\
& \qquad \mbox{a.e. $ t \in (0, T) $;} 
\end{align*}
\begin{align*}
    & \hspace{-6ex} \bigl\langle ( \lambda^1 p^1 -\lambda^2 p^2 )(t), (p^1 -p^2)(t) \bigr\rangle_{V} = I_2(t) +I_3(t),
\\[1ex]
    \mbox{with} \hspace{10ex} &
    \\
    I_2(t) := & \int_\Omega \lambda^1(t)|(p^1 -p^2)(t)|^2 \, dx \leq |\lambda^1|_{L^\infty(Q)} |(p^1 -p^2)(t)|_H^2,
\\[1ex]
    I_3(t) := & \int_\Omega p^2(t) (\lambda^1 -\lambda^2)(t) (p^1 -p^2)(t) \, dx 
\\
\leq & \frac{1}{2} |(p^1 -p^2)(t)|_H^2 +\frac{1}{2}\bigl| p^2(t)(\lambda^1 -\lambda^2)(t) \bigr|_{H}^2, \mbox{ a.e. $ t \in (0, T) $;} 
\end{align*}
\begin{align*}
    & \hspace{-6ex} \bigl\langle ( \omega^1 \cdot \nabla z^1 -\omega^2 \cdot \nabla z^2 )(t), (p^1 -p^2)(t) \bigr\rangle_{V} = I_4(t) +I_5(t), 
\\[1ex]
    \mbox{with} \hspace{10ex} &
    \\
    I_4(t) := & \int_\Omega \bigl( \omega^1 \cdot \nabla (z^1 -z^2) \bigr)(t) (p^1 -p^2)(t) \, dx 
\\
\leq & |\omega^1(t)|_{L^\infty(\Omega)^N} |\nabla (z^1 -z^2)(t)|_{[H]^N} |(p^1 -p^2)(t)|_H
\\
    \leq & \frac{\nu}{16} |(z^1 -z^2)(t)|_{V_0}^2 +\frac{4}{\nu} |\omega^1|_{L^\infty(Q)^N}^2 |(p^1 -p^2)(t)|_H^2,
\\[1ex]
I_5(t) := & \int_\Omega \bigl((\omega^1 -\omega^2) \cdot \nabla z^2 \bigr)(t) (p^1 -p^2)(t) \, dx
\\
\leq & \frac{1}{2} |(p^1 -p^2)(t)|_H^2 +\frac{1}{2}\bigl| \nabla z^2(t) \cdot (\omega^1 -\omega^2)(t) \bigr|_{H}^2, \mbox{ a.e. $ t \in (0, T) $;} 
\end{align*}
\begin{align*}
\bigl\langle (h^1 & -h^2)(t), (p^1 -p^2)(t) \bigr\rangle_{V} 
\\
& \leq \frac{1}{4} |(p^1 -p^2)(t)|_V^2 +|(h^1 -h^2)(t)|_{V^*}^2, \mbox{ a.e. $ t \in (0, T) $;}
\end{align*}
\begin{align*}
& \hspace{-6ex} \bigl\langle (a^1 \partial_t \tilde{z}^1)(t) -(a^2 \partial_t \tilde{z}^2)(t), (z^1 -z^2)(t) \bigr\rangle_{V_0}
\\
    = & \bigl\langle \partial_t (\tilde{z}^1 -\tilde{z}^2)(t), a^1(t) (z^1 -z^2)(t) \bigr\rangle_{V_0} +J_1(t)
\\
    \mbox{with} \hspace{10ex} &
    \\
    J_1(t) := & \bigl\langle \partial_t \tilde{z}^2(t), (a^1 -a^2)(t) (z^1 -z^2)(t) \bigr\rangle_{V_0} 
\\[1ex]
    \leq & |\partial_t \tilde{z}^2(t)|_{V_0^*} \bigl| \nabla \bigl( (a^1 -a^2)(z^1 -z^2)\bigr)(t) \bigr|_{[H]^N}
\\[1ex]
\leq & C_{V_0}^{L^4} |\partial_t \tilde{z}^2(t)|_{V_0^*} |\nabla (a^1 -a^2)(t)|_{L^4(\Omega)^N} |(z^1 -z^2)(t)|_{V_0}
\\[1ex]
& \quad +|\partial_t \tilde{z}^2(t)|_{V_0^*} |a^1 -a^2|_{L^\infty({Q})} |(z^1 -z^2)(t)|_{V_0}
\\[1ex]
    \leq & \frac{\nu}{8} |(z^1 -z^2)(t)|_{V_0}^2 +\frac{4}{\nu} |\partial_t \tilde{z}^2(t)|_{V_0^*}^2 |a^1 -a^2|_{L^\infty({Q})}^2
\\[1ex]
    & \quad +\frac{4 (C_{V_0}^{L^4})^2}{\nu} |\partial_t \tilde{z}^2(t)|_{V_0^*}^2 |\nabla (a^1 -a^2)(t)|_{L^4(\Omega)^N}^2, \mbox{ a.e. $ t \in (0, T) $;} 
\end{align*}
\begin{align*}
    & \hspace{-6ex} \bigl\langle b^1(t) z^1(t) -b^2(t) z^2(t), (z^1 -z^2)(t) \bigr\rangle_{V_0} = J_2(t) +J_3(t)
\\[1ex]
    \mbox{with} \hspace{10ex} &
    \\
    J_2(t) := & \int_\Omega b^1(t) |(z^1 -z^2)(t)|^2 \, dx  \leq |b^1|_{L^\infty(Q)} |(z^1 -z^2)(t)|_H^2
    \\
    \leq & \frac{|b^1|_{L^\infty(Q)}}{\delta_*(a^1)}|\sqrt{\mbox{\small$a^1(t)$}}(z^1 -z^2)(t)|_H^2, 
\end{align*}
\begin{align*}
    J_3(t) := & \int_\Omega z^2(t) (b^1 -b^2)(t) (z^1 -z^2)(t) \, dx 
    \\
    \leq & \frac{1}{\sqrt{\delta_*(a^1)}} |\sqrt{\mbox{\small$a^1(t)$}}(z^1 -z^2)(t)|_{H} \cdot C_{V_0}^{L^4}|z^2(t)|_{V_0} |(b^1 -b^2)(t)|_{L^4(\Omega)}
    \\
    \leq & \frac{1}{2 \delta_*(a^1)} |\sqrt{\mbox{\small$a^1(t)$}}(z^1 -z^2)(t)|_{H}^2 +\frac{(C_{V_0}^{L^4})^2}{2} |z^2(t)|_{V_0}^2|(b^1 -b^2)(t)|_{L^4(\Omega)}^2, 
    \\
    & \qquad \mbox{ a.e. $ t \in (0, T) $;} 
\end{align*}
\begin{align*}
    & \hspace{-6ex} -\bigl\langle \mathrm{div} \, \bigl( (A^1 \nabla z^1 +\nu \nabla z^1) -(A^2 \nabla z^2 +\nu \nabla z^2) \bigl) (t), (z^1 -z^2)(t) \bigr\rangle_{V_0}
    \\
    = & \bigl| [A^1]^{\frac{1}{2}}\nabla(z^1 -z^2)(t) \bigr|_{[H]^N}^2 +\nu |(z^1 -z^2)(t)|_{V_0}^2 +J_4(t) 
    \\[1ex]
    \mbox{with} \hspace{10ex} &
    \\
    J_4(t) := & \int_\Omega \bigl( (A^1 -A^2) \nabla z^2 \bigr) (t) \cdot \nabla (z^1 -z^2)(t) \, dx
    \\
    \leq & \frac{\nu}{16} |(z^1 -z^2)(t)|_{V_0}^2 +\frac{4}{\nu} |(A^1 -A^2)(t) \nabla z^2(t)|_{[H]^N}^2, \mbox{ a.e. $ t \in (0, T) $;} 
\end{align*}
\begin{align*}
    & \hspace{-6ex} -\bigl\langle \mathrm{div} \, \bigl( p^1 \omega^1 -p^2 \omega^2 \bigl) (t), (z^1 -z^2)(t) \bigr\rangle_{V_0} = J_5(t) +J_6(t),
    \\[1ex]
    \mbox{with} \hspace{10ex} &
    \\
    J_5(t) := & \int_\Omega (p^1 -p^2)(t) \bigl( \omega^1 \cdot \nabla (z^1 -z^2) \bigr)(t) \, dx
    \\
    \leq & \frac{\nu}{16} |(z^1 -z^2)(t)|_{V_0}^2 +\frac{4}{\nu}|\omega^1|_{L^\infty(Q)^N}^2 |(p^1 -p^2)(t)|_H^2,
    \\[1ex]
    J_6(t) := & \int_\Omega p^2(t) \bigl( (\omega^1 -\omega^2) \cdot \nabla (z^1 -z^2) \bigr)(t) \, dx
    \\
    \leq & C_{V}^{L^4} |\nabla (z^1 -z^2)(t)|_{[H]^N} |p^2(t)|_{V} |(\omega^1 -\omega^2)(t)|_{L^4(\Omega)^N}
    \\
    \leq & \frac{\nu}{16} |(z^1 -z^2)(t)|_{V_0}^2 +\frac{4 (C_{V}^{L^4})^2}{\nu} |p^2(t)|_{V}^2 |(\omega^1 -\omega^2)(t)|_{L^4(\Omega)^N}^2,
    \\
    & \qquad \mbox{ a.e. $ t \in (0, T) $;} 
\end{align*}
and
\begin{align*}
    \bigl\langle (k^1 & -k^2)(t), (z^1 -z^2)(t) \bigr\rangle_{V_0} 
    \\
    & \leq \frac{\nu}{8} |(z^1 -z^2)(t)|_{V_0}^2 +\frac{2}{\nu} |(k^1 -k^2)(t)|_{V_0^*}^2, \mbox{ a.e. $ t \in (0, T) $.}
\end{align*}
Thus, the proof is finished.
\end{proof}

\section{Proofs of Main Theorems}\label{04Proofs}

This Section is devoted to the proofs of Main Theorems \hyperlink{mainThrm01}{1} and \ref{mainThrm2}. The proofs are divided in four parts, listed below: 
\begin{description}
\item[\textrm{\textmd{$\S$\,\ref{subSec4.1}}}]Proof of Main Theorem \ref{mainThrm2} (\hyperlink{II-A}{II-A});
\item[\textrm{\textmd{$\S$\,\ref{subSec4.2}}}]Proof of existence in Main Theorem \hyperlink{mainThrm01}{1} (\hyperlink{I-A}{I-A});
\item[\textrm{\textmd{$\S$\,\ref{subSec4.3}}}]Proofs of continuous dependence in Main Theorem \hyperlink{mainThrm01}{1} (\hyperlink{I-B}{I-B}) and uniqueness in (\hyperlink{I-A}{I-A});
\item[\textrm{\textmd{$\S$\,\ref{subSec4.4}}}]Proof of Main Theorem \ref{mainThrm2} (\hyperlink{II-B}{II-B}).
\end{description}

\subsection{Proof of Main Theorem \ref{mainThrm2} (II-A)} \label{subSec4.1}

Let us set the constant $ \tau_0 = \tau_0(a, b, \lambda, \omega) \in (0, 1) $, given in \eqref{axLem01-03}, as the required constant $ \tau_* = \tau_*(a, b, \lambda, \omega) $ in this Section. Let us fix any time step-size $ \tau \in (0, \tau_*) $.  Since the value of constant $ \tau_* $ is independent of the time-index $ i \in \bbN $ and the choice of given data as in \eqref{axLem01-01}, the solution $ \{ [p_i, z_i]  \}_{i = 0}^\infty $ to the time-discrete scheme (\hyperlink{DP}{DP})$_\tau$ is obtained, by applying Key-Lemma \ref{axLem01} to the system \eqref{DP_1st} and \eqref{DP_2nd}, inductively, for every $ i \in \bbN $. \hfill $ \Box $ 

\subsection{Proof of existence in Main Theorem \ref{mainThrm1} (I-A)} \label{subSec4.2}

For simplicity of description, let us set:
\begin{equation*}
    \Delta_i^T := (t_{i -1}, t_i) \cap (0, T), \mbox{ for $ i = 1, 2, 3, \dots $,} 
\end{equation*}
and
\begin{equation*}
    \mathcal{T}_\tau := \mathcal{T}\bigl( [\overline{a}]_\tau, [\overline{b}]_\tau, [\overline{\mu}]_\tau, [\overline{\lambda}]_\tau, [\overline{\omega}]_\tau, [\overline{A}]_\tau \bigr) \mbox{ in $ \mathscr{L}(\mathscr{X}; \mathscr{Y}) $, for $ \tau \in (0, \tau_*) $.}
\end{equation*}
Then, invoking the definitions \eqref{DP_1st}, \eqref{DP_2nd}, and \eqref{op_A}, we can reformulate the time-discretization scheme (\hyperlink{DP}{DP})$_\tau$ as the following linear equation:
\begin{equation}\label{mThIA-00}
    \begin{cases}
        \mathcal{T}_\tau \bigl[[\overline{p}]_\tau, [{p}]_\tau, [\overline{z}]_\tau, [{z}]_\tau \bigr] = \bigl[ [\overline{h}]_\tau, [\overline{k}]_\tau \bigr] \mbox{ in } \mathscr{Y}, 
        \\[1ex]
        \bigl[ [p]_\tau(0), [z]_\tau(0) \bigr] = [p_0, z_0] \mbox{ in $ [H]^2 $,}
    \end{cases} \mbox{for $ \tau \in (0, \tau_*) $.}
\end{equation}

Next, for any $ \tau \in (0, \tau_*) $, let us denote by $ \tilde{C}_\tau^* $ the constant $ \tilde{C}^* $ as in \eqref{defOf_tildeC*}, corresponding to the case $ [a^1, b^1, \lambda^1, \omega^1] = \bigl[ [\overline{a}]_\tau, [\overline{b}]_\tau, [\overline{\lambda}]_\tau, [\overline{\omega}]_\tau \bigr] $. Also, let $ C_0^* $ be the constant $ C^* $ as in \eqref{defOf_C*} corresponding to the case $ [a^1, b^1, \lambda^1, \omega^1] = [a, b,  \lambda, \omega] $. Notice that under the setting of \eqref{D-given_a}, \eqref{D-given_b}, \eqref{defOf_C*}, and \eqref{defOf_tildeC*}, the constant $ C_0^* $ is a uniform upper-bound for the sequence $ \{ \tilde{C}_\tau^* \}_{\tau \in (0, 1)} $, more precisely:
\begin{align}
    C_0^* -\tilde{C}_\tau^* & \,  \geq \frac{9 (1 +\nu)}{\min \{1, \nu, \delta_*(a) \}} |a|_{W^{1, \infty}(Q)} \geq 0, \mbox{ for any $ \tau \in (0, \tau_*) $.}
    \label{mThIIB-02-01}
\end{align}

Next, let us apply Key-Lemma \ref{axLem03} to the following case:
\begin{equation*}
\begin{cases}
    \mathcal{T}^{\ell} = \mathcal{T}_\tau \mbox{ in $ \mathscr{L}(\mathscr{X}; \mathscr{Y}) $, $ \ell = 1, 2 $,}
\\[1ex]
[p^1, \tilde{p}^1, z^1, \tilde{z}^1] = \bigl[[\overline{p}]_\tau, [{p}]_\tau, [\overline{z}]_\tau, [{z}]_\tau \bigr]
\\
\qquad \mbox{and } [p^2, \tilde{p}^2, z^2, \tilde{z}^2] = [0, 0, 0, 0] \mbox{ in $ \mathscr{X} $,} 
\\[1ex]
[h^1, k^1] = \bigl[ [\overline{h}]_\tau, [\overline{k}]_\tau \bigr] \mbox{ and } [h^2, k^2] = [0, 0] \mbox{ in $ \mathscr{Y} $,}
\end{cases} \mbox{ for $ \tau \in (0, \tau_*) $.}
\end{equation*}
Then, from \eqref{axLem03-01} and \eqref{mThIIB-02-01}, we deduce that:
\begin{align} \label{mThIA-01}
\bigl< \partial_t [p]_\tau & (t), [\overline{p}]_\tau(t) \bigr>_{V} +\bigl< \partial_t [z]_\tau(t), [\overline{a}]_\tau(t) [\overline{z}]_\tau(t) \bigr>_{V_0} 
\nonumber
\\
& +\frac{1}{2} |[\overline{p}]_\tau(t)|_{V}^2 +\frac{\nu}{2} |[\overline{z}]_\tau(t)|_{V_0}^2
\nonumber
\\
    \leq & C_0^* \bigl( |[\overline{p}]_\tau(t)|_{H}^2 +|\sqrt{[\overline{a}]_\tau(t)}[\overline{z}]_\tau(t)|_{H}^2 \bigr)
\nonumber
\\
    & +C_0^* \bigl( |[\overline{h}]_\tau(t)|_{V^*}^2 +|[\overline{k}]_\tau(t)|_{V_0^*}^2 \bigr), \mbox{ for a.e. $ t \in (0, T) $.}
\end{align}
Also, since:
\begin{align*}
\bigl< \partial_t [p]_\tau & (t), [\overline{p}]_\tau(t) \bigr>_{V} +\bigl< \partial_t [z]_\tau(t), [\overline{a}]_\tau(t) [\overline{z}]_\tau(t) \bigr>_{V_0}
\\
\geq & \frac{1}{2 \tau} \int_\Omega \bigl( |p_i|^2 -|p_{i -1}|^2 \bigr) \, dx +\frac{1}{2 \tau} \int_\Omega \bigl( a_{i} |z_i|^2 -a_{i -1} |z_{i -1}|^2 \bigr) \, dx
\\
& \quad -\frac{1}{2 \tau} \int_\Omega |a_{i} -a_{i -1}| |z_{i -1}|^2 \, dx,
\\
\geq & \frac{1}{2 \tau} \bigl( |p_i|_H^2 -|p_{i -1}|_H^2 \bigr) +\frac{1}{2 \tau} \bigl( |\sqrt{a_{i}} z_i|_H^2 -|\sqrt{a_{i -1}} z_{i -1}|_H^2 \bigr)
\\
& \quad -\frac{|\partial_t [a]_\tau|_{L^\infty(Q)}}{2 \delta_*(a)} |\sqrt{a_{i -1}} z_{i -1}|_H^2
\\
    \geq & \frac{1}{2 \tau} \bigl( |p_i|_H^2 -|p_{i -1}|_H^2 \bigr) +\frac{1}{2 \tau} (1 +C_0^* \tau) \bigl( |\sqrt{a_{i}} z_i|_H^2 -|\sqrt{a_{i -1}} z_{i -1}|_H^2 \bigr)
\\
    & \quad -\frac{C_0^*}{2} |\sqrt{a_{i}} z_i|_H^2, \mbox{ for any $ t \in \Delta_i^T $, $ i = 1, 2, 3, \dots $, } 
\end{align*}
the inequality \eqref{mThIA-01} can be reduced to:
\begin{align} \label{mThIA-02}
    \bigl( |p_i|_H^2 & -|p_{i -1}|_H^2 \bigr) +(1 +C_0^* \tau)\bigl( |\sqrt{a_{i}} z_i|_H^2 -|\sqrt{a_{i -1}} z_{i -1}|_H^2 \bigr) 
\nonumber
\\[1ex]
& +\tau \bigl( |p_i|_{V}^2 +\nu |z_i|_{V_0}^2 \bigr) 
\nonumber
\\[1ex]
    \leq 3 C_0^*  \tau & \bigl( |p_i|_{H}^2 +|\sqrt{a_{i}} z_i|_{H}^2 \bigr) +2 C_0^* \tau \bigl( |h_i|_{V^*}^2 +|k_i|_{V_0^*}^2 \bigr), 
\\[1ex]
    & \mbox{for any $ \tau \in (0, \tau_*) $, and $ i = 1, 2, 3, \dots $.}
\nonumber
\end{align}
Here, let us take $ \delta_0 \in (0, \tau_*) $ so small to satisfy:
\begin{equation} \label{tau0}
    \delta_0 < \frac{1}{6 C_0^*}, \mbox{ and in particular, } \frac{1}{1 -3 C_0^* \delta_0} < 2,
\end{equation}
and having in mind \eqref{timeInterp}, \eqref{D-given}, \eqref{mThIIB-02-01}, and \eqref{tau0}, let us apply the discrete version of Gronwall's lemma \cite[Section 3.1]{emmrich1999discrete} to \eqref{mThIA-02}. Then, it is observed that:
\begin{align} \label{mThIA-03}
    & |[\overline{p}]_\tau(t)|_H^2 +\delta_*(a) |[\overline{z}]_\tau(t)|_H^2 +\int_0^t \bigl( |[\overline{p}]_\tau(\varsigma)|_{V}^2 +\nu |[\overline{z}]_\tau(\varsigma)|_{V_0}^2 \bigr) \, d \varsigma 
\nonumber
\\
    \leq & |p_i|_H^2 +(1 +C_0^* \tau)|\sqrt{a_{i}} z_i|_H^2 +\tau \sum_{j = 1}^{i} \bigl( |p_j|_{V}^2 +\nu |z_j|_{V_0}^2  \bigr)
\nonumber
\\
    \leq & e^{\frac{3 C_0^* t_i}{1 -3 C_0^* \tau}} \left[ \bigl( |p_0|_H^2 +(1 +C_0^* \tau)|\sqrt{a_{0}} z_0|_H^2 \bigr) +2 C_0^* \tau \sum_{j = 1}^{i} \bigl( |h_j|_{V^*}^2 +|k_j|_{V_0^*}^2 \bigr) \right]
\nonumber
\\
    \leq & 2 (1 +C_0^* +|a|_{L^\infty(Q)}) e^{6 C_0^* T +1} \left[ \bigl( |p_0|_H^2 +|z_0|_H^2 \bigr) +\bigl| \bigl[ [\overline{h}]_\tau, [\overline{k}]_\tau \bigr] \bigr|_{\mathscr{Y}}^2 \right],
\nonumber
\\
    & \mbox{for all $ \tau \in (0, \delta_0) $, $ t \in \Delta_i^T $, and $ i = 1, 2, 3, \dots $.}
\end{align}
Hence, putting:
\begin{equation*}
    C_1^* := \left( \frac{2 \bigl( 1 +C_0^* +|a|_{L^\infty(Q)} \bigr) e^{6 C_0^* T +1}}{\min \{ 1, \nu, \delta_*(a) \}} \right)^{\frac{1}{2}},
\end{equation*}
it is deduced from \eqref{rxForces01} and \eqref{mThIA-03} that:
\begin{equation}\label{mThIA-10}
\hspace{-2ex}\begin{cases}
    \ds \sup_{\tau \in (0, \tau_0)} \left\{ \bigl| \bigl[ [\overline{p}]_\tau, [\overline{z}]_\tau \bigr] \bigr|_{L^\infty(0, T; H)^2}, \bigl| \bigl[ [\underline{p}]_\tau, [\underline{z}]_\tau \bigr] \bigr|_{L^\infty(0, T; H)^2}, \bigl| \bigl[ [{p}]_\tau, [{z}]_\tau \bigr] \bigr|_{C([0, T]; H)^2} \right\}
\\[1ex]
    \qquad \leq C_1^* \bigl( |[p_0, z_0]|_{[H]^2} +\bigl| \bigl[ [\overline{h}]_\tau, [\overline{k}]_\tau \bigr] \bigr|_{\mathscr{Y}} \bigr) \leq C_1^*  \bigl( |[p_0, z_0]|_{[H]^2} +K^* \bigr),
\\[1ex]
    \ds \sup_{\tau \in (0, \tau_0)} \bigl| \bigl[ [\overline{p}]_\tau, [\overline{z}]_\tau \bigr] \bigr|_{\mathscr{V} \times \mathscr{V}_0}  \leq C_1^*  \bigl( |[p_0, z_0]|_{[H]^2} +\bigl| \bigl[ [\overline{h}]_\tau, [\overline{k}]_\tau \bigr] \bigr|_{\mathscr{Y}} \bigr)
\\[1ex]
    \ds\qquad \leq C_1^*  \bigl( |[p_0, z_0]|_{[H]^2} +K^* \bigr). 
\end{cases}
\end{equation}
Meanwhile, in the light of the estimate \eqref{estFinal01} as in Remark \ref{Rem.4concl1}, it is inferred that: 
\begin{align*}
    \bigl| \bigl[ \partial_t [p]_\tau, \partial_t [z]_\tau & \bigr] \bigr|_{\mathscr{Y}} \leq M_{1, \tau} \bigl( \bigl| \bigl[ [\overline{h}]_\tau, [\overline{k}]_\tau \bigr] \bigr|_{\mathscr{Y}} +\bigl| \bigl[ [\overline{p}]_\tau, [\overline{z}]_\tau \bigr] \bigr|_{\mathscr{V} \times \mathscr{V}_0} \bigr)
\nonumber
\\
\leq ~ & M_{1, \tau} \left( C_1^* \bigl( |[p_0, z_0]|_{[H]^2} +K^* \bigr) +K^* \right), \mbox{ for all $ \tau \in (0, \delta_0) $,} 
\end{align*}
where for any $ \tau \in (0, \delta_0) $, $ M_{1, \tau} $ is the constant given in \eqref{defOf_M1} corresponding to the case $ [a, b, \mu, \lambda, \omega, A] = \big[ [\overline{a}]_\tau, [\overline{b}]_\tau, [\overline{\mu}]_\tau, [\overline{\lambda}]_\tau, [\overline{\omega}]_\tau, [\overline{A}]_\tau \bigr] $. Additionally, invoking the settings \eqref{D-given} and \eqref{defOf_M1}, we estimate that:
    \begin{align*}
        M_{1, \tau} := & M_1\bigl(  [\overline{a}]_\tau, [\overline{b}]_\tau, [\overline{\mu}]_\tau, [\overline{\lambda}]_\tau, [\overline{\omega}]_\tau, [\overline{A}]_\tau \bigr) \leq M_1(a, b, \mu, \lambda, \omega, A) =: M_1,
    \end{align*}
    and
\begin{align}\label{mThIA-13}
    \bigl| \bigl[ \partial_t [p]_\tau, \partial_t [z]_\tau & \bigr] \bigr|_{\mathscr{Y}} \leq M_1 (1 +C_1^* +K^*)^2 \bigl( 1 +|[p_0, z_0]|_{[H]^2} \bigr), \mbox{ for all $ \tau \in (0, \delta_0) $.} 
\end{align}

Now, referring to the general theories of compactness (cf. \cite{MR0916688,MR1375237}), e.g. theories of Aubin's type \cite[Corollary 4]{MR0916688}, Arzer\`{a}--Ascoli \cite[Theorem 1.3.1]{MR1375237}, Alaoglu--Bourbaki--Kakutani \cite[Theorem 1.2.5]{MR1375237}, and so on, it is ensured that the estimates \eqref{mThIA-10} and \eqref{mThIA-13} imply the existence of a pair of functions $ [p, z] \in [\scrH]^2 $, together with a sequence:
\begin{subequations}\label{mThIA-20}
\begin{equation}\label{mThIA-20_a}
\delta_0 > \tau_1 > \tau_2 > \cdots > \tau_n \downarrow 0 \mbox{ as $ n \to \infty $;}
\end{equation}
fulfilling:
    \begin{align}\label{mThIA-20_b}
    \bigl[ [\overline{p}]_{\tau_n}, [\overline{z} & ]_{\tau_n} \bigr] \to [p, z] \mbox{ weakly in $ \mathscr{V} \times \mathscr{V}_0 $,}
    \nonumber
    \\
    & \mbox{and weakly-$*$ in $ L^\infty(0, T; H)^2 $,}
\end{align}
\begin{align}\label{mThIA-20_c}
\bigl[ [{p}]_{\tau_n}, & [{z}]_{\tau_n} \bigr] \to [p, z] \mbox{ in $ C([0, T]; V^*) \times C([0, T]; V_0^*) $,}
\nonumber
\\
& \mbox{in $ L^2(\delta, T; H)^2 $, for any $ \delta \in (0, T) $,} 
\nonumber
\\
& \mbox{weakly in $ W^{1, 2}(0, T; V^*) \times W^{1, 2}(0, T; V_0^*) $,} 
\nonumber
\\
    & \mbox{and weakly-$*$ in $ L^\infty(0, T; H)^2 $, as $ n \to \infty $,} 
\end{align}
and in particular, 
    \begin{equation}\label{mThIA-20_d}
        [p(0), z(0)] = \bigl[ [p]_{\tau_n}(0), [z]_{\tau_n}(0) \bigr] = [p_0, z_0] \mbox{ in $ [H]^2 $, for $ n = 1,2, 3, \dots $.}
    \end{equation}
    Here, from \eqref{mThIA-10} and \eqref{mThIA-20_c}, it follows that:
    \begin{align*}
        & ~ \varlimsup_{n \to \infty} \bigl| \bigl[ [p]_{\tau_n}, [z]_{\tau_n} \bigr] -[p, z] \bigr|_{[\scrH]^2}^2 
        \\
        \leq & ~ \delta \cdot \sup_{n \in \bbN} \bigl| \bigl[ [p]_{\tau_n}, [z]_{\tau_n} \bigr] -[p, z] \bigr|_{L^\infty(0, T; H)^2}^2 +\varlimsup_{n \to \infty} \bigl| \bigl[ [p]_{\tau_n}, [z]_{\tau_n} \bigr] -[p, z] \bigr|_{L^2(\delta, T; H)^2}^2 
        \\
        \leq & ~ \delta \cdot 4(C_1^*)^2 \bigl( |[p_0, z_0]|_{[H]^2} +K^* \bigr)^2, \mbox{ for any $ \delta \in (0, T) $.}
    \end{align*}
    Thus, 
    \begin{align}\label{mThIA-20_e}
        \bigl[ [{p}]_{\tau_n}, & [{z}]_{\tau_n} \bigr] \to [p, z] \mbox{ in $ [\scrH]^2 $, as $ n \to \infty $,}
    \end{align}
    and moreover, the estimate \eqref{mThIA-10} allows us to derive:
    \begin{align}\label{mThIA-20_f}
        \bigl[ [\overline{p}]_{\tau_n}, & [\overline{z}]_{\tau_n} \bigr] \to [p, z] \mbox{ and } \bigl[ [\underline{p}]_{\tau_n}, [\underline{z}]_{\tau_n} \bigr] \to [p, z], \mbox{ in $ [\scrH]^2 $, as $ n \to \infty $.}
    \end{align}
\end{subequations}

\noindent
On account of \eqref{D-given}, \eqref{mThIA-00}, \eqref{mThIA-20}, (\hyperlink{A1}{A1}), Remark \ref{Rem.t-discrete}, and the dominated convergence theorem (cf. \cite[Theorem 10]{MR0492147}), it is checked that the following sequences:
\begin{equation*}
    \begin{cases}
        \{ [a^n, b^n, \mu^n, \lambda^n, \omega^n, A^n] \}_{n = 1}^\infty := \bigl\{ \bigl[ [\overline{a}]_{\tau_n}, [\overline{b}]_{\tau_n}, [\overline{\mu}]_{\tau_n}, [\overline{\lambda}]_{\tau_n}, [\overline{\omega}]_{\tau_n}, [\overline{A}]_{\tau_n} \bigr] \bigr\}_{n = 1}^\infty, 
        \\ 
        \{ [p^n, \tilde{p}^n, z^n, \tilde{z}^n] \}_{n = 1}^\infty := \bigl\{ \bigl[ [\overline{p}]_{\tau_n}, [p]_{\tau_n}, [\overline{z}]_{\tau_n}, [z]_{\tau_n} \bigr] \bigr\}_{n = 1}^\infty,
        \\
        \{ [h^n, k^n] \}_{n = 1}^\infty := \bigl\{ \mathcal{T}_{\tau_n} [p^n, \tilde{p}^n, z^n, \tilde{z}^n] = \mathcal{T} (a^n, b^n, \mu^n, \lambda^n, \omega^n, A^n) [p^n, \tilde{p}^n, z^n, \tilde{z}^n] \bigr\}_{n = 1}^\infty,
    \end{cases} 
\end{equation*}
enjoy the assumptions \eqref{axLem02-02} of Key-Lemma \ref{axLem02}, in the case when $ [\tilde{p}, \tilde{z}] = [p, z] $ and $ [\tilde{h}, \tilde{k}] = [h, k] $. Therefore, as a consequence of the Key-Lemma \ref{axLem02}, we can show that:
\begin{equation}\label{mThIA-100}
    [h, k] = \mathcal{T} (a, b, \mu, \lambda, \omega, A)[p, p, z, z] \mbox{ in $ \mathscr{Y} $.} 
\end{equation}
This implies that $ [p, z] $ is the solution to (\hyperlink{P}{P}), with the initial condition verified in \eqref{mThIA-20_d}. \hfill \qed

\subsection{\hspace{-1ex}Proofs of continuous dependence in Main Theorem \ref{mainThrm1} (I-B) and uniqueness in (I-A)} \label{subSec4.3}

We begin by confirming the following relationships:
\begin{align*}
    \mathcal{T}^\ell[p^\ell, p^\ell, z^\ell, z^\ell] := & ~ \mathcal{T}(a^\ell, b^\ell, \mu^\ell, \lambda^\ell, \omega^\ell, A^\ell)[p^\ell, p^\ell, z^\ell, z^\ell]
\\
= & ~ [h^\ell, k^\ell] \mbox{ in $ \mathscr{Y} $, $ \ell = 1, 2 $,}
\end{align*}
for the two solutions $ [p^\ell, z^\ell] $ to (\hyperlink{P}{P}), $ \ell = 1, 2 $. On this basis, let us apply Key-Lemma \ref{axLem03} to the case when $ [\tilde{p}^\ell, \tilde{z}^\ell] = [p^\ell, z^\ell] $, $ \ell = 1, 2 $. Then, with \eqref{defOf_C*}, \eqref{defOf_R*}, and \eqref{defOf_tildeC*}--\eqref{defOf_tildeR*} in mind, we infer that:
\begin{align}\label{mThIA-21}
    \frac{1}{2} \frac{d}{dt} & \bigl( |(p^1 -p^2)(t)|_H^2 +\bigl| \sqrt{\mbox{\small$ a^1(t) $}} (z^1 -z^2)(t) \bigr|_H^2 \bigr)
\nonumber
\\
& \quad +\frac{1}{2} \bigl( |(p^1 -p^2)(t)|_V^2 +\nu |(z^1 -z^2)(t)|_{V_0}^2 \bigr)
\nonumber
\\
    \leq & \frac{3C^*}{2} \bigl( |(p^1 -p^2)(t)|_H^2 +\bigl| \sqrt{\mbox{\small$ a^1(t) $}} (z^1 -z^2)(t) \bigr|_H^2 \bigr)
\nonumber
    \\
    & \quad + C^* \bigl( |(h^1 -h^2)(t)|_{V^*}^2 +|(k^1 -k^2)(t)|_{V_0^*}^2 +R^*(t) \bigr), 
\\ & \mbox{a.e. } t \in (0, T); \nonumber
\end{align}
via the following computations:
\begin{equation*}
    C^* -\tilde{C}^* \geq \frac{9 (1 +\nu)}{\min \{1, \nu, \delta_*(a^1) \}} |a^1|_{W^{1, \infty}(Q)} \geq 0,  ~ R^* = \tilde{R}^* \mbox{ a.e. in $ (0, T) $,}
\end{equation*}
and
\begin{align*}
    & \hspace{-6ex} \bigl< \partial_t (z^1 -z^2)(t), a^1(t) (z^1 -z^2)(t) \bigr>_{V_0} = \frac{1}{2} \frac{d}{dt} \bigl| \sqrt{\mbox{\small$ a^1(t) $}}(z^1 -z^2)(t) \bigr|_H^2 -J_7(t),
    \\
    \mbox{with} \hspace{10ex} & 
    \\
    J_7(t) :=  & \frac{1}{2} \int_\Omega \partial_t a^1(t) |(z^1 -z^2)(t)|^2 \, dx \leq \frac{|\partial_t a^1|_{L^\infty(Q)}}{2} \int_\Omega |(z^1 -z^2)(t)|^2 \, dx
\\
\leq & \frac{|\partial_t a^1|_{L^\infty(Q)}}{2 \delta_*(a^1)} \bigl| \sqrt{\mbox{\small$ a^1(t) $}} (z^1 -z^2)(t) \bigr|_{H}^2
\\
    \leq & \frac{C^*}{2} \bigl| \sqrt{\mbox{\small$ a^1(t) $}} (z^1 -z^2)(t) \bigr|_{H}^2, \mbox{ a.e. $ t \in (0, T) $,}
\end{align*}
where we have also used Remark~\ref{Rem.relat_P-S}.
\medskip

Now, the conclusion \eqref{contiDep} of (\hyperlink{I-B}{I-B}) will be obtained by applying Gronwall's lemma to \eqref{mThIA-21}. 
\medskip

Furthermore, we can conclude the uniqueness part of (\hyperlink{I-A}{I-A}), immediately, by using the inequality \eqref{contiDep} in the special case when $ [a^1, b^1, \mu^1, \lambda^1, \omega^1, A^1] = [a^2, b^2, \mu^2, \lambda^2, \omega^2, A^2] $, $ [p_0^1, z_0^1] = [p_0^2, z_0^2] $, and $ [h^1, k^1] = [h^2, k^2] $.  \hfill \qed 

\subsection{Proof of Main Theorem \ref{mainThrm2} (II-B)} \label{subSec4.4}
    On account of \eqref{mThIA-10}, \eqref{mThIA-20_b}, \eqref{mThIA-20_c}, \eqref{mThIA-20_e}, \eqref{mThIA-20_f}, \eqref{mThIA-100}, and the uniqueness result as in Main Theorem \hyperlink{mainThrm01}{1} (\hyperlink{I-A}{I-A}), we can easily verify the convergence \eqref{concluConv_a}, together with:
\begin{equation}\label{mThIIB-20}
    \begin{cases}
        [[\overline{p}]_{\tau}, [\overline{z}]_{\tau}] \to [p, z] \mbox{ in $ [\scrH]^2 $, and weakly in $ \mathscr{V} \times \mathscr{V}_0 $,}
        \\
        [[\underline{p}]_{\tau}, [\underline{z}]_{\tau}] \to [p, z] \mbox{ in $ [\scrH]^2 $,}
    \end{cases}
    \mbox{as $ \tau \downarrow 0 $.}
\end{equation}

Now, let us assume $ [p_0, z_0] \in V \times V_0 $, and verify the convergence \eqref{concluConv_c}. Let us apply Key-Lemma \ref{axLem03} to the case when:
\begin{subequations}\label{mThIIB-01}
    \begin{equation}\label{mThIIB-01_a}
        \begin{cases}
            [a^1, b^1, \mu^1, \lambda^1, \omega^1, A^1] = \bigl[ [\overline{a}]_\tau, [\overline{b}]_\tau, [\overline{\mu}]_\tau, [\overline{\lambda}]_\tau, [\overline{\omega}]_\tau, [\overline{A}]_\tau \bigr],
            \\
            [a^2, b^2, \mu^2, \lambda^2, \omega^2, A^2] = [a, b, \mu, \lambda, \omega, A],
        \end{cases}
    \end{equation}
    \begin{equation}\label{mThIIB-01_b}
        \begin{cases}
            [p^1, \tilde{p}^1, z^1, \tilde{z}^1] = \bigl[ [\overline{p}]_\tau, [p]_\tau, [\overline{z}]_\tau, [z]_\tau \bigr], 
            \\
            [p^2, \tilde{p}^2, z^2, \tilde{z}^2] = [p, p, z, z],
        \end{cases}
    \end{equation}
    and
    \begin{equation*}
        [h^1, k^1] = \bigl[ [\overline{h}]_\tau, [\overline{k}]_\tau \bigr] \mbox{ and } [h^2, k^2] = [h, k].
    \end{equation*}
\end{subequations}
Then, we can see that:
\begin{align}\label{mThIIB-02}
    \bigl< \partial_t ([p]_\tau & -p)(t), ([\overline{p}]_\tau -p)(t) \bigr>_{V} +\bigl< \partial_t ([z]_\tau -z)(t), [\overline{a}]_\tau(t) ([\overline{z}]_\tau -z)(t) \bigr>_{V_0}
    \nonumber
    \\
    & +\frac{1}{2} \bigl| ([\overline{p}]_\tau -p)(t) \bigr|_{V}^2 +\frac{\nu}{2} \bigl| ([\overline{z}]_\tau -z)(t) \bigr|_{V_0}^2
    \nonumber
    \\
    \leq & \tilde{C}_\tau^* \bigl( \bigl| ([\overline{p}]_\tau -p)(t) \bigr|_H^2 +\bigl| \sqrt{[\overline{a}]_\tau(t)} ([\overline{z}]_\tau -z)(t) \bigr|_H^2 \bigr)
    \nonumber
    \\
    & +\tilde{C}_\tau^* \bigl( |([\overline{h}]_\tau -h)(t)|_{V^*}^2 +|([\overline{k}]_\tau -k)(t)|_{V_0}^2 +R_\tau^*(t) \bigr),
    \\
    & \mbox{for a.e. $ t \in (0, T) $, and any $ \tau \in (0, \delta_0) $.}
    \nonumber
\end{align}
Here, for any $ \tau \in (0, \delta_0) $, $ \tilde{C}_\tau^* $ is the constant as in \eqref{mThIIB-02-01}, and $ R_\tau^* $ is the function of time as in \eqref{defOf_tildeR*}, corresponding to the case of \eqref{mThIIB-01}. 
\medskip

Next, let us invoke \eqref{timeInterp}, and compute that:
\begin{subequations}\label{mThIIB-03}
    \begin{align}\label{mThIIB-03_a}
        & \hspace{-6ex} \bigl< \partial_t ([p]_\tau -p)(t), ([\overline{p}]_\tau -p)(t) \bigr>_{V} 
        \nonumber
        \\
        & \qquad = \frac{1}{2} \frac{d}{dt} \bigl| ([p]_\tau -p)(t) \bigr|_H^2 +\frac{1}{\tau} I_0^*(t) +I_1^*(t),
        \\
        \mbox{with} \hspace{10ex} &
        \nonumber
        \\
        I_0^*(t) := & \tau \bigl< \partial_t [p]_\tau(t), ([\overline{p}]_\tau -[p]_\tau)(t) \bigr>_{V}
        \nonumber
        \\
        = & \sum_{i = 1}^\infty \left( \mbox{\small$ \ds \frac{t_i -t}{\tau} $} \right) \chi_{\Delta_i^T}(t) \bigl| ([\overline{p}]_\tau -[\underline{p}]_\tau)(t) \bigr|_H^2, 
        \label{mThIIB-03_b}
        \\
        I_1^*(t) := & -\bigl< \partial_t p(t), ([\overline{p}]_\tau -[p]_\tau)(t) \bigr>_V, \mbox{ a.e. $ t \in (0, T) $,}
        \label{mThIIB-03_c}
    \end{align}
\end{subequations}
    \begin{align}
        \label{mThIIB-04}
        \bigl| ([\overline{p}]_\tau & -p)(t) \bigr|_H^2 \leq 2 \bigl( \bigl| ([{p}]_\tau -p)(t) \bigr|_H^2 + \bigl| ([\overline{p}]_\tau -[p]_\tau)(t) \bigr|_H^2 \bigr)
        \nonumber
        \\
        = & 2 \bigl| ([{p}]_\tau -p)(t) \bigr|_H^2 +2  \sum_{i = 1}^\infty \left( \mbox{\small$ \ds \frac{t_i -t}{\tau} $} \right)^2 \chi_{\Delta_i^T}(t) \bigl| ([\overline{p}]_\tau -[\underline{p}]_\tau)(t) \bigr|_H^2
        \\
        \leq &  2 \bigl| ([{p}]_\tau -p)(t) \bigr|_H^2 +2 I_0^*(t), \mbox{ a.e. $ t \in (0, T) $,}
        \nonumber
    \end{align}
\begin{subequations}\label{mThIIB-05}
    \begin{align}
        & \hspace{-6ex} \bigl< \partial_t ([z]_\tau -z)(t), [\overline{a}]_\tau(t) ([\overline{z}]_\tau -z)(t) \bigr>_{V_0} 
        \nonumber
        \\
        = & \frac{1}{2} \frac{d}{dt} \bigl| \sqrt{a(t)} ([z]_\tau -z)(t) \bigr|_H^2 +\frac{1}{\tau} J_0^*(t) +J_1^*(t) +J_2^*(t) +J_3^*(t),
        \label{mThIIB-05_a}
        \\
        \mbox{with} \hspace{10ex} &  
        \nonumber
        \\[2ex]
        J_0^*(t) := ~ & \tau \bigl< \partial_t [z]_\tau(t), a(t)([\overline{z}]_\tau -[z]_\tau)(t) \bigr>_{V_0}
        \label{mThIIB-05_b}
        \\
        = & \sum_{i = 1}^\infty \left( \mbox{\small$ \ds \frac{t_i -t}{\tau} $} \right) \chi_{\Delta_i^T}(t) \bigl| \sqrt{a(t)} ([\overline{z}]_\tau -[\underline{z}]_\tau)(t) \bigr|_H^2, 
        \nonumber
        \end{align}
        \begin{align}
        J_1^*(t) := & -\frac{1}{2} \int_\Omega \partial_t a(t) |([z]_\tau -z)(t)|^2 \, dx
        \nonumber
        \\
        \geq & -\frac{|\partial_t a|_{L^\infty(Q)}}{\delta_*(a)} \bigl| \sqrt{a(t)}([z]_\tau -z)(t) \bigr|_H^2,
        \nonumber
        \\
        \geq & -C_0^* \bigl| \sqrt{a(t)}([z]_\tau -z)(t) \bigr|_H^2,
        \label{mThIIB-05_c}
        \\[2ex]
        J_2^*(t) := & \bigl< \partial_t ([z]_\tau -z)(t), ([\overline{a}]_\tau -a)(t) ([\overline{z}]_\tau -z)(t) \bigr>_{V_0}
        \nonumber
        \\
            \geq & -|\partial_t([z]_\tau -z)(t)|_{V_0^*} \bigl| ([\overline{a}]_\tau -a)(t) \nabla([\overline{z}]_\tau -z)(t) \bigr|_{[H]^N}
        \nonumber
        \\
            & \quad -|\partial_t([z]_\tau -z)(t)|_{V_0^*} \bigl| ([\bar{z}_\tau -z](t)) \nabla ([\bar{a}]_\tau -a) \bigr|_{[H]^N},
        \nonumber
        \\
            \geq & -|[\overline{a}]_\tau -a|_{L^\infty(Q)} |\partial_t([z]_\tau -z)(t)|_{V_0^*} |([\overline{z}]_\tau -z)(t)|_{V_0}
        \nonumber
        \\
        & \quad -2|a|_{W^{1, \infty}(Q)}|\partial_t([z]_\tau -z)(t)|_{V_0^*} |([\bar{z}]_\tau -z)(t)|_{H},
        \label{mThIIB-05_d}
        \\[2ex]
            J_3^*(t) := & -\bigl\langle \partial_t z(t), a(t)([\overline{z}]_\tau -[z]_\tau)(t) \bigr\rangle_{V_0},
            \\
            & \mbox{ a.e. $ t \in (0, T) $,}
            \nonumber
    \end{align}
\end{subequations}
\begin{align}\label{mThIIB-08}
    \bigl| \sqrt{[\overline{a}]_\tau(t)} & ([\overline{z}]_\tau -z)(t) \bigr|_H^2 \leq \frac{|[\overline{a}]_\tau|_{L^\infty(Q)}}{\delta_*(a)} \bigl| \sqrt{a(t)}([\overline{z}]_\tau -z)(t) \bigr|_H^2 
    \nonumber
    \\
    \nonumber
    \leq & \frac{2 |a|_{L^\infty(Q)}}{\delta_*(a)} \bigl( \bigl| \sqrt{a(t)}([{z}]_\tau -z)(t) \bigr|_H^2 +\bigl| \sqrt{a(t)}([\overline{z}]_\tau -[{z}]_\tau)(t) \bigr|_H^2 \bigr)
    \nonumber
    \\
    \leq & C_0^* \bigl| \sqrt{a(t)} ([{z}]_\tau -z)(t) \bigr|_H^2 
    \nonumber
    \\
    & + C_0^* \sum_{i = 1}^\infty  \left( \mbox{\small$ \ds \frac{t_i -t}{\tau} $} \right)^2\chi_{\Delta_i^T}(t)  \bigl| \sqrt{a(t)} ([\overline{z}]_\tau -[\underline{z}]_\tau)(t) \bigr|_H^2
    \nonumber
    \\
    \leq &  C_0^* \bigl| \sqrt{a(t)} ([{z}]_\tau -z)(t) \bigr|_H^2 +C_0^* J_0^*(t), \mbox{ a.e. $ t \in (0, T) $.}
\end{align}
Here, let us set a constant $ \delta_1 \in (0, \delta_0) $ of time step-size, so small to satisfy:
\begin{equation}\label{mThIIB-06}
    0 < \delta_1 < \frac{1}{2} \min \left\{ \delta_0, \frac{1}{(1 +C_0^*)^2} \right\}.
\end{equation}
Then, from \eqref{mThIIB-02}--\eqref{mThIIB-06}, we can deduce that:
\begin{align}\label{mThIIB-07}
    \frac{1}{2} \frac{d}{dt} \bigl( |(& [p]_\tau -p)(t)|_H^2 +\bigl| \sqrt{a(t)}([z]_\tau -z)(t) \bigr|_H^2 \bigr)
    \nonumber
    \\
    & +\frac{1}{2} |([\overline{p}]_\tau -p)(t)|_V^2 +\frac{\nu}{2} |([\overline{z}]_\tau -z)(t)|_{V_0}^2
    \nonumber
    \\
    \leq & (1 +C_0^*)^2 \bigl( |([p]_\tau -p)(t)|_H^2 +\bigl| \sqrt{a(t)}([z]_\tau -z)(t) \bigr|_H^2 \bigr) -J_1^*(t)
    \nonumber
    \\
    & +C_0^* \bigl( |([\overline{h}]_\tau -h)(t)|_{V^*}^2 +|([\overline{k}]_\tau -k)(t)|_{V_0^*}^2 +R_\tau^*(t) \bigr)
    \nonumber
    \\
    & -\frac{1}{\tau} \left( 1 -\tau (1 +C_0^*)^2 \right) (I_0^* +J_0^*)(t) -I_1^*(t) -J_2^*(t) -J_3^*(t)
    \nonumber
    \\
    \leq & 2 (1 +C_0^*)^2 \bigl( |([p]_\tau -p)(t)|_H^2 +\bigl| \sqrt{a(t)}([z]_\tau -z)(t) \bigr|_H^2 \bigr)
\nonumber
    \\
    & +C_0^* \bigl( |([\overline{h}]_\tau -h)(t)|_{V^*}^2 +|([\overline{k}]_\tau -k)(t)|_{V_0^*}^2 +R_\tau^*(t) \bigr) 
    \\
    & +|I_1^*(t)| +|J_2^*(t)| +|J_3^*(t)|, \mbox{a.e. $ t \in (0, T) $, and any $ \tau \in (0, \delta_1) $.}
    \nonumber
\end{align}
    Additionally, taking into account \eqref{timeInterp}, \eqref{mThIA-10}, and $ [p_0, z_0] \in V \times V_0 $, we can observe that:
    \begin{equation*}
        \left\{ \begin{array}{rcl}
            \ds \sup_{\tau \in (0, \delta_1)} \bigl| \bigl[ [\underline{p}]_\tau, [\underline{z}]_\tau \bigr] \bigr|_{\mathscr{V} \times \mathscr{V}_0} & \leq & \ds  \sqrt{\delta_1} |[p_0, z_0]|_{V \times V_0} +C_1^* \bigl( |[p_0, z_0]|_{[H]^2} +K^* \bigr), 
            \\[1ex]
            \ds \sup_{\tau \in (0, \delta_1)} \bigl| \bigl[ [p]_\tau, [z]_\tau \bigr] \bigr|_{\mathscr{V} \times \mathscr{V}_0} & \leq & \ds  \sqrt{\delta_1} |[p_0, z_0]|_{V \times V_0} +2C_1^* \bigl( |[p_0, z_0]|_{[H]^2} +K^* \bigr), 
        \end{array} \right.
    \end{equation*}
and therefore, owing to \eqref{mThIIB-20}, and the compactness theory of Alaoglu--Bourbaki--Kakutani \cite[Theorem 1.2.5]{MR1375237}, 
\begin{align}\label{mThIIB-20_b}
    \bigl[ [\underline{p}]_\tau, [\underline{z}]_\tau \bigr] \to [p, z] \mbox{ and } \bigl[ [p]_\tau, [z]_\tau \bigr] \to [p, z], \mbox{ weakly in $ \mathscr{V} \times \mathscr{V}_0 $, as $ \tau \downarrow 0 $.} 
\end{align}
In the light of \eqref{D-given}, \eqref{defOf_tildeR*}, \eqref{mThIA-10}, \eqref{mThIA-13}, \eqref{mThIIB-20}, \eqref{mThIIB-20_b}, Remark \ref{Rem.t-discrete}, Remark \ref{Rem.sol} (\hyperlink{R2}{R2}), and the dominated convergence theorem (cf. \cite[Theorem 10]{MR0492147}), it is verified that:
\begin{align}\label{mThIIB-09}
    |R_\tau^* & |_{L^1(0, T)} \leq |\partial_t z|_{\mathscr{V}_0^*}^2 |[\overline{a}]_\tau -a|_{L^\infty(Q)}^2 
    \nonumber
    \\
    & +\int_0^T |z(t)|_{V_0}^2 \bigl| ([\overline{b}]_\tau -b)(t) \bigr|_{L^4(\Omega)}^2 \, dt
    \nonumber
    \\
    & +\int_0^T |\partial_t z(t)|_{V_0^*}^2 |\nabla ([\overline{a}]_\tau -a)(t)|_{L^4(\Omega)^N}^2 \, dt
    \nonumber
    \\
    & +\int_0^T |p(t)|_V^2 \bigl( | ([\overline{\mu}]_\tau -\mu)(t)|_H^2 +|([\overline{\omega}]_\tau -\omega)(t)|_{L^4(\Omega)^N}^2 \bigr) \, dt
    \nonumber
    \\
    & +\int_0^T \bigl( \bigl| p(t)([\overline{\lambda}]_\tau -\lambda)(t) \bigr|_{H}^2 +\bigl| \nabla z(t) \cdot ([\overline{\omega}]_\tau -\omega)(t) \bigr|_H^2 \bigr) \, dt
    \nonumber
    \\
    & +\int_0^T \bigl| ([\overline{A}]_\tau -A)(t) \nabla z(t) \bigr|_{[H]^N}^2 \, dt \to 0,
\end{align}
\begin{equation}\label{mThIIB-10-00}
    |I_1^*|_{L^1(0, T)} \leq \bigl| \bigl< \partial_t p, [\overline{p}]_\tau -p \bigr>_{\mathscr{V}} \bigr| +\bigl| \bigl< \partial_t p, [p]_\tau -p \bigr>_{\mathscr{V}} \bigr| \to 0,
\end{equation}
\begin{align}\label{mThIIB-10}
    |J_2^*|_{L^1(0, T)} \leq & |\partial_t([z]_\tau -z)|_{\mathscr{V}_0^*} |[\overline{z}]_\tau -z|_{\mathscr{V}_0} |[\overline{a}]_\tau -a|_{L^\infty(Q)} 
    \nonumber
    \\
    & \quad +2|a|_{W^{1, \infty}(Q)} |\partial_t([z]_\tau -z)|_{\mathscr{V}_0^*}  |[\overline{z}]_\tau -z|_{\scrH} \to  0,
\end{align}
and
\begin{align}\label{mThIIB-10-01}
    |J_3^*|_{L^1(0, T)} \leq & |\langle a \partial_t z, [\overline{z}]_\tau -z \rangle_{\mathscr{V}_0}| +|\langle a \partial_t z, [z]_\tau -z \rangle_{\mathscr{V}_0}| 
    \nonumber
    \\
    \to & 0, \mbox{ as $ \tau \downarrow 0 $.}
\end{align}

Now, on account of \eqref{rxForces01}, \eqref{mThIIB-07}, \eqref{mThIIB-09}--\eqref{mThIIB-10-01}, we obtain the following convergence:
\begin{equation}\label{mThIIB-11}
    \begin{cases}
        \bigl[ [p]_\tau, [z]_\tau \bigr] \to [p, z] \mbox{ in $ C([0, T]; H)^2 $,}
        \\
        \bigl[ [\overline{p}]_\tau, [\overline{z}]_\tau \bigr] \to [p, z] \mbox{ in $ \mathscr{V} \times \mathscr{V}_0 $,}
    \end{cases}
    \mbox{as $ \tau \downarrow 0 $,}
\end{equation}
by applying Gronwall's lemma to \eqref{mThIIB-07}, and by letting $ \tau \downarrow 0 $ for the resulted inequality. 
\medskip

Moreover, with \eqref{timeInterp}, \eqref{mThIIB-20_b}, \eqref{mThIIB-11}, and $ [p_0, z_0] \in V \times V_0 $ in mind, we can see that:
\begin{align}\label{mThIIB-12}
    \bigl| [p, z] \bigr|_{\mathscr{V} \times \mathscr{V}_0}^2 \leq & \varliminf_{\tau \downarrow 0} \bigl| \bigl[ [\underline{p}]_\tau, [\underline{z}]_\tau \bigr] \bigr|_{\mathscr{V} \times \mathscr{V}_0}^2 \leq \varlimsup_{\tau \downarrow 0} \bigl| \bigl[ [\underline{p}]_\tau, [\underline{z}]_\tau \bigr] \bigr|_{\mathscr{V} \times \mathscr{V}_0}^2 
    \nonumber
    \\
    \leq & \lim_{\tau \downarrow 0} \left( \tau \bigl| [p_0, z_0] \bigr|_{V \times V_0}^2 +\bigl| \bigl[ [\overline{p}]_\tau, [\overline{z}]_\tau \bigr] \bigr|_{\mathscr{V} \times \mathscr{V}_0}^2 \right) = \bigl| [p, z] \bigr|_{\mathscr{V} \times \mathscr{V}_0}^2,
    \nonumber
    \\
    \mbox{i.e., } & \lim_{\tau \downarrow 0}  \bigl| \bigl[ [\underline{p}]_\tau, [\underline{z}]_\tau \bigr] \bigr|_{\mathscr{V} \times \mathscr{V}_0} = \bigl| [p, z] \bigr|_{\mathscr{V} \times \mathscr{V}_0}.
\end{align}
Subsequently, from \eqref{mThIIB-20_b}, \eqref{mThIIB-12} and the uniform convexity of the topology of $ \mathscr{V} \times \mathscr{V}_0 $, it follows that:
\begin{equation}\label{mThIIB-13}
    \bigl[ [\underline{p}]_\tau, [\underline{z}]_\tau \bigr] \to [p, z] \mbox{ in $ \mathscr{V} \times \mathscr{V}_0 $, as $ \tau \downarrow 0 $.}
\end{equation}
By \eqref{timeInterp}, \eqref{mThIIB-11}, and \eqref{mThIIB-13}, we infer that:
\begin{align}\label{mThIIB-14}
    \bigl| \bigl[ [{p}]_\tau, [{z}]_\tau \bigr]  -[p, z] \bigl|_{\mathscr{V} \times \mathscr{V}_0} 
    \nonumber
    & \leq \bigl| \bigl[ [\overline{p}]_\tau, [\overline{z}]_\tau \bigr]  -\bigl[ [\underline{p}]_\tau, [\underline{z}]_\tau \bigr] \bigl|_{\mathscr{V} \times \mathscr{V}_0} 
    \\
    & +\bigl| \bigl[ [\overline{p}]_\tau, [\overline{z}]_\tau \bigr]  -[p, z] \bigl|_{\mathscr{V} \times \mathscr{V}_0}
    \to  0, \mbox{ as $ \tau \downarrow 0 $.}
\end{align}

The convergence \eqref{concluConv_c} will be deduced as a consequence of \eqref{mThIIB-11} and \eqref{mThIIB-14}. \hfill \qed

\section{Conclusions}\label{05Concls}

In this Section, several Corollaries and Remarks will be shown as conclusions derived from Main Theorems and Key-Lemmas as in Sections \ref{02MainThms} and \ref{03Key-Lems}. 
\medskip

We begin by prescribing additional notations. Let us define:
\begin{equation*}
    \mathscr{Z} := \bigl( W^{1, 2}(0, T; V^*) \cap \mathscr{V} \bigr) \times  \bigl( W^{1, 2}(0, T; V_0^*) \cap \mathscr{V}_0 \bigr).
\end{equation*}
Then, for any sextet of data $ [a, b, \mu, \lambda, \omega, A] \in \mathscr{S}_0 $, the Main Theorem \hyperlink{mainThrm01}{1} allows us to define an operator $ \mathcal{P} = \mathcal{P}(a, b, \mu, \lambda, \omega, A) : [H]^2 \times \mathscr{Y} \longrightarrow \mathscr{Z} $, which maps any pair $ \bigl[ [p_0, z_0], [h, k] \bigr] $ of the initial pair $ [p_0, z_0] \in [H]^2 $ and the forcing pair $ [h, k] \in \mathscr{Y} $ to the corresponding solution $ [p, z] $ to (\hyperlink{P}{P}), i.e.:
\begin{equation*}
    [p, z] = \mathcal{P}\bigl[[p_0, z_0], [h, k] \bigr] = \mathcal{P}(a, b, \mu, \lambda, \omega, A)\bigl[[p_0, z_0], [h, k] \bigr] \in \mathscr{Z}.
\end{equation*}
\begin{remark}\label{Rem.2concl1}
    Let us fix any sextet of data $ [a, b, \mu, \lambda, \omega, A] \in \mathscr{S}_0 $. Then, from Remark \ref{Rem.4concl1}, the operator $ \mathcal{P} : [H]^2 \times \mathscr{Y} \longrightarrow \mathscr{Z} $ coincides with the inverse of:
    \begin{align}\label{op.R}
        \mathcal{Q} = & \mathcal{Q}(a, b, \mu, \lambda, \omega, A): [p, z] \in \mathscr{Z} 
        \nonumber
        \\
        \mapsto & \bigl[ [p_0, z_0], [h, k] \bigr] := \bigl[ [p(0), z(0)], \mathcal{T}(a, b, \mu, \lambda, \omega, A)[p, p, z, z] \bigr] \in [H]^2 \times \mathscr{Y}.
    \end{align}
    Namely, $ \mathcal{P} = \mathcal{P}(a, b, \mu, \lambda, \omega, A) $ $ \bigl( = \bigl[ \mathcal{Q}(a, b, \mu, \lambda, \omega, A) \bigr]^{-1} \bigr) $ is a bijective operator from $ [H]^2 \times \mathscr{Y} $ onto $ \mathscr{Z} $. Hence, the Hilbert space $ \mathscr{Z} $ coincides with the class $ \mathcal{P}([H]^2 \times \mathscr{Y}) $ of solutions to (\hyperlink{P}{P}), for all initial pairs $ [p_0, z_0] \in [H]^2 $ and forcing pairs $ [h, k] \in \mathscr{Y} $.  
\end{remark}
\begin{remark}\label{Rem3.concl1}
    Notice that $ \mathscr{Z} $ is a Hilbert space, endowed with the following inner product:
    \begin{align}
        & \bigl( [p, z], [\tilde{p}, \tilde{z}] \bigr)_{\mathscr{Z}} := (p, \tilde{p})_{W^{1, 2}(0, T; V^*)} +(p, \tilde{p})_{\mathscr{V}}
        \nonumber
        \\
        & \qquad +(z, \tilde{z})_{W^{1, 2}(0, T; V_0^*)} +(z, \tilde{z})_{\mathscr{V}_0}, \mbox{ for $ [p, z], [\tilde{p}, \tilde{z}] \in \mathscr{Z} $,}
        \label{Z_Hilbert01}
    \end{align}
and the norm:
    \begin{equation}
        \bigl| \, [p, z] \, \bigr|_\mathscr{Z} := \bigl[ \bigl( [p, z], [p, z] \bigr)_\mathscr{{Z}} \bigr]^{\frac{1}{2}}, \mbox{ for $ [p, z] \in \mathscr{Z} $.}
        \label{Z_Hilbert02}
    \end{equation}

    In addition, the Hilbert space $ \mathscr{Z} $ is a subspace of Banach spaces $ C([0, T]; H)^2 $ and $ L^\infty(0, T; H)^2 $, and $ \mathscr{Z} $ is compactly embedded in $ L^\infty(0, T; H)^2 $ in the weak-$ * $ topology. In fact, if $ \mathscr{B}_0 $ is a bounded subset in $ \mathscr{Z} $, then by referring to the compactness theories of Aubin's type \cite[Corollary 4]{MR0916688} and Alaoglu--Bourbaki--Kakutani \cite[Theorem 1.2.5]{MR1375237}, we find a sequence $ \{ [\hat{p}^n, \hat{z}^n] \}_{n = 1}^\infty \subset \mathscr{B}_0 $ such that:
    \begin{equation*}
        \begin{array}{c}
            [\hat{p}^n, \hat{z}^n] \to [\hat{p}, \hat{z}] \mbox{ in $ [\scrH]^2 $, and weakly in $ \mathscr{Z} $,}
            \\
            \mbox{as $ n \to \infty $, for some $ [\hat{p}, \hat{z}] \in \mathscr{Z} $.}
        \end{array}
    \end{equation*}
    Subsequently, from the strong convergence in $ [\scrH]^2 $, one can see that:
    \begin{equation*}
        [\hat{p}^n(t_0), \hat{z}^n(t_0)] \to [\hat{p}(t_0), \hat{z}(t_0)] \mbox{ in $ [H]^2 $, as $ n \to \infty $, for some $ t_0 \in (0, T) $,}
    \end{equation*}
    by taking a subsequence if necessary. Hence, we can observe the weak-$*$ compactness of $ \mathscr{B}_0 $ in $ L^\infty(0, T; H)^2 $, via the following estimate for the sequence $ \{ [\hat{p}^n, \hat{z}^n] \}_{n = 1}^\infty \subset \mathscr{B}_0 $:
    \begin{align*}
        & \frac{1}{2} \bigl| [\hat{p}^n(t), \hat{z}^n(t)] \bigr|_{[H]^2}^2 \leq \frac{1}{2} \bigl| [\hat{p}^n(t_0), \hat{z}^n(t_0)] \bigr|_{[H]^2}^2 
        \\
        & \qquad +\left| \int_{t_0}^t \bigl< \partial_t [\hat{p}^n(\varsigma), \hat{z}^n(\varsigma)], [\hat{p}^n(\varsigma), \hat{z}^n(\varsigma)] \bigr>_{V \times V_0} \, d \varsigma \right|
        \\
        \leq & \sup_{n \in \bbN} \left\{ \bigl| [\hat{p}^n(t_0), \hat{z}^n(t_0)] \bigr|_{[H]^2}^2 +\bigl| [\partial_t \hat{p}^n, \partial_t \hat{z}^n] \bigr|_{\mathscr{Y}} \bigl| [\hat{p}^n, \hat{z}^n] \bigr|_{\mathscr{V} \times \mathscr{V}_0} \right\} < \infty,
        \\
        & \qquad \mbox{for all $ t \in [0, T] $, and $ n \in \bbN $.}
    \end{align*}
    
    Furthermore, by the inclusion $ \mathscr{Z} \subset C([0, T]; H)^2 $, we can let $ \mathscr{Z} $ be a Banach space by using another norm:
    \begin{equation}\label{z_Banach01}
        \bigl\| \, [p, z] \, \bigr\| := \bigl| \, [p, z] \, \bigr|_\mathscr{Z} +\bigl| \, [p, z] \, \bigr|_{C([0, T]; H)^2}, \mbox{ for $ [p, z] \in \mathscr{Z} $.}
    \end{equation}
    In view of this, we denote by $ (\mathscr{Z}, \|\cdot\|) $ the Banach space with the norm as in \eqref{z_Banach01}, and distinguish it from the Hilbert space $ \mathscr{Z} $ with the usual topology as in \eqref{Z_Hilbert01} and \eqref{Z_Hilbert02}. 
\end{remark}

Based on these, we can derive the following Corollaries. 
\begin{corollary}\label{Cor.concl1}
    Let us fix a sextet of data $ [a, b, \mu, \lambda, \omega, A] \in \mathscr{S}_0 $. Then, the following two items hold.
    \begin{description}
        \item[\textmd{\hypertarget{C0}{\textit{(C0)}}}]
            There is a positive constant $ M_0^* = M_0^*(a, b, \mu, \lambda, \omega, A) $, depending on $ |a|_{C(\overline{Q})} $, \linebreak  $ |\nabla a|_{L^\infty(Q)^N} $, $ |b|_{L^\infty(Q)} $, $ |\mu|_{L^\infty(0, T; H)} $, $ |\lambda|_{L^\infty(Q)} $, $ |\omega|_{L^\infty(Q)^N} $, and $ |A|_{L^\infty(Q)^{N \times N}} $, and there is another positive constant $ M_1^* = M_1^*(a, b, \mu, \lambda, \omega, A) $, depending on $ |a|_{C(\overline{Q})} $, \linebreak $ |\nabla a|_{L^\infty(Q)^N} $, $ \delta_*(a) $, $ |b|_{L^\infty(Q)} $, $ |\mu|_{L^\infty(0, T; H)} $, $ |\lambda|_{L^\infty(Q)} $, $ |\omega|_{L^\infty(Q)^N} $, and $ |A|_{L^\infty(Q)^{N \times N}} $, such that:
                \begin{equation}\label{estFinal100}
                    \begin{array}{c}
                        M_0^* \bigl| \bigl[ [p_0, z_0], [h, k] \bigr] \bigr|_{[H]^2 \times \mathscr{Y}} \leq \bigl| [p, z] \bigr|_{(\mathscr{Z}; \|\cdot\|)} \leq M_1^*  \bigl| \bigl[ [p_0, z_0], [h, k] \bigr] \bigr|_{[H]^2 \times \mathscr{Y}},
                        \\[1ex]
                        \mbox{for all $ [p_0, z_0] \in [H]^2 $, $ [h, k] \in \mathscr{Y} $,}
                        \\[1ex]
                        \mbox{and $ [p, z] = \mathcal{P}(a, b, \mu, \lambda, \omega, A) \bigl[ [p_0, z_0], [h, k] \bigr] \in \mathscr{Z} $,}
                    \end{array}
                \end{equation}
                i.e. the operator $ \mathcal{P} = \mathcal{P}(a, b, \mu, \lambda, \omega, A) $ is an isomorphism between the Hilbert space $ [H]^2 \times \mathscr{Y} $ and the Banach space $ (\mathscr{Z}, \|\cdot\|) $.

        \item[\textmd{\hypertarget{C1}{\textit{(C1)}}}]The class of solutions $ \mathcal{P}([H]^2 \times \mathscr{Y}) $ is decomposed as follows:
            \begin{equation}\label{concl1-01}
                \mathcal{P}([H]^2 \times \mathscr{Y}) = \mathcal{P}\bigl( [H]^2 \times \{ [0, 0] \} \bigr) +\mathcal{P}\bigl( \{ [0, 0] \} \times \mathscr{Y} \bigr),  
            \end{equation}
            and moreover, the restricted class $ \mathcal{P}\bigl( [V \times V_0] \times \mathscr{Y} \bigr) $ is represented as: 
            \begin{equation}\label{concl1-02}
                \mathcal{P}\bigl( [V \times V_0] \times \mathscr{Y} \bigr) = [V \times V_0] +\mathcal{P} \bigl( \{ [0, 0] \} \times \mathscr{Y} \bigr), 
            \end{equation}
            by identifying $ V \times V_0 $ with a class of constants in time.
    \end{description}
\end{corollary}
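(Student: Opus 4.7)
The plan is to exploit the linearity of problem (\hyperlink{P}{P}) together with the well-posedness result of Main Theorem \hyperlink{mainThrm01}{1}. Part (\hyperlink{C0}{C0}) decomposes into an upper bound (continuity of $\mathcal{P}$) and a lower bound (continuity of $\mathcal{Q}=\mathcal{P}^{-1}$, cf.\ Remark \ref{Rem.2concl1}); Part (\hyperlink{C1}{C1}) will then follow from purely algebraic identities based on the linearity of $\mathcal{P}$.

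For the upper bound in \eqref{estFinal100}, I would apply \eqref{contiDep} with $[p^1,z^1]=[p,z]=\mathcal{P}\bigl[[p_0,z_0],[h,k]\bigr]$, $[p^2,z^2]=[0,0]$, zero second forcing, zero second initial datum, and the second sextet of data chosen equal to the first (so that $R^*\equiv 0$). This controls $|[p,z]|_{L^\infty(0,T;H)^2}^2+|[p,z]|_{\mathscr{V}\times\mathscr{V}_0}^2$ by a constant multiple of $|[p_0,z_0]|_{[H]^2}^2+|[h,k]|_{\mathscr{Y}}^2$, with constant depending on $C^*$ and on $|a|_{C(\overline{Q})}$. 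The time-derivatives $\partial_t p,\partial_t z$ are then controlled by \eqref{estFinal01} in Remark \ref{Rem.4concl1}, which introduces the factor $1/\delta_*(a)^2$; summing the two bounds produces the right-hand inequality of \eqref{estFinal100} with $M_1^*$ having the advertised dependencies. The lower bound comes from the boundedness of $\mathcal{Q}$ defined in \eqref{op.R}: the trace $[p,z]\mapsto[p(0),z(0)]$ is dominated by the $C([0,T];H)^2$ contribution to $\|\cdot\|$, while $[p,z]\mapsto\mathcal{T}[p,p,z,z]$ is bounded by \eqref{estFinal00} with constant $M_0=M_0(a,b,\mu,\lambda,\omega,A)$ independent of $\delta_*(a)$; taking $M_0^*:=(1+M_0)^{-1}$ yields the left-hand inequality with the required dependencies.

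For (\hyperlink{C1}{C1}), the linearity of $\mathcal{P}=\mathcal{Q}^{-1}$ makes \eqref{concl1-01} immediate via
$\mathcal{P}\bigl[[p_0,z_0],[h,k]\bigr]=\mathcal{P}\bigl[[p_0,z_0],[0,0]\bigr]+\mathcal{P}\bigl[[0,0],[h,k]\bigr]$, the reverse inclusion being trivial. For \eqref{concl1-02}, the key observation is that every $[p_0,z_0]\in V\times V_0$, regarded as constant in time, lies in $\mathscr{Z}$ (the distributional time derivative vanishes and the spatial values embed into $\mathscr{V}\times\mathscr{V}_0$ trivially), and moreover $\mathcal{T}[p_0,p_0,z_0,z_0]\in\mathscr{Y}$ by \eqref{estFinal00}. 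Given $[p,z]=\mathcal{P}\bigl[[p_0,z_0],[h,k]\bigr]$, the shifted pair $[\tilde p,\tilde z]:=[p,z]-[p_0,z_0]\in\mathscr{Z}$ has vanishing initial trace, and linearity of $\mathcal{Q}$ gives $\mathcal{Q}[\tilde p,\tilde z]=\bigl[[0,0],[h,k]-\mathcal{T}[p_0,p_0,z_0,z_0]\bigr]$, so $[\tilde p,\tilde z]\in\mathcal{P}(\{[0,0]\}\times\mathscr{Y})$; the reverse inclusion is obtained by a symmetric argument.

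The main obstacle is essentially bookkeeping: tracing the precise dependence of $M_0^*$ and $M_1^*$ on each listed quantity (in particular, ensuring $M_0^*$ avoids $\delta_*(a)$, consistent with \eqref{estFinal00}), and verifying that the constant-in-time embedding $V\times V_0\hookrightarrow\mathscr{Z}$ interacts correctly with $\mathcal{T}$ so that the subtraction argument underlying \eqref{concl1-02} is legitimate. Both points are routine consequences of \eqref{estFinal00}--\eqref{estFinal01} and Main Theorem \hyperlink{mainThrm01}{1}.
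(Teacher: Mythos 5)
Your proposal follows essentially the same route as the paper: the upper bound via Main Theorem \hyperlink{mainThrm01}{1} (\hyperlink{I-B}{I-B}) with identical data and zero second solution plus the time-derivative estimate \eqref{estFinal01}, the lower bound via the boundedness of $\mathcal{Q}$ from \eqref{estFinal00} together with the trivial trace estimate, and (\hyperlink{C1}{C1}) via linearity and the subtraction of the constant-in-time initial pair using $\mathcal{Q}[p_0,z_0]=\bigl[[p_0,z_0],\mathcal{T}[p_0,p_0,z_0,z_0]\bigr]$. The argument and the tracking of constant dependencies are correct.
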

\begin{proof}
    First, we show (\hyperlink{C0}{C0}). On account of Remarks \ref{Rem.4concl1} and \ref{Rem.2concl1}, we can see that the operator $ \mathcal{Q} = \mathcal{Q}(a, b, \mu, \lambda, \omega, A) : \mathscr{Z} \longrightarrow [H]^2 \times \mathscr{Y} $ given in \eqref{op.R} is a bijective linear operator. Besides, we can check that $ \mathcal{Q} = \mathcal{Q}(a, b, \mu, \lambda, \omega, A) \in \mathscr{L}\bigl( (\mathscr{Z}, \|\cdot\|); [H]^2 \times \mathscr{Y} \bigr) $, by using 
        the estimate \eqref{estFinal00}, and the trivial estimate:
    \begin{equation}\label{est.trivial}
        \bigl| [p(0), z(0)] \bigr|_{[H]^2} \leq M_0 \bigl| [p, z] \bigr|_{C([0, T]; H)^2}, \mbox{ for $ [p, z] \in \mathscr{Z} $,}
    \end{equation}
    with use of the constant $ M_0 = M_0(a, b, \mu, \lambda, \omega, A) $ given in \eqref{defOf_M0}. The estimates \eqref{estFinal00} and \eqref{est.trivial} imply that:
    \begin{align}\label{estFinal10}
        & \bigl| \mathcal{Q}(a, b, \mu, \lambda, \omega, A) \bigr|_{\mbox{\scriptsize$ \mathscr{L}\bigl( (\mathscr{Z}, \|\cdot\|);[H]^2 \times \mathscr{Y} \bigr) $}} \leq M_0, 
        \nonumber
        \\
        \mbox{i.e. } & \bigl| \mathcal{P}(a, b, \mu, \lambda, \omega, A) \bigr|_{\mbox{\scriptsize$ \mathscr{L}\bigl([H]^2 \times \mathscr{Y}; (\mathscr{Z}, \|\cdot\|) \bigr) $}} \geq \frac{1}{M_0} =: M_0^*, 
        \\
        & \mbox{if $  \mathcal{P}(a, b, \mu, \lambda, \omega, A) $ is bounded.} 
        \nonumber
    \end{align}

    Now, our task is reduced to verify the boundedness of linear operator $ \mathcal{P} = $ \linebreak $ \mathcal{P}(a, b, \mu, \lambda, \omega, A) : [H]^2 \times \mathscr{Y} \longrightarrow (\mathscr{Z}, \|\cdot\|) $. To this end, we apply Main Theorem \hyperlink{mainThrm01}{1} (\hyperlink{I-B}{I-B}) to the case when:
    \begin{equation*}
        [a^\ell, b^\ell, \mu^\ell, \lambda^\ell, \omega^\ell, A^\ell] = [a, b, \mu, \lambda, \omega, A], ~ \ell = 1, 2,
    \end{equation*}
    \begin{equation*}
        [p^1, z^1] = [p, z], ~ [p^2, z^2] = [0, 0],
    \end{equation*}
    \begin{equation*}
        \mbox{$ [h^1, k^1] $ is taken as an arbitrary $ [h, k] \in \mathscr{Y} $, and } [h^2, k^2] = [0, 0].
    \end{equation*}
    Then, we have:
    \begin{align*}
        \min \{ 1, & \nu, \delta_*(a) \} \bigl( \bigl| [p, z] \bigr|_{C([0, T]; H)^2}^2 +\bigl| [p, z] \bigr|_{\mathscr{V} \times \mathscr{V}_0}^2 \bigr)
        \nonumber
        \\
        \leq & 4 e^{3C_0^* T} \max \{ 1, |a|_{C(\overline{Q})} \} \bigl| [p_0, z_0] \bigr|_{[H]^2}^2 +8 C_0^* e^{3 C_0^* T} \bigl| [h, k] \bigr|_{\mathscr{Y}}^2,
    \end{align*}
    so that
    \begin{align}\label{concl1-10}
        \bigl| [p, z] \bigr|_{C([0, T]; H)^2}^2 & +\bigl| [p, z] \bigr|_{\mathscr{V} \times \mathscr{V}_0}^2 
        \leq (C_0^*)^2 e^{3 C_0^* T} \bigl( \bigl| [p_0, z_0] \bigr|_{[H]^2}^2 +\bigl| [h, k] \bigr|_{\mathscr{Y}}^2 \bigr),
    \end{align}
    where $ C_0^* = C_0^*(a, b, \lambda, \omega) $ is the constant as in \eqref{mThIIB-02-01}, i.e. the constant $ C^* $ as in \eqref{defOf_C*} corresponding to the case $ [a^1, b^1, \lambda^1, \omega^1] = [a, b, \lambda, \omega] $. Also, from the estimate \eqref{estFinal01} as in Remark \ref{Rem.4concl1}, and \eqref{concl1-10}, one can see that:
    \begin{equation}\label{concl1-11}
        \begin{cases}
            \bigl| [p, z] \bigr|_{\mathscr{Y}} \leq \sqrt{T} (C_{H}^{V^*} +C_{H}^{V_0^*})  \bigl| [p, z] \bigr|_{C([0, T]; H)^2}
            \\[1ex]
            \hspace{8.25ex} \leq (1 +T) (1 +C_{H}^{V^*} +C_{H}^{V_0^*}) \cdot C_0^* e^{\frac{3 C_0^* T}{2}} \bigl| \bigl[ [p_0, z_0], [h, k] \bigr] \bigr|_{[H]^2 \times \mathscr{Y}},
            \\[2ex]
            \bigl| [\partial_t p, \partial_t z] \bigr|_{\mathscr{Y}} \leq M_1 \bigl( \bigl| [h, k] \bigr|_{\mathscr{Y}} +\bigl| [p, z] \bigr|_{\mathscr{V} \times \mathscr{V}_0} \bigr) 
            \\[1ex]
            \hspace{12.225ex} \leq M_1 \bigl( 1 +C_0^* e^{\frac{3 C_0^* T}{2}} \bigr) \bigl| \bigl[ [p_0, z_0], [h, k] \bigr] \bigr|_{[H]^2 \times \mathscr{Y}},
        \end{cases}
    \end{equation}
    with use of the constants $ C_H^{V^*} > 0 $ and $ C_{H}^{V_0^*} > 0 $ of respective embeddings $ H \subset V^* $ and $ H \subset V_0^* $, and the constant $ M_1 $ given in \eqref{defOf_M1}. The above \eqref{concl1-10} and \eqref{concl1-11} imply that $ \mathcal{P} = \mathcal{P}(a, b, \mu, \lambda, \omega, A) \in \mathscr{L}\bigl( [H]^2 \times \mathscr{Y}; (\mathscr{Z}, \|\cdot\|) \bigr) $, and 
        \begin{align}\label{estFinal11}
            \bigl| \mathcal{P}(a, b, \mu, & \lambda, \omega, A) \bigr|_{\mbox{\scriptsize$ \mathscr{L}\bigl( [H]^2 \times \mathscr{Y};(\mathscr{Z}, \|\cdot\|) \bigr) $}}
            \nonumber
            \\
            \leq &  4 M_1(1 +T) (1 +C_{H}^{V^*} +C_{H}^{V_0^*}) \bigl( 1 +C_0^* e^{\frac{3}{2} C_0^* T} \bigr) =: M_1^*.
        \end{align}
        \eqref{estFinal10} and \eqref{estFinal11} are sufficient to verify \eqref{estFinal100}.
    \medskip
    
    Next, we show the item (\hyperlink{C1}{C1}). Since (\hyperlink{P}{P}) is a linear system, the decomposition \eqref{concl1-01} is obvious. Meanwhile, the linearity of (\hyperlink{P}{P}) and the identification $ [p_0, z_0] \in V \times V_0 \subset \mathscr{Z} $ allow us to verify \eqref{concl1-02} as follows:
    \begin{align*}
        \mathcal{P}\bigl( [H]^2 \times \mathscr{Y} \bigr) = & \bigcup_{[p_0, z_0] \in V \times V_0} \left( \mathcal{P} \bigl( \mathcal{Q}[p_0, z_0] \bigr) +\mathcal{P}\bigl(  \{ [0, 0] \} \times \bigl[ -\mathcal{T}[p_0, p_0, z_0, z_0] +\mathscr{Y} \bigr] \bigr) \right)
        \\
        = & \bigcup_{[p_0, z_0] \in V \times V_0} \bigl( [p_0, z_0] +\mathcal{P} \bigl( \{ [0, 0] \} \times \mathscr{Y} \bigr) \bigr) = [V \times V_0] +\mathcal{P} \bigl( \{ [0, 0] \} \times \mathscr{Y} \bigr).
    \end{align*}

    Thus, the proof is complete.
 \end{proof}

\begin{corollary}\label{Cor.concl2}
    Let us assume $ [a, b, \mu, \lambda, \omega, A] \in \mathscr{S}_0 $, $ \{ [a^n, b^n, \mu^n, \lambda^n, \omega^n, A^n] \}_{n = 1}^\infty \subset \mathscr{S}_0 $,
    \begin{subequations}\label{concl2-20}
        \begin{align}
            [a^n, & \, \partial_t a^n, \nabla a^n, b^n, \lambda^n, \omega^n, A^n] \to [a, \partial_t a, \nabla a, b, \lambda, \omega, A] \mbox{ weakly-$*$ in}
            \nonumber
            \\
            & L^\infty(Q) \times L^\infty(Q) \times L^\infty(Q)^N \times L^\infty(Q) \times L^\infty(Q) \times L^\infty(Q)^N \times L^\infty(Q)^{N \times N},
            \label{concl2-20_a}
            \\
            & \mbox{and in the pointwise sense a.e. in $ Q $,}
            \nonumber
        \end{align}
        and
        \begin{equation}\label{concl2-20_b}
            \begin{cases}
                \mu^n \to \mu \mbox{ weakly-$*$ in $ L^\infty(0, T; H) $,}
                \\
                \mu^n(t) \to \mu(t) \mbox{ in $ H $, for a.e. $ t \in (0, T) $,}
            \end{cases} \mbox{as $ n \to \infty $.}
        \end{equation}
    \end{subequations}
    Also, let us assume $ [p_0, z_0] \in [H]^2 $, $ \{ [p_0^n, z_0^n] \}_{n = 1}^\infty \subset [H]^2 $, $ [h, k] \in \mathscr{Y} $, $ \{ [h^n, k^n] \}_{n = 1}^\infty \subset \mathscr{Y} $, $ [p, z] \in \mathscr{Z} $, and $ \{[p^n, z^n]\}_{n = 1}^\infty \subset \mathscr{Z} $. In addition, we suppose the following relationship: 
    \begin{equation*}
        \begin{cases}
            [p, z] = \mathcal{P}(a, b, \mu, \lambda, \omega, A) \bigl[ [p_0, z_0], [h, k] \bigr] \mbox{ in } \mathscr{Z},
            \\
            [p^n, z^n] = \mathcal{P}(a^n, b^n, \mu^n, \lambda^n, \omega^n, A^n) \bigl[ [p_0^n, z_0^n], [h^n, k^n] \bigr] \mbox{ in } \mathscr{Z}, \mbox{ for $ n \in \bbN $,}
        \end{cases}
    \end{equation*}
    or equivalently,
    \begin{equation*}
        \begin{cases}
            \bigl[ [p_0, z_0], [h, k] \bigr] = \mathcal{Q}(a, b, \mu, \lambda, \omega, A) [p, z] \mbox{ in } [H]^2 \times \mathscr{Y},
            \\
            \bigl[ [p_0^n, z_0^n], [h^n, k^n] \bigr] = \mathcal{Q}(a^n, b^n, \mu^n, \lambda^n, \omega^n, A^n)[p^n, z^n] \mbox{ in } [H]^2 \times \mathscr{Y}, \mbox{ for $ n \in \bbN $.}
        \end{cases}
    \end{equation*}
    Then, the following two items hold. 
    \begin{description}
        \item[\textmd{\hypertarget{C2}{\textit{(C2)}}}]The convergence:
                \begin{subequations}\label{concl2-01}
                    \begin{equation}\label{concl2-01_a}
                        \bigl[ [p_0^n, z_0^n], [h^n, k^n] \bigr] \to \bigl[ [p_0, z_0], [h, k] \bigr] \mbox{ in $ [H]^2 \times \mathscr{Y} $, as $ n \to \infty $,}
                    \end{equation}
                implies the convergence:
                    \begin{equation}\label{concl2-01_b}
                        [p^n, z^n] \to [p, z] \mbox{ in $ C([0, T]; H)^2 $, and in $ \mathscr{V} \times \mathscr{V}_0 $, as $ n \to \infty $.}
                    \end{equation}
                    \vspace{-2ex}
                \end{subequations}
            \item[\textmd{\hypertarget{C3}{\textit{(C3)}}}]The following two convergences:
                \begin{subequations}\label{concl2-02}
                    \begin{equation}\label{concl2-02_a}
                        \bigl[ [p_0^n, z_0^n], [h^n, k^n] \bigr] \to \bigl[ [p_0, z_0], [h, k] \bigr] \mbox{ weakly in $ [H]^2 \times \mathscr{Y} $, as $ n \to \infty $,}
                    \end{equation}
                    and
                    \begin{equation}\label{concl2-02_b}
                        [p^n, z^n] \to [p, z] \mbox{ weakly in the Hilbert space $ \mathscr{Z} $, as $ n \to \infty $,}
                    \end{equation}
                \end{subequations}
            are equivalent to each other.
    \end{description}
\end{corollary}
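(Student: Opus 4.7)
The plan is to reduce (\hyperlink{C2}{C2}) to the continuous dependence inequality \eqref{contiDep} of Main Theorem \ref{mainThrm1} (\hyperlink{I-B}{I-B}), and to reduce (\hyperlink{C3}{C3}) to Key-Lemma \ref{axLem02} combined with a compactness argument and the uniqueness part of Main Theorem \ref{mainThrm1} (\hyperlink{I-A}{I-A}).

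First I would extract uniform quantitative consequences of \eqref{concl2-20}. Weak-$*$ convergence in the various $L^\infty$ spaces gives sequential boundedness of $\{ a^n \}$ in $W^{1, \infty}(Q)$ and of $\{ [b^n, \lambda^n, \omega^n, A^n] \}$ in the corresponding $L^\infty$ spaces; since $\overline{Q}$ is compact and $\{ a^n \}$ is equi-Lipschitz, the Arzel\`{a}--Ascoli theorem together with the pointwise a.e.\ convergence upgrades $a^n \to a$ to uniform convergence in $C(\overline{Q})$. Since $a \geq \delta_*(a) > 0$, this yields $a^n \geq \frac{1}{2}\delta_*(a)$ for all large $n$, and hence the constants $M_0^*, M_1^*$ appearing in \eqref{estFinal100} of Corollary \ref{Cor.concl1} (\hyperlink{C0}{C0}), evaluated at $(a^n, b^n, \mu^n, \lambda^n, \omega^n, A^n)$, remain uniformly bounded in $n$. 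Moreover, dominated convergence in $t$, together with the pointwise $H$-convergence \eqref{concl2-20_b} and the uniform bound on $|\mu^n|_{L^\infty(0, T; H)}$, gives $\mu^n \to \mu$ in $\scrH$; so the hypotheses \eqref{axLem02-02} of Key-Lemma \ref{axLem02} are fully met. Finally, Aubin's lemma yields that $\mathscr{Z}$ is compactly embedded in $[\scrH]^2$, a fact I will use in (\hyperlink{C3}{C3}).

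For (\hyperlink{C2}{C2}), I would apply Main Theorem \ref{mainThrm1} (\hyperlink{I-B}{I-B}) with $(a^1, b^1, \mu^1, \lambda^1, \omega^1, A^1) := (a, b, \mu, \lambda, \omega, A)$ (so that $C^*$ is independent of $n$) and $(a^2, \ldots, A^2) := (a^n, \ldots, A^n)$, together with $[p^1, z^1] := [p, z]$, $[p^2, z^2] := [p^n, z^n]$, $[h^1, k^1] := [h, k]$, $[h^2, k^2] := [h^n, k^n]$, $[p_0^1, z_0^1] := [p_0, z_0]$, $[p_0^2, z_0^2] := [p_0^n, z_0^n]$. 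The initial and forcing contributions to \eqref{contiDep} tend to zero by \eqref{concl2-01_a}, while $\int_0^T R^*(t) \, dt$ built from \eqref{defOf_R*} vanishes by dominated convergence: factors such as $|p^n(t)|_V^2$, $|z^n(t)|_{V_0}^2$, $|\partial_t z^n(t)|_{V_0^*}^2$ are uniformly bounded in $L^1(0, T)$ thanks to Corollary \ref{Cor.concl1} (\hyperlink{C0}{C0}), while the coefficient differences converge pointwise a.e.\ and are uniformly dominated in the ambient $L^\infty$ spaces (for the $a$-term one uses the uniform convergence $a^n \to a$ in $C(\overline{Q})$). This produces \eqref{concl2-01_b} at once.

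For (\hyperlink{C3}{C3}), both implications share one skeleton. Assuming \eqref{concl2-02_a}, Banach--Steinhaus and Corollary \ref{Cor.concl1} (\hyperlink{C0}{C0}) give $\sup_n \| [p^n, z^n] \|_{\mathscr{Z}} < \infty$; any subsequence contains a sub-subsequence $[p^{n_k}, z^{n_k}] \to [\tilde{p}, \tilde{z}]$ weakly in $\mathscr{Z}$ and, by Aubin compactness, strongly in $[\scrH]^2$. Weak continuity of the evaluation $\mathscr{Z} \ni [p, z] \mapsto [p(0), z(0)] \in [H]^2$ gives $[\tilde{p}(0), \tilde{z}(0)] = [p_0, z_0]$, and Key-Lemma \ref{axLem02} applied to $[p^{n_k}, p^{n_k}, z^{n_k}, z^{n_k}]$ identifies $\mathcal{T}[\tilde{p}, \tilde{p}, \tilde{z}, \tilde{z}] = [h, k]$. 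Hence $[\tilde{p}, \tilde{z}]$ solves (\hyperlink{P}{P}) with the limit data, and uniqueness from Main Theorem \ref{mainThrm1} (\hyperlink{I-A}{I-A}) forces $[\tilde{p}, \tilde{z}] = [p, z]$; a subsequence-uniqueness argument then promotes the full sequence, producing \eqref{concl2-02_b}. Conversely, assuming \eqref{concl2-02_b}, boundedness of the evaluation map furnishes the weak convergence of the initial pairs, while the uniform operator-norm estimate \eqref{estFinal00} for $\mathcal{T}^n$ together with weak boundedness of $\{ [p^n, z^n] \}_{n = 1}^\infty$ in $\mathscr{Z}$ renders $\{ [h^n, k^n] \}_{n = 1}^\infty$ bounded in $\mathscr{Y}$; any weak cluster point is identified with $[h, k]$ by Key-Lemma \ref{axLem02}, so the full sequence converges weakly to $[h, k]$, yielding \eqref{concl2-02_a}.

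The main technical obstacle is the control of the $a$-difference contribution to $R^*$ in (\hyperlink{C2}{C2}): the weak-$*$ hypothesis alone furnishes neither $|a - a^n|_{C(\overline{Q})} \to 0$ nor a uniform lower bound on $\delta_*(a^n)$. Upgrading the pointwise a.e.\ convergence of $a^n$ to uniform convergence, via the equi-Lipschitz estimate extracted from the weak-$*$ control of $\partial_t a^n$ and $\nabla a^n$, is therefore the pivot on which both the Corollary \ref{Cor.concl1} (\hyperlink{C0}{C0}) stability and the $R^*$-vanishing step rest.
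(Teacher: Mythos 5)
Your overall architecture matches the paper's: (\hyperlink{C2}{C2}) via Main Theorem \hyperlink{mainThrm01}{1} (\hyperlink{I-B}{I-B}) after upgrading $a^n \to a$ to uniform convergence in $C(\overline{Q})$, and (\hyperlink{C3}{C3}) via uniform a priori bounds, compactness, Key-Lemma \ref{axLem02}, and uniqueness; the (\hyperlink{C3}{C3}) part is essentially the paper's argument and is sound.

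There is, however, a genuine gap in your treatment of (\hyperlink{C2}{C2}): you assign the indices the wrong way around. You put the limit data in the ``$1$'' slot (to keep $C^*$ fixed) and $[a^n, \dots, A^n]$, $[p^n, z^n]$ in the ``$2$'' slot. But $R^*$ in \eqref{defOf_R*} is built from the ``$2$''-indexed solution, so with your choice it reads, e.g., $\int_0^T |p^n(t)|_V^2\,|(\mu-\mu^n)(t)|_H^2\,dt$ and $\int_0^T |\partial_t z^n(t)|_{V_0^*}^2\,|\nabla(a-a^n)(t)|_{L^4(\Omega)^N}^2\,dt$. Here the weights $|p^n|_V^2$, $|z^n|_{V_0}^2$, $|\partial_t z^n|_{V_0^*}^2$ vary with $n$ and are only uniformly bounded in $L^1(0,T)$; the coefficient differences converge a.e.\ and are bounded in $L^\infty$, but do not converge in any norm. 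Dominated convergence is then inapplicable: there is no fixed dominating function and the integrand need not converge pointwise, and uniform $L^1$-boundedness of a moving weight against an a.e.-vanishing bounded factor does not force the integrals to vanish (the weights could concentrate where the coefficient differences remain large). Only the single term carrying $|a-a^n|_{C(\overline{Q})}^2$ survives your argument. The repair is exactly the paper's index assignment: take $[a^1,\dots,A^1]=[a^n,\dots,A^n]$ and $[a^2,\dots,A^2]=[a,\dots,A]$ with $[p^2,z^2]=[p,z]$, so that $R^*$ involves the \emph{fixed} functions $|p|_V^2$, $|z|_{V_0}^2$, $|\partial_t z|_{V_0^*}^2$ multiplied by coefficient differences that converge a.e.\ and are uniformly dominated; dominated convergence then applies cleanly. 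The price is that $C^*$ now depends on $n$, which you control by the uniform bound $\delta_*(a^n)\geq \delta_*(a)/2$ and $|a^n|_{C(\overline{Q})}\leq 2|a|_{C(\overline{Q})}$ for large $n$ (your Arzel\`a--Ascoli step), together with the weak-$*$ boundedness of the remaining coefficients --- precisely the constant $C_0^{**}$ of \eqref{concl2-10-01}.
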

\begin{proof}
    We begin by referring to the Rellich--Kondrachov theorem (cf. \cite[Theorem 6.3]{MR2424078}), and confirm that the assumption \eqref{concl2-20_a} implies the uniform convergence of $ \{a^n\}_{n = 1}^\infty $, i.e.:
        \begin{equation}\label{concl2-21}
            a^n \to a \mbox{ in $ C(\overline{Q}) $ as $ n \to \infty $.}
        \end{equation}
        So, we can find a large number $ N_a^* \in \bbN $ such that:
    \begin{equation}\label{Na*}
        0 < \frac{\delta_*(a)}{2} \leq \inf_{n \geq N_a^*} \delta_*(a^n) \leq \sup_{n \geq N_a^*} |a^n|_{C(\overline{Q})} \leq 2 |a|_{C(\overline{Q})}. 
    \end{equation}

    Now, we show the item (\hyperlink{C2}{C2}). Let us assume the convergence \eqref{concl2-01_a}, and let us apply Main Theorem \hyperlink{mainThrm01}{1} (\hyperlink{I-B}{I-B}) under the replacements of:
    \begin{equation}\label{replace01}
        \begin{cases}
            [a^1, b^1, \mu^1, \lambda^1, \omega^1, A^1] \mbox{ by } [a^n, b^n, \mu^n, \lambda^n, \omega^n, A^n], ~ [p_0^1, z_0^1] \mbox{ by }  [p_0^n, z_0^n], 
            \\
            [h^1, k^1] \mbox{ by } [h^n, k^n], \mbox{ and } [p^1, z^1] \mbox{ by } [p^n, z^n], \mbox{ for $ n \geq N_a^* $,}
        \end{cases}
    \end{equation}
and
    \begin{equation*}
        \begin{cases}
            [a^2, b^2, \mu^2, \lambda^2, \omega^2, A^2] \mbox{ by } [a, b, \mu, \lambda, \omega, A], ~ [p_0^2, z_0^2] \mbox{ by }  [p_0, z_0], 
            \\
            [h^2, k^2] \mbox{ by } [h, k], \mbox{ and } [p^2, z^2] \mbox{ by } [p, z].
        \end{cases}
    \end{equation*}
    Then, we have:
    \begin{align}\label{concl2-10}
        \bigl( |p^n - & p|_{C([0, T]; H)}^2 +|\sqrt{a^n}(z^n -z)|_{C([0, T]; H)}^2 \bigr) +\bigl( |p^n -p|_{\mathscr{V}}^2 +\nu |z^n -z|_{\mathscr{V}_0}^2 \bigr) 
        \nonumber
        \\
        \leq & 2e^{3 C_0^{**} T} \bigl( |p_0^n -p_0|_H^2 +|\sqrt{a^n(0)} (z_0^n -z)|_H^2 \bigr)
        \nonumber
        \\
        & + 4 C_0^{**} e^{3 C_0^{**} T} \bigl( |[h^n -h, k^n -k]|_\mathscr{Y}^2 +|R_0^n|_{L^1(0, T)} \bigr), \mbox{ for all $ n \geq N_a^* $,} 
    \end{align}
    where 
    \begin{align}\label{concl2-10-01}
        C_0^{**} := & \frac{18(1 +\nu)}{\min \{ 1, \nu, \delta_*(a) \}} \bigl( 1 +(C_V^{L^4})^2 +(C_V^{L^4})^4 +(C_{V_0}^{L^4})^2 \bigr) \cdot
        \nonumber
        \\
        & \cdot \sup_{n \geq N_a^*} \, \bigl\{ 1 +|a^n|_{W^{1, \infty}(Q)} +|b^n|_{L^\infty(Q)} +|\lambda^n|_{L^\infty(Q)} +|\omega^n|_{L^\infty(Q)^N} \bigr\},
    \end{align}
    and
    \begin{align}\label{concl2-11}
        R_0^n(t) := & |\partial_t z(t)|_{V_0^*}^2 \bigl( |a^n -a|_{C(\overline{Q})}^2 +|\nabla (a^n -a)(t)|_{L^4(\Omega)^N}^2 \bigr)
        \nonumber
        \\
        & +|z(t)|_{V_0}^2 |(b^n -b)(t)|_{L^4(\Omega)}^2 +|p(t)(\lambda^n -\lambda)(t)|_H^2
        \nonumber
        \\
        & +|p(t)|_V^2 \bigl( |(\mu^n -\mu)(t)|_H^2 +|(\omega^n -\omega)(t)|_{L^4(\Omega)^N}^2 \bigr)
        \nonumber
        \\
        & +|\nabla z(t) \cdot (\omega^n -\omega)(t)|_H^2 +|(A^n -A)(t) \nabla z(t)|_{[H]^N}^2, 
        \\
        & \qquad \mbox{for a.e. $ t \in (0, T) $, and all $ n \geq N_a^* $.}
        \nonumber
    \end{align}
    Notice that the assumption \eqref{concl2-20}, and the estimate \eqref{Na*} let the above $ C_0^{**} $ be a finite constant, and let it be an upper bound for the sequence of constants given in \eqref{defOf_C*}, under \eqref{Na*} and \eqref{replace01}. On this basis, we can verify the convergence \eqref{concl2-01_b}, by letting $ n \to \infty $ for \eqref{concl2-10} and \eqref{concl2-11}, with use of \eqref{concl2-20}, \eqref{concl2-21}, \eqref{Na*}, and the dominated convergence theorem (cf. \cite[Theorem 10]{MR0492147}). 
    \medskip

    Next, we show the item (\hyperlink{C3}{C3}). If the convergence \eqref{concl2-02_a} holds, then thanks to \eqref{concl2-20}, \eqref{Na*}--\eqref{concl2-11}, we can say that 
        \begin{equation}\label{bdd_a}
            \{ [p^n, z^n] \}_{n = N_a^*}^\infty \mbox{ is bounded in $ C([0, T]; H)^2 $, and in $ \mathscr{V} \times \mathscr{V}_0 $.} 
        \end{equation}
        Also, the assumption \eqref{concl2-20}, and the estimate \eqref{Na*} lead to the boundedness of a sequence \linebreak $ \{ M_1(a^n, b^n, \mu^n, \lambda^n, \omega^n, A^n) \}_{n = N_a^*}^\infty $, which consists of the constants $ M_1 = M_1(a, b, \mu, \lambda, \omega, A) $ given in \eqref{defOf_M1}, in cases when:
        \begin{equation}\label{cases}
            [a, b, \mu, \lambda, \omega, A] = [a^n, b^n, \mu^n, \lambda^n, \omega^n, A^n], \mbox{ for $ n \geq N_a^* $.}
        \end{equation}
        Therefore, from \eqref{concl2-02_a}, \eqref{bdd_a}, and the estimate \eqref{estFinal01} as in Remark \ref{Rem.4concl1}, it is inferred that:
        \begin{equation}\label{bdd_b}
            \{ [\partial_t p^n, \partial_t z^n] \}_{n = N_a^*}^\infty \mbox{ is bounded in $ \mathscr{Y} $.}
        \end{equation}
        On account of \eqref{bdd_a} and \eqref{bdd_b}, we can ensure that:
        
        \begin{equation}\label{concl2-13}
            \begin{array}{c}
                [p^n, z^n] \to [\bar{p}, \bar{z}] \mbox{ in $ [\scrH]^2 $, in $ C([0, T]; V^*) \times C([0, T]; V_0^*) $,  weakly in $ \mathscr{Z} $,}
                \\[1ex]
                \mbox{and weakly-$*$ in $ L^\infty(0, T; H)^2 $, as $ n \to \infty $,}
            \end{array}
        \end{equation}
        and in particular, 
        \begin{align}
            [\bar{p}(0), & \bar{z}(0)] = \lim_{n \to \infty} [p^n(0), z^n(0)] = \lim_{n \to \infty} [p_0^n, z_0^n] = [p_0, z_0]
            \nonumber
            \\
            & \mbox{in the topology of $ V^* \times V_0^* $, and in the weak topology of $ [H]^2 $,}
            \label{concl2-15}
        \end{align}

        \noindent
        for some $ [\bar{p}, \bar{z}] \in \mathscr{Z} $, and some subsequence, by applying the weakly-$*$ compact embedding $ \mathscr{Z} \subset L^\infty(0, T; H)^2 $ as in Remark \ref{Rem3.concl1}, and the theories of compactness of Aubin's type \cite[Corollary 4]{MR0916688}, Arzer\`{a}--Ascoli \cite[Theorem 1.3.1]{MR1375237}, and Alaoglu--Bourbaki--Kakutani \cite[Theorem 1.2.5]{MR1375237}. Furthermore, under the replacements of:
        \begin{equation*}
            \{ [\tilde{p}^n, \tilde{z}^n] \}_{n = 1}^\infty \mbox{ by } \{ [p^n, z^n] \}_{n = N_a^*}^\infty, ~  [\tilde{p}, \tilde{z}] \mbox{ by } [\bar{p}, \bar{z}], \mbox{ and } [\tilde{h}, \tilde{k}] \mbox{ by } [h, k], 
        \end{equation*}
        we can derive the assumption \eqref{axLem02-02} of Key-Lemma \ref{axLem02} from \eqref{concl2-20}, \eqref{concl2-02_a}, \eqref{concl2-13}, and the dominated convergence theorem (cf. \cite[Theorem 10]{MR0492147}). So, applying Key-Lemma \ref{axLem02}, one can observe that: 
        \begin{equation}\label{concl2-14}
            \mathcal{T}(a, b, \mu, \lambda, \omega, A)[\bar{p}, \bar{p}, \bar{z}, \bar{z}] = [h, k] \mbox{ in $ \mathscr{Y} $.}
        \end{equation}
        Taking into account \eqref{concl2-15}, \eqref{concl2-14}, and the uniqueness result of Main Theorem \hyperlink{mainThrm01}{1} \linebreak (\hyperlink{I-A}{I-A}), we deduce that $ [\bar{p}, \bar{z}] = [p ,z] $, and the convergence \eqref{concl2-13} holds without necessity of subsequence. This implies the convergence \eqref{concl2-02_b}.
        \medskip

        Conversely, if the convergence \eqref{concl2-02_b} holds, then from the weakly-$*$ compact embedding $ \mathscr{Z} \subset L^\infty(0, T; H)^2 $ as in Remark \ref{Rem3.concl1}, and the compactness theories of Aubin's type \cite[Corollary 4]{MR0916688}, and Arzer\`{a}--Ascoli \cite[Theorem 1.3.1]{MR1375237}, we can see that:
            \begin{align}\label{Oh01}
                [p^n, z^n] \to [p, z] & \mbox{ in $ [\scrH]^2 $, in $ C([0, T]; V^*) \times C([0, T]; V_0^*) $,}
                \nonumber
                \\
                \mbox{and } & \mbox{weakly-$ * $ in $ L^\infty(0, T; H)^2 $, as $ n \to \infty $,}
            \end{align}
            and in particular,
            \begin{align}\label{concl2-40}
                [p_0^n, z_0^n] := & [p^n(0), z^n(0)] \to [p(0), z(0)] =: [p_0, z_0] 
                \nonumber
                \\
                & \mbox{in $ V^* \times V_0^* $, and weakly in $ [H]^2 $, as $ n \to \infty $,}
            \end{align}
            by taking a subsequence. Also, the assumption \eqref{concl2-20} and the condition \eqref{Na*} lead to the boundedness of a sequence $ \{ M_0(a^n, b^n, \mu^n, \lambda^n, \omega^n, A^n) \}_{n = N_a^*}^\infty $ which consists of the constants $ M_0 = M_0(a, b, \mu, \lambda, \omega, A) $ given in \eqref{defOf_M0}, in the cases as in \eqref{cases}. So, from the estimate \eqref{estFinal00} as in Remark \ref{Rem.4concl1}, it is observed that the sequence $ \{ [h^n, k^n] \}_{n = N_a^*}^\infty := \{ \mathcal{T}(a^n, b^n, \mu^n, \lambda^n, \omega^n, A^n)[p^n, p^n, z^n, z^n] \}_{n = N_a^*}^\infty $ is bounded in  $ \mathscr{Y} $, and by taking more one subsequence if necessary, 
            \begin{align}\label{concl2-41}
                [h^n, k^n] \to & [\bar{h}, \bar{k}] \mbox{ weakly in $ \mathscr{Y} $ as $ n \to \infty $, for some $ [\bar{h}, \bar{k}] \in \mathscr{Y} $.}
            \end{align}
            Additionally, \eqref{concl2-20}, \eqref{concl2-02_b}, \eqref{Oh01}, and \eqref{concl2-41} allows us to apply Key-Lemma \ref{axLem02} under the replacements of:
            \begin{equation*}
                [\tilde{p}, \tilde{z}] \mbox{ by } [p, z], ~ \{ [\tilde{p}^n, \tilde{z}^n] \}_{n = 1}^\infty \mbox{ by } \{ [p^n, z^n] \}_{n = N_a^*}^\infty, \mbox{ and } [\tilde{h}, \tilde{k}] \mbox{ by } [\bar{h}, \bar{k}],
            \end{equation*}
            and deduce that:
            \begin{equation}\label{concl2-42}
                [\bar{h}, \bar{k}] = [h, k] := \mathcal{T}(a, b, \mu, \lambda, \omega, A)[p, p, z, z] \mbox{ in $ \mathscr{Y} $.}
            \end{equation}
            By virtue of \eqref{concl2-40}--\eqref{concl2-42}, we can conclude the convergence \eqref{concl2-02_a}, because \eqref{concl2-40}, \eqref{concl2-42}, and the uniqueness result in Main Theorem \hyperlink{mainThrm01}{1} (\hyperlink{I-A}{I-A}) guarantee the unnecessity of subsequence in \eqref{concl2-40} and \eqref{concl2-41}. 
        \medskip

        Thus, the proof of this Corollary is complete. 
\end{proof}

\begin{remark}\label{Rem.add01}
    By Corollary \ref{Cor.concl2}, we can say that \eqref{concl2-20} is a sufficient assumption in the application to the optimal control problem (\hyperlink{OCP}{OCP}) mentioned in Introduction. However, this assumption  is not sufficient to make the convergences \eqref{concl2-01_a} and \eqref{concl2-01_b} to be equivalent to each other. 
\end{remark}

\begin{crem}\label{Rem.concl}
    The first Corollary \ref{Cor.concl1} suggests us that the linear operator \linebreak $ \mathcal{P} \bigl[ [0, 0], \cdot \, \bigr] = \mathcal{P}(a, b, \mu, \lambda, \omega, A) \bigl[ [0, 0], \cdot \, \bigr] : \mathscr{Y} \longrightarrow \mathscr{Z} $ takes over one of  essential roles in the mathematical analysis of our system (\hyperlink{P}{P}), and especially, all cases of $ [p_0, z_0] \in V \times V_0 $ are reduced to the case of $ [p_0, z_0] = [0, 0] $. Additionally, we note that the reduced initial condition ``$ [p_0, z_0] = [0, 0] $'' is a natural setting in the application to the problems $ \hyperlink{sharp1}{\sharp\,1}) $ and $ \hyperlink{sharp2}{\sharp\,2}) $, under the corresponding relations \eqref{relat_P-S01} and \eqref{relat_P-S02}.
\medskip

Finally, as a result of Main Theorem \hyperlink{mainThrm01}{1} (\hyperlink{I-B}{I-B}) and Key-Lemma \ref{axLem01}, we can conclude that our numerical scheme for the system (\hyperlink{P}{P}) is designed as the minimization algorithm for a single governing energy $ \mathcal{E} $ given  in \eqref{enrgyE}.  
\end{crem}

\label{page:e}

\begin{thebibliography}{10}

\bibitem{MR2424078}
Adams, R.~A.; Fournier, J.~J.~F.
\newblock {\em Sobolev spaces}, Vol. 140 of {\em Pure and Applied Mathematics
  (Amsterdam)}.
\newblock Elsevier/Academic Press, Amsterdam, second edition, 2003.

\bibitem{MR2192832}
Attouch, H.; Buttazzo, G.; Michaille, G.
\newblock {\em Variational Analysis in {S}obolev and {BV} spaces}, Vol.~6 of
  {\em MPS/SIAM Series on Optimization}.
\newblock Society for Industrial and Applied Mathematics (SIAM), Philadelphia,
  PA; Mathematical Programming Society (MPS), Philadelphia, PA, 2006.
\newblock Applications to PDEs and optimization.

\bibitem{MR0348562}
Br\'ezis, H.
\newblock {\em Op\'erateurs Maximaux Monotones et Semi-groupes de Contractions
  dans les Espaces de {H}ilbert}.
\newblock North-Holland Publishing Co., Amsterdam-London; American Elsevier
  Publishing Co., Inc., New York, 1973.
\newblock North-Holland Mathematics Studies, No. 5. Notas de Matem\'atica (50).

\bibitem{emmrich1999discrete}
Emmrich, E.
\newblock Discrete versions of gronwall's lemma and their application to the
  numerical analysis of parabolic problems.
\newblock Technical Report 637, Institute of Mathematics, Technische
  Universit\"{a}t Berlin,
  ``\href{http://www3.math.tu-berlin.de/preprints/files/Preprint-637-1999.pdf}{http://www3.math.tu-berlin.de/preprints/files/Preprint-637-1999.pdf}'',
  1999.

\bibitem{MR2469586}
Ito, A.; Kenmochi, N.; Yamazaki, N.
\newblock A phase-field model of grain boundary motion.
\newblock {\em Appl. Math.}, {\bfseries 53}(5): 433--454, 2008.

\bibitem{MR2548486}
Ito, A.; Kenmochi, N.; Yamazaki, N.
\newblock Weak solutions of grain boundary motion model with singularity.
\newblock {\em Rend. Mat. Appl. (7)}, {\bfseries 29}(1): 51--63, 2009.

\bibitem{MR2836555}
Ito, A.; Kenmochi, N.; Yamazaki, N.
\newblock Global solvability of a model for grain boundary motion with
  constraint.
\newblock {\em Discrete Contin. Dyn. Syst. Ser. S}, {\bfseries 5}(1): 127--146,
  2012.

\bibitem{MR2668289}
Kenmochi, N.; Yamazaki, N.
\newblock Large-time behavior of solutions to a phase-field model of grain
  boundary motion with constraint.
\newblock In {\em Current advances in nonlinear analysis and related topics},
  Vol.~32 of {\em GAKUTO Internat. Ser. Math. Sci. Appl.}, pp.  389--403.
  Gakk\=otosho, Tokyo, 2010.

\bibitem{MR1752970}
Kobayashi, R.; Warren, J.~A.; Carter, W.~C.
\newblock A continuum model of grain boundaries.
\newblock {\em Phys. D}, {\bfseries 140}(1-2): 141--150, 2000.

\bibitem{MR1794359}
Kobayashi, R.; Warren, J.~A.; Carter, W.~C.
\newblock Grain boundary model and singular diffusivity.
\newblock In {\em Free boundary problems: theory and applications, {II}
  ({C}hiba, 1999)}, Vol.~14 of {\em GAKUTO Internat. Ser. Math. Sci. Appl.},
  pp.  283--294. Gakk\=otosho, Tokyo, 2000.

\bibitem{MR0259693}
Lions, J.-L.
\newblock {\em Quelques m\'ethodes de r\'esolution des probl\`emes aux limites
  non lin\'eaires}.
\newblock Dunod; Gauthier-Villars, Paris, 1969.

\bibitem{MR0492147}
Mikusi\'nski, J.
\newblock {\em The {B}ochner integral}.
\newblock Birkh\"auser Verlag, Basel-Stuttgart, 1978.
\newblock Lehrb\"ucher und Monographien aus dem Gebiete der exakten
  Wissenschaften, Mathematische Reihe, Band 55.

\bibitem{MR3670006}
Moll, S.; Shirakawa, K.; Watanabe, H.
\newblock Energy dissipative solutions to the {K}obayashi--{W}arren--{C}arter
  system.
\newblock {\em Nonlinearity}, {\bfseries 30}(7): 2752--2784, 2017.

\bibitem{Nakayashiki18}
Nakayashiki, R.
\newblock Quasilinear type {K}obayashi--{W}arren--{C}arter systems including
  dynamic boundary conditions.
\newblock {\em Adv. Math. Sci. Appl. (to appear)}.

\bibitem{MR3038131}
Shirakawa, K.; Watanabe, H.; Yamazaki, N.
\newblock Solvability of one-dimensional phase field systems associated with
  grain boundary motion.
\newblock {\em Math. Ann.}, {\bfseries 356}(1): 301--330, 2013.

\bibitem{MR3082861}
Shirakawa, K.; Watanabe, H.
\newblock Energy-dissipative solution to a one-dimensional phase field model of
  grain boundary motion.
\newblock {\em Discrete Contin. Dyn. Syst. Ser. S}, {\bfseries 7}(1): 139--159,
  2014.

\bibitem{MR3362773}
Shirakawa, K.; Watanabe, H.; Yamazaki, N.
\newblock Phase-field systems for grain boundary motions under isothermal
  solidifications.
\newblock {\em Adv. Math. Sci. Appl.}, {\bfseries 24}(2): 353--400, 2014.

\bibitem{MR0916688}
Simon, J.
\newblock Compact sets in the space {$L^p(0,T;B)$}.
\newblock {\em Ann. Mat. Pura Appl. (4)}, {\bfseries 146}: 65--96, 1987.

\bibitem{MR1375237}
Vrabie, I.~I.
\newblock {\em Compactness methods for nonlinear evolutions}, Vol.~75 of {\em
  Pitman Monographs and Surveys in Pure and Applied Mathematics}.
\newblock Longman Scientific \& Technical, Harlow; copublished in the United
  States with John Wiley \& Sons, Inc., New York, second edition, 1995.
\newblock With a foreword by A. Pazy.

\bibitem{MR3203495}
Watanabe, H.; Shirakawa, K.
\newblock Qualitative properties of a one-dimensional phase-field system
  associated with grain boundary.
\newblock In {\em Nonlinear analysis in interdisciplinary
  sciences---modellings, theory and simulations}, Vol.~36 of {\em GAKUTO
  Internat. Ser. Math. Sci. Appl.}, pp.  301--328. Gakk\=otosho, Tokyo, 2013.

\end{thebibliography}
\end{document}